\newcommand{\E}{\mathbb{E}}
\renewcommand{\P}{\mathbb{P}}
\renewcommand{\S}{\mathbb{S}}
\newcommand{\R}{\mathbb{R}}
\newcommand{\Z}{\mathbb{Z}}
\newcommand{\C}{\mathbb{C}}
\newcommand{\Var}{\mbox{Var}}
\DeclareMathOperator{\argmin}{argmin\,}
\newtheorem{theorem}{Theorem}
\newtheorem{proposition}[theorem]{Proposition}
\newtheorem{corollary}[theorem]{Corollary}
\newtheorem{lemma}[theorem]{Lemma}
\newtheorem{remark}{Remark}
\newtheorem{assumption}{Assumption}
\author{
{Claire Lacour\footnote{claire.lacour@u-pem.fr, 
{LAMA, Univ Gustave Eiffel, Univ Paris Est Creteil, CNRS, F-77447 Marne-la-Vall\'ee, France}} }
and 
{Thanh Mai Pham Ngoc\footnote{thanh.pham\_ngoc@math.u-psud.fr,
{Universit\'e Paris-Saclay, CNRS, Laboratoire de math\'ematiques d'Orsay, 91405 Orsay, France}}
}}
\date{}
\begin{document}


\title{Semiparametric inference for mixtures of circular data
}
\maketitle


\begin{abstract}
We consider $X_1, \dots, X_n$ a sample of data on the circle $\mathbb{S}^1$, whose distribution is a two-component mixture. Denoting $R$ and $Q$ two rotations on $\mathbb{S}^1$, the density of the $X_i$'s is assumed to be
$g(x)=p f(R^{-1} x)+(1-p) f(Q^{-1} x)$, where $p\in (0,1)$ and $f$ is an unknown density on the circle. In this paper we estimate both the parametric part $\theta=(p,R,Q)$ and the nonparametric part $f$. The specific problems of identifiability on the circle are  studied. A consistent estimator of $\theta$ is introduced and its asymptotic normality is proved.  We propose a Fourier-based estimator of $f$ with a penalized criterion to choose the resolution level. 
We show that our adaptive estimator is optimal from the oracle and minimax points of view when the density belongs to a Sobolev ball.
Our method is illustrated by numerical simulations.
\end{abstract}


\section{Introduction}

Circular data are collected when the topic of interest is a  direction or  a time of day. These particular data appear in many 
applications: earth sciences (e.g. wind directions), medicine (e.g. circadian rhythm), ecology (e.g. animal movements), forensics (crime incidence).
Different surveys on statistical methods for circular data can be found: \cite{Mardia},
 \cite{JammaSengup}, \cite{LeyVerdebout} or more recently \cite{PewseyGarcia}. In the present work, we consider a mixture model with two components equal up to a rotation.  We observe $X_1, \dots, X_n$ a sample of data on $\S^1$ with probability distribution function:
\begin{equation}\label{model}
g(x)=p_0 f(R_0^{-1} x)+(1-p_0) f(Q_0^{-1} x)=p_0 f( x-\alpha_0)+(1-p_0) f(x-\beta_0).
\end{equation}
In the right hand side we have identified $f:\S^1\to \R$ and its periodized version on $\R$. 
Here $R_0$ and $Q_0$ are two unknown rotations of the circle. 
$R_0$ is a rotation with angle $\alpha_0$ and $Q_0$ is a rotation with angle $\beta_0$.  The aim is to estimate both $\theta_0=(p_0,\alpha_0,\beta_0)$ and the nonparametric part $f$.  

Bimodal circular data are commonly encountered in many scientific fields, for instance in climatology, animal orientations or in earth sciences.
For the analysis of wind directions, see  \cite{HernandezScarpa} and for  animal orientations, the dragonflies data set presented in \cite{Batschelet}. %
In geosciences, one can cite the 
  cross-bed orientations data set obtained in the middle Mississipian Salem Limestone of central Indiana and which was presented by the Seminar Sedimentation (\cite{Sedimentation}). Last but not least, the paper of \cite{Lark} analyzes some geological data sets  and clearly favours for some of them a two component mixture of von Mises distributions.

\medskip

Mixture models for describing multimodal circular data date back to \cite{Pearson} and have been largely used since then.
An important case in the literature is the mixture of two von Mises distributions which has been explored in numerous works. Let us cite among others papers by \cite{Bartels}, \cite{Spurr} or \cite{ChenLiFu}. From a practical point of view, algorithms have also been proposed to deal with mixture of two von Mises distributions, including maximum likelihood algorithms by \cite{JonesJames} or a characteristic function based procedure by \cite{SpurrKout}. Note that on the unit hypersphere, \cite{Banerjee} investigated clustering methods for  mixtures of von Mises Fisher distributions. In our framework, we shall not assume any parametric form of the density and hence the model is said to be semiparametric. {To the best of our knowledge, this is the first work devoted to the study of the semiparametric mixture model for circular data}.  {This semiparametric model is more complex and intricate than the usual parametric one encountered in the circular literature}. In the spherical case, \cite{KimKoo} studied the general mixture framework for a location parameter but assuming that the nonparametric part $f$ is known. On the real line, this semiparametric model has been studied by \cite{bordes2006}, \cite{Hunterwang07}, \cite{butucea2014} or \cite{GassiatRousseau} for dependent latent variables. For the multivariate case, see for instance  \cite{HallZhou}, \cite{HallNeeman}, \cite{GassiatRousseauVernet}, \cite{Hohmann}. When dealing with the specific case of one of the two components being parametric, one refers to work by \cite{MaYao} and references therein.  

Note that we can rewrite model (\ref{model}) as 
\begin{equation}\label{modelprime}
X_i = Y_i+\varepsilon_i \: \pmod{2\pi},\qquad i=1,\dots,n,
\end{equation}
where $Y_i$ has density $f$ and $\varepsilon_i$ is a Bernoulli angle, which is equal to $\alpha_0$ with probability $p_0$ and $\beta_0$ otherwise. Accordingly, model (\ref{model}) can be viewed as a circular convolution model with unknown noise operator $\varepsilon$. The circular convolution model has been studied by  \cite{Golden} in the case of known noise operator whereas \cite{JohannesSchwarz} dealt with unknown error distribution but have at their disposal an independent sample of the noise to estimate this latter. It is worth pointing out that 
\cite{Golden} and \cite{JohannesSchwarz} made the usual assumptions on the decay of the Fourier coefficients of the density of $\varepsilon$, whereas in model \eqref{model} the Fourier coefficients are not decreasing.

\medskip

Identifiability questions are at the heart of the theory of mixture models and the circular context is no exception. Thus, our first task is to study the identifiability of the model. From a mathematical point of view, the topology of the circle makes the problem very different from the linear case. In the circular parametric case, \cite{Fraser} obtained  identifiability results for the von Mises distributions, which were extended in \cite{Kent83} to generalized von Mises distributions while \cite{holzmann2004}) focused on wrapped distributions, basing their analysis on the Fourier coefficients. Here, the Fourier coefficients turn out to be very useful as well but the nonparametric paradigm makes the study quite different and intricate.  Our identifiability results are obtained under mild assumptions on the Fourier coefficients. We require that the coefficients are real which can be related to the usual symmetry assumption in mixture models (see for instance \cite{Hunterwang07}) and we impose that only the first 4 coefficients do not vanish. Interestingly enough, some not intuitive phenomena appear. {A striking case} occurs when the angles $\alpha_0$ and $\beta_0$ are distant from $2\pi/3$,  model \eqref{model} is then nonidentifiable which is quite surprising at first sight.

 Once the identifiability of the model is obtained, we resort to a contrast function in the line of \cite{butucea2014} to estimate the Euclidian parameter $\theta_0$. In that regard, we prove the consistency of our estimator and an asymptotic normality result.  Thereafter, for the estimation of the nonparametric part, a penalized empirical risk estimation method is used. The estimator of the density turns out to be adaptive (meaning that it does not require the specification of the unknown smoothness parameter), {a property which was not reached so far for this semiparametric model even in the linear case}. The procedure devised is hence relevant for practical purposes. We prove an oracle inequality and minimax rates are achieved by our estimator for Sobolev regularity classes. Eventually, a numerical section shows the good performances of the whole estimation procedure.

The paper is organized as follows. Section 2 is devoted to the identifiability of the model. Section 3 tackles the estimation of the parameter $\theta_0$ whereas Section 4 focuses on the estimation of the nonparametric part. Finally Section 5 presents numerical implementations of our procedure. Proofs are gathered in Section 6.

\section{Identifiability}

In this section, to keep the notation as light and clear as possible, we drop the subscript $0$ in the parameters. For any function $g$ and any angle $\alpha$, denote $g_\alpha(x):=g(x-\alpha)$. For any complex number $a$, $\overline a$ is the complex conjugate of $a$. For any integrable function $\phi:  \S^1\to \R$, we denote for any $l\in \Z$, 
$\phi^{\star l}=\int_{\S^1} \phi(x) e^{-i l x} \frac{dx}{2\pi}$, the Fourier coefficients.  {Note also that we use notation $f$ and $f'$ for two densities, where $f'$ is not the derivative of $f$.}

\medskip

Let us now study the identifiability of our model (\ref{model})
where the data have density $ p f( x-\alpha)+(1-p) f( x-\beta)$. First, it is obvious that if $p=0$, $\alpha$ is not identifiable, and  if $p=1$, $\beta$ is not identifiable. In the same way, $p$ is not identifiable if $\alpha=\beta$. 
Moreover, as explained in \cite{Hunterwang07} for a translation mixture on the real line,  the case $p=1/2$ has to be avoided. Indeed, denoting $g$ a  
density and for instance
$f=\frac12 g_1+\frac12 g_{-1}$
and 
$f'=\frac12 g_2+\frac12 g_{-2}$
we have 
$$f_1+f_5=f'_{2}+f'_{4}.$$
In addition, it is well known that, in such a mixture model, $(p, \alpha, \beta)$ cannot be distinguished from 
$(1-p,\beta,\alpha)$: it is the so-called \textit{label switching} problem.
So we will assume that $p\in (0,1/2)$ (for mixtures on $\R$ it is assumed alternatively that $\alpha<\beta$ but ordering angles is less relevant). 

Now let us study the specific problems of identifiability on the circle, that do not appear on $\R$. 
First, if $f$ is the uniform probability, the model is not identifiable, so we have to exclude this case. Another case to exclude is the case of $\delta$-periodic functions. 
 Indeed in this case 
$f_{\alpha}=f_{\alpha+\delta}$.
These functions have the property that $f^{\star l}=0$ for all $l\notin (2\pi/\delta)\Z$. So we will require that the Fourier coefficients of $f$ do not cancel out too much. Here we will  assume 
$$\text{ for all } l\in \{1,2,3,4\}, \qquad f^{\star l}\neq 0,\text{ and} \quad f^{\star l}=\overline{f^{\star l}} .$$
This last assumption  can be related to the symmetry of $f$. Indeed if $f$ is zero-symmetric then all its Fourier coefficients are real. Symmetry is a usual assumption in this mixture context, to distinguish between the translations of $f$:
for any $\delta\in \R$, $$p f( x-\alpha)+(1-p) f(x-\beta)=p f_\delta( x-\alpha+\delta)+(1-p) f_\delta(x-\beta+\delta)$$
More precisely, \cite{Hunterwang07} show that symmetry is a sufficient and necessary condition for identifiability of the model mixture on $\R$.
In the circle framework, it is natural to work with Fourier coefficients rather than Fourier transform as on $\R$. A lot of circular densities have their Fourier coefficients real, provided that their location parameter is $\mu=0$: for example the Jones-Pewsey density, which includes the cardioid, the wrapped Cauchy density, and the von Mises density.
Here we require the assumption only for the first 4 Fourier coefficients of $f$ (due to our proof), which is milder than symmetry. 

Let us now state our identifiability result under these assumptions. Note that \cite{holzmann2004} have studied the identifiability of this model when $f$ belongs to a parametric scale-family of densities, but here we face a nonparametric problem concerning $f$. 

\begin{theorem} \label{identifiability}
Assume that $\theta=(p, \alpha, \beta)$ and $\theta'=(p', \alpha', \beta')$ belong to 
$$\left\{(p, \alpha, \beta)\in (0,1/2)\times \S^1\times \S^1, \quad \alpha\neq \beta\pmod{2\pi}\right\}$$
and that 
$f,f'$ belongs
 to  
$$ \left\{f: \S^1\to \R  \text{ density such that, for all } l\in \{1,2,3,4\}, \; f^{\star l}\in \R\backslash \{0\}\right\}.$$  
Suppose 
$pf_{\alpha}+(1-p)f_{\beta}=p'f'_{\alpha'}+(1-p')f'_{\beta'}$. Then
\begin{enumerate}
\item either $(p',\alpha', \beta')$=$(p,\alpha, \beta)$ and $f'=f$,

\item or $(p', \alpha', \beta')=(p, \alpha+\pi, \beta+\pi)$ and 
$f'=f_{\pi}$ 

\item or if $\beta-\alpha=\pi \pmod{2\pi}$, then   $f'$ is a linear combination of $f$ and $f_{\pi}$, 
and 
either 
 $(\alpha',\beta')=(\alpha, \beta)$,  
or $(\alpha',\beta')=(\beta, \alpha)$.

\item or  if $\beta-\alpha=\pm 2\pi/3 \pmod{2\pi}$, then  $f'$ is a linear combination of 
$f_{\pi/3}, f_{-\pi/3}, f_{\pi}$
and $p'=(1-2p)/(2-3p)$ and 

(a) if $\beta-\alpha=2\pi/3 $,
  $(\alpha',\beta')=(\alpha+\pi,\beta-\pi/3)$
  {or $(\alpha',\beta')=(\alpha,\beta+2\pi/3)$}, 

(b) if $\beta-\alpha=-2\pi/3 $,
  $(\alpha',\beta')=(\alpha+\pi,\beta+\pi/3)$ 
 {or $(\alpha',\beta')=(\alpha,\beta-2\pi/3)$.}
\end{enumerate} 
\end{theorem}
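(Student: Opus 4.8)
The plan is to pass to the Fourier domain and exploit the reality and non-vanishing of the first four coefficients of $f$ and $f'$. Since $(g_\alpha)^{\star l}=e^{-il\alpha}g^{\star l}$, the hypothesis $pf_\alpha+(1-p)f_\beta=p'f'_{\alpha'}+(1-p')f'_{\beta'}$ reads, for every $l\in\Z$,
$$ f^{\star l}\,\psi_\theta(l)={f'}^{\star l}\,\psi_{\theta'}(l),\qquad \psi_\theta(l):=pe^{-il\alpha}+(1-p)e^{-il\beta}. $$
First I would record that $\psi_\theta(l)$ never vanishes: as $|pe^{-il\alpha}|=p\neq 1-p=|(1-p)e^{-il\beta}|$ (because $p\in(0,1/2)$), the two terms cannot cancel, and likewise for $\psi_{\theta'}$. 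Hence, for $l\in\{1,2,3,4\}$, where $f^{\star l},{f'}^{\star l}\in\R\setminus\{0\}$, the quotient $\psi_\theta(l)/\psi_{\theta'}(l)={f'}^{\star l}/f^{\star l}=:r_l$ is a \emph{nonzero real} number. Equivalently, $\mathrm{Im}\big(\psi_\theta(l)\overline{\psi_{\theta'}(l)}\big)=0$ for $l=1,2,3,4$, and this system of four trigonometric equations drives the whole analysis.

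Next I would solve this system. Writing $\alpha=m-d,\ \beta=m+d$ and $\alpha'=m'-d',\ \beta'=m'+d'$ gives $\psi_\theta(l)=e^{-ilm}\big(\cos(ld)+i(2p-1)\sin(ld)\big)$, so with $\mu:=m-m'$ the reality conditions become relations of the form $\sin(l\mu)\,C_l=\cos(l\mu)\,S_l$ ($l=1,2,3,4$), where $C_l,S_l$ are explicit real combinations of $\cos(ld),\sin(ld),\cos(ld'),\sin(ld')$ and the weights. The guiding principle is that a global rotation by $\delta$ multiplies $f^{\star1}$ by $e^{-i\delta}$, so only $\delta\in\{0,\pi\}$ preserves its reality; this is exactly what makes cases 1 and 2 the generic solutions. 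I would treat this generic regime first and show the four equations force equal weights $p'=p$ and equal gaps (so $\{\alpha',\beta'\}$ is a rotation of $\{\alpha,\beta\}$) with the rotation $\mu$ satisfying $e^{-i\mu}=\pm1$, i.e. a rotation by $0$ or $\pi$. Together with the convention $p<1/2$ fixing the labelling, this yields either $(p',\alpha',\beta',f')=(p,\alpha,\beta,f)$ or $(p,\alpha+\pi,\beta+\pi,f_\pi)$, using in the second case that $f_\pi^{\star l}=(-1)^lf^{\star l}$ remains real and reproduces $g$.

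Then I would isolate the configurations where the generic deduction fails, which are precisely the degeneracies of the sequence $l\mapsto\psi_\theta(l)$: when $\beta-\alpha=\pi$, $\psi_\theta(l)=e^{-il\alpha}\big(p+(1-p)(-1)^l\big)$ is a pure phase times a real with only two distinct values (period $2$ in $l$); when $\beta-\alpha=\pm2\pi/3$, $\psi_\theta(l)=e^{-il\alpha}\big(p+(1-p)\,\omega^{\pm l}\big)$ with $\omega=e^{2i\pi/3}$ is periodic with period $3$. In each such case the four phase equations no longer pin down $(\alpha',\beta')$, and I would enumerate the admissible pairs (same or swapped in case 3; the two listed pairs in case 4). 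Finally, for the retained angles I recover $f'$ from ${f'}^{\star l}=f^{\star l}\psi_\theta(l)/\psi_{\theta'}(l)$ for \emph{all} $l$ (not only $l\le4$): in case 3 the ratio is constant on even and on odd $l$, exhibiting $f'$ as a linear combination of $f$ and $f_\pi$; in case 4 the ratio is $3$-periodic, and matching it to a real density of the form $Af_{\pi/3}+Bf_{-\pi/3}+Cf_\pi$ additionally forces $p'=(1-2p)/(2-3p)$.

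The main obstacle I anticipate is the exhaustive and careful resolution of this four-equation trigonometric system. Recovering the generic cases is routine, but one must correctly detect the two exceptional gaps $\beta-\alpha\in\{\pi,\pm2\pi/3\}$ where identifiability genuinely breaks, and then, inside those cases, extract the exact parameter relations and the linear-combination form of $f'$. This last step is delicate: it requires passing back to all Fourier coefficients to reconstruct $f'$ and verifying that the candidate solution is a bona fide real density reproducing $g$, all while respecting the $p<1/2$ and label-switching conventions so as not to miss or double-count branches.
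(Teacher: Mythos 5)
Your opening reduction is exactly the paper's: passing to Fourier coefficients, using that $f^{\star l},{f'}^{\star l}$ are nonzero reals for $l\le 4$, and concluding that $\Im\bigl(\psi_\theta(l)\overline{\psi_{\theta'}(l)}\bigr)=0$ for $l=1,2,3,4$ (the paper writes this as $M^l(\theta')\overline{M^l(\theta)}\in\R$). But from that point on your plan has a genuine gap: the entire difficulty of the theorem is to prove that this system of four trigonometric equations admits \emph{only} the solutions listed, and you never supply a mechanism for that. The paper's mechanism is concrete: writing the condition as $\sum_{k=1}^4\lambda_k\sin(l\gamma_k)=0$, $l=1,\dots,4$, with $\gamma_1=\alpha'-\beta$, $\gamma_2=\alpha'-\alpha$, $\gamma_3=\beta'-\beta$, $\gamma_4=\beta'-\alpha$ and strictly positive weights $\lambda_k$, it observes that a nontrivial solution forces $\det\bigl(\sin(i\gamma_j)\bigr)_{1\le i,j\le4}=0$, and a Chebyshev--Vandermonde computation gives
$$\det\bigl(\sin(i\gamma_j)\bigr)=64\prod_{k=1}^4\sin(\gamma_k)\prod_{1\le i<j\le4}\bigl(\cos\gamma_i-\cos\gamma_j\bigr),$$
so the $\gamma_k$'s can only take values in $\{0,\pi,\gamma,-\gamma\}$. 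All of the theorem's case structure (including why $\pi$ and $\pm2\pi/3$ are the \emph{only} exceptional gaps, and the identities $p'=(1-2p)/(2-3p)$, $\lambda_1=\lambda_2+\lambda_3$, etc.) then comes from an exhaustive enumeration over these values combined with the constraints $\gamma_1-\gamma_2=\gamma_3-\gamma_4=\alpha-\beta\neq0$ and $p,p'<1/2$. Your plan replaces this by a ``guiding principle'' (a global rotation $\delta$ preserves reality of $f^{\star1}$ only for $\delta\in\{0,\pi\}$) and a periodicity heuristic, and you yourself flag the resolution of the system as the anticipated obstacle; that is precisely the step that cannot be waved through.

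Moreover, the two heuristics you offer in its place are unreliable. The rotation principle presumes $f'$ is a rotate of $f$, which is false in exactly the interesting cases 3 and 4, where $f'$ is a nontrivial linear combination of rotates. And the periodicity criterion (``the exceptional gaps are those where $l\mapsto\psi_\theta(l)$ is periodic'') would equally flag $\beta-\alpha=\pm\pi/2$, where $p+(1-p)e^{-il(\beta-\alpha)}$ is $4$-periodic in $l$; yet the theorem shows that gap is \emph{not} exceptional. So periodicity of $\psi_\theta$ is neither the right necessary nor sufficient criterion; only the full resolution of the sine system identifies the true exceptional set. Your final reconstruction step (recovering $f'$ from ${f'}^{\star l}=f^{\star l}\psi_\theta(l)/\psi_{\theta'}(l)$ for all $l$, and reading off the linear-combination form from the periodic ratio) is sound in outline and matches what the paper does implicitly, but it only becomes available after the admissible $(\theta,\theta')$ pairs have been pinned down, which is the part your proposal leaves open.
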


Case 2.  arises from a  specific feature of circular distributions: if $f$ is symmetric with respect to  0 then it is  symmetric with respect to $\pi$. Unlike the real case, a symmetry assumption does not exclude the case $f'(x)=f(x-\pi)$. To bypass this we could assume for instance
$ f^{\star 1}> 0.$
Indeed %
for each $l\in \Z$, $(f_{\pi})^{\star l}=f^{\star  l}(-1)^l$, so the Fourier coefficients of $f$ and $ f_\pi$ have opposite sign for any odd $l$.
 With our assumption, we recover among $f$ and $ f_\pi$ the one with positive first Fourier coefficient, i.e. with positive  mean resultant length.
Nevertheless our estimation procedure begins with the parametric part so that this assumption concerning only the nonparametric part will not allow us to distinguish $\alpha$ from $\alpha+\pi$ in this first parametric estimation step. That is why we rather choose to assume that $\alpha$ and  $\beta$ belong to $[0,\pi) \pmod{\pi}$.

Case 3. concerns bipolar data since $\alpha$ and $\beta$ are diametrically opposed (separated by $\pi$ radians). In this case $\alpha$ and $\beta$ are identifiable, but $p$ and $f$ not. 
{
Indeed, for any density $f$ and any $0<p'\leq p <1/2$, we can find $q\in (0,1]$ such that  $f'=qf+(1-q)f_{\pi}$ verifies $pf_{\alpha}+(1-p)f_{\beta}=p'f'_{\alpha'}+(1-p')f'_{\beta'}$.
}
{  
Thus our result demonstrates that bimodal data sets with opposite modes lead to non-identifiability issues, and this highlights a fundamental issue in considering a too large class of possible densities.
}

{Let us now discuss the case 4., which is the most curious
({we shall only comment the first case (a)}, the other is similar). 
{Let us set 
$$f'(x)=(1-p)f\left(x-\frac{\pi}3\right)+ (1-p) f\left(x+\frac{\pi}3\right)+ (2p-1) f(x-\pi).$$
This function is symmetric if $f$ is symmetric, verifies $\int_{\S^1} f'=1$ and may be positive for some values of $p$ (depending on $f$): see  Figure~\ref{figf'}.}
\begin{figure}[h]
\begin{center}
\includegraphics[scale=0.6]{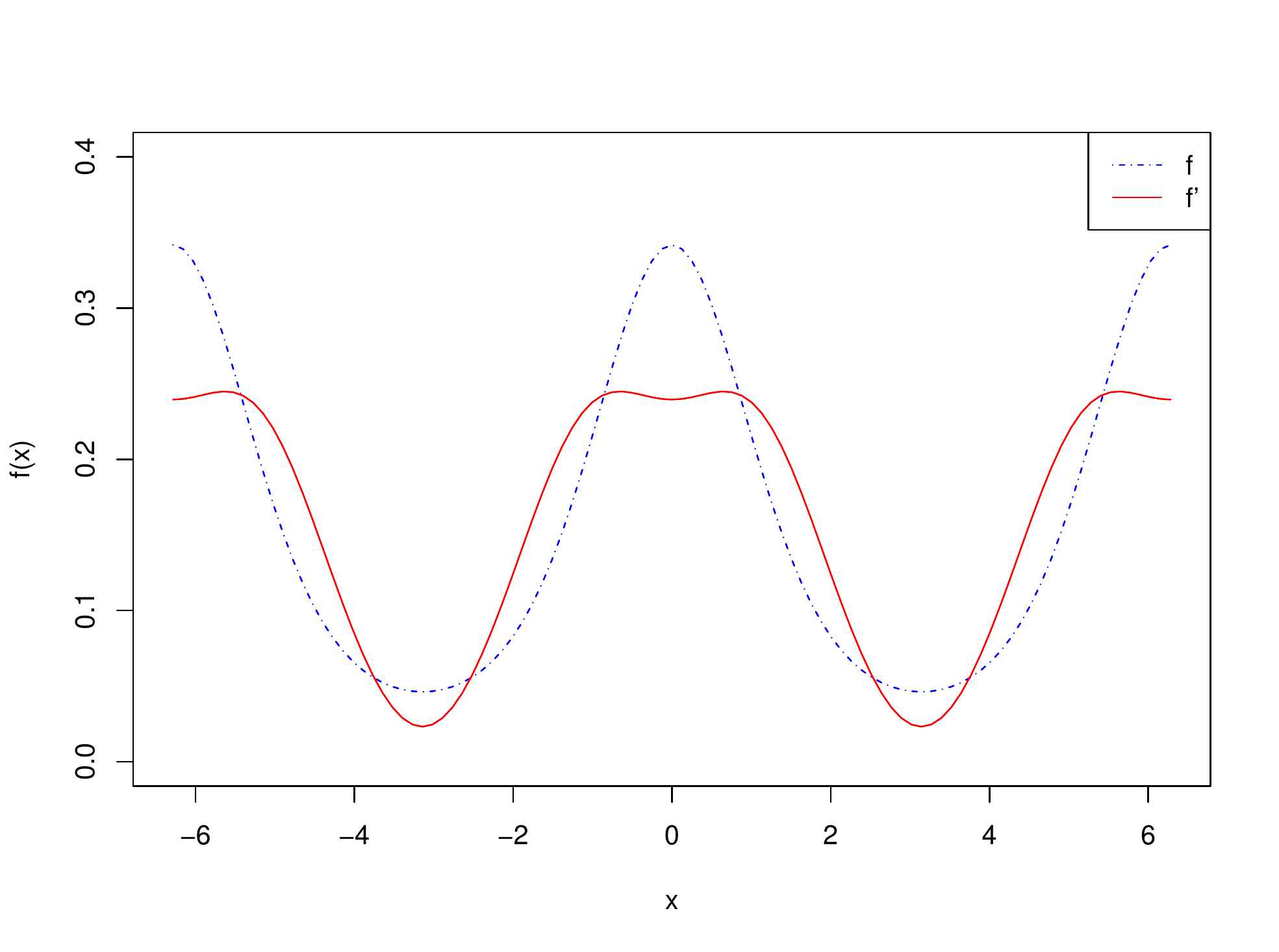}
\end{center}
\caption{Plot of a circular density $f$  (dashed blue), and of 
$f'=(1-p)f_{\frac{\pi}3}+ (1-p) f_{-\frac{\pi}3}+ (2p-1) f_\pi$ 
(solid red).
Here $f$ is  the von Mises density  with mean 0 and concentration 1. 
{In this case, $f'$ is positive as soon as $p\geq 0.36$, here $p=0.4$.}
}
\label{figf'}
\end{figure}
Then we can write $f'_{\pi/3}$:
$$f'\left(x-\frac\pi{3}\right)=(1-p)f\left(x-\frac{2\pi}3\right)+ (1-p) f\left(x\right)+ (2p-1) f\left(x-\frac{4\pi}3\right),$$
as well as $f'_{\pi}$:
$$f'(x-\pi)=(1-p)f\left(x-\frac{4\pi}3\right)+ (1-p) f\left(x-\frac{2\pi}3\right)+ (2p-1) f(x).$$
Hence a mixture of  $f'_{\pi}$ and $f'_{\pi/3}$  gives a mixture of $f(x), f(x-\frac{2\pi}3), f(x-\frac{4\pi}3)$: 
\begin{eqnarray*}p'f'(x-\pi)+(1-p')f'\left(x-\frac\pi{3}\right)&=&
[p'(2p-1)+(1-p')(1-p)]f(x)\\
&&+[p'(1-p)+(1-p')(1-p)] f\left(x-\frac{2\pi}3\right)\\
&&+[p'(1-p)+(1-p')(2p-1)]f\left(x-\frac{4\pi}3\right)
\end{eqnarray*}
 If now $p'=(1-2p)/(2-3p)$, then $p'(1-p)+(1-p')(2p-1)=0$ and the third component  $f(x-\frac{4\pi}3)$ vanishes. Thus
$$p' f'(x-\pi)+(1-p')f'\left(x-\frac{\pi}3\right) =p f( x )+(1-p)f\left(x-\frac{2\pi}3\right).$$}

In such a particular case, we cannot identify $\theta$ nor $f$. However this happens only when $\beta-\alpha=\pm 2\pi/3$. So,
to exclude these cases, we will now assume $\beta \neq \alpha \pmod{2\pi/3}$.

\bigskip 

Finally, we shall assume 
that $f\in \mathcal{F}$ with some assumptions for $\mathcal{F}$:
\begin{assumption}\label{hypf} 
$$\mathcal F\subset \left\{f: \S^1\to \R  \text{ density s.t. for all } l\in \{1,2,3,4\}, \quad f^{\star l}\in \R\backslash \{0\}\right\}$$
\end{assumption}
or 
\begin{assumption}\label{hypf2} 
$$\mathcal F\subset \left\{f: \S^1\to \R  \text{ density s.t. for all } l\in \{1,2,3,4\}, \quad f^{\star l}\in \R\backslash \{0\},\quad f^{\star 1}>0\right\}$$
\end{assumption}
and we shall assume that $\theta\in \Theta$ with some assumptions for $\Theta$:
\begin{assumption}\label{hyptheta2}
$$\Theta \subset \left\{(p, \alpha, \beta)\in \left(0, \frac12\right)\times \S^1\times  \S^1, \quad \alpha\neq \beta \pmod{\pi, 2\pi/3}\right\}$$
\end{assumption}
where $\alpha\neq \beta  \pmod{2\pi/3,\pi}$ means 
$\beta-\alpha\notin\{-\frac{2\pi}{3},0,\frac{2\pi}{3},\pi\}+ 2\pi\Z$, or
\begin{assumption}\label{hyptheta}
$$\Theta \subset \left\{(p, \alpha, \beta)\in \left(0, \frac12\right)\times [0,\pi)\times  [0,\pi), \quad \alpha\neq \beta \pmod{2\pi/3}\right\}$$
\end{assumption}
Note that Assumption \ref{hyptheta} implies Assumption \ref{hyptheta2}, and Assumption \ref{hypf2} implies Assumption \ref{hypf}. We can write the following result.
\begin{corollary} 
Under Assumptions \ref{hypf} and \ref{hyptheta}, or under Assumptions \ref{hypf2} and \ref{hyptheta2}, model \eqref{model} is identifiable.
Under Assumptions \ref{hypf} and \ref{hyptheta2}, model \eqref{model} is identifiable modulo $\pi$, that is to say that if 
$pf_{\alpha}+(1-p)f_{\beta}=p'f'_{\alpha'}+(1-p')f'_{\beta'}$
then $p'=p$ and 
 either $(\alpha', \beta')$=$(\alpha, \beta)$ and $f'=f$,
or $(\alpha', \beta')=(\alpha+\pi, \beta+\pi)$ and 
$f'=f_{\pi}$.
\end{corollary}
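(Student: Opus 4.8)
The plan is to deduce the corollary directly from Theorem~\ref{identifiability} by eliminating, under each choice of assumptions, those among its four alternatives that cannot occur. First I would check that in every case the hypotheses of the theorem are met: Assumptions~\ref{hypf}/\ref{hypf2} guarantee that $f$ and $f'$ have nonvanishing real first four Fourier coefficients, and Assumptions~\ref{hyptheta}/\ref{hyptheta2} place both $(p,\alpha,\beta)$ and $(p',\alpha',\beta')$ in $(0,1/2)\times\S^1\times\S^1$ with $\alpha\neq\beta$. Hence the equality $pf_\alpha+(1-p)f_\beta=p'f'_{\alpha'}+(1-p')f'_{\beta'}$ forces one of the four conclusions of the theorem, and it only remains to see which survive.

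Next I would treat the full identifiability statement. Cases 3 and 4 require respectively $\beta-\alpha=\pi$ and $\beta-\alpha=\pm 2\pi/3$ modulo $2\pi$. Both Assumption~\ref{hyptheta} (through $\alpha\neq\beta\pmod{2\pi/3}$, recalling that $\alpha,\beta\in[0,\pi)$ gives $\beta-\alpha\in(-\pi,\pi)$, so that $\beta-\alpha=\pi$ is impossible too) and Assumption~\ref{hyptheta2} (through $\beta-\alpha\notin\{-2\pi/3,0,2\pi/3,\pi\}+2\pi\Z$) rule these out, leaving only cases 1 and 2. To discard case 2 I would argue differently in the two regimes. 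Under Assumption~\ref{hyptheta}, case 2 would require $\alpha'\equiv\alpha+\pi\pmod{2\pi}$; but $\alpha\in[0,\pi)$ forces $\alpha+\pi\pmod{2\pi}\in[\pi,2\pi)$, which cannot equal $\alpha'\in[0,\pi)$. Under Assumption~\ref{hypf2}, case 2 would give $f'=f_\pi$, whence $(f')^{\star1}=(f_\pi)^{\star1}=-f^{\star1}<0$, contradicting $(f')^{\star1}>0$. In both situations only case 1 remains, which is precisely identifiability.

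Then I would handle the weaker ``identifiable modulo $\pi$'' claim under Assumptions~\ref{hypf} and \ref{hyptheta2}. The same angle argument as above eliminates cases 3 and 4, but now neither the interval restriction $[0,\pi)$ nor the sign condition $f^{\star1}>0$ is available, so both cases 1 and 2 persist. These two are exactly the two alternatives in the statement: $p'=p$ together with either $(\alpha',\beta')=(\alpha,\beta)$, $f'=f$, or $(\alpha',\beta')=(\alpha+\pi,\beta+\pi)$, $f'=f_\pi$. This gives identifiability up to the global $\pi$-shift.

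The proof is essentially bookkeeping, since all the substance lies in Theorem~\ref{identifiability}; the only point requiring genuine care is the elimination of case 2. There one must correctly interpret the angle equalities on $\S^1=\R/2\pi\Z$ and use the reflection identity $(f_\pi)^{\star l}=(-1)^l f^{\star l}$ to see that the normalization $f^{\star1}>0$ is what breaks the $f\leftrightarrow f_\pi$ symmetry, while the restriction $\alpha,\beta\in[0,\pi)$ achieves the same effect geometrically. I would also double-check that the excluded angle sets are read correctly modulo $2\pi$, so that cases 3 and 4 are genuinely ruled out and no admissible configuration is discarded by mistake.
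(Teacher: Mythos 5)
Your proposal is correct and is precisely the argument the paper intends: the corollary is an immediate consequence of Theorem~\ref{identifiability}, obtained by checking that the angle restrictions in Assumptions~\ref{hyptheta}/\ref{hyptheta2} exclude cases 3 and 4, and that case 2 is excluded either geometrically (since $\alpha,\alpha'\in[0,\pi)$ cannot satisfy $\alpha'=\alpha+\pi\pmod{2\pi}$) or via the sign condition $f^{\star 1}>0$ together with $(f_\pi)^{\star 1}=-f^{\star 1}$. Your treatment of the remaining case under Assumptions~\ref{hypf} and \ref{hyptheta2}, where both cases 1 and 2 survive and yield identifiability modulo $\pi$, likewise matches the paper's reading.
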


Moreover, the proof of Theorem~\ref{identifiability} provides the following statement.

\begin{lemma} \label{EqMtheta}
Under Assumption~\ref{hyptheta2}, denoting 
$
M^l(\theta):= pe^{-i \alpha l }+ (1-p)e^{-i\beta l}
$, for all $\theta,\theta' \in \Theta$, 
$$\forall 1 \leq l \leq 4, \; \Im\left(M^l(\theta')\overline{M^l(\theta)}\right) =0 \Leftrightarrow \theta'=\theta\text{ or } \theta'{=}\theta+\pi.$$
where $\theta'{=}\theta+\pi$ means $(p', \alpha', \beta')=(p, \alpha+\pi, \beta+\pi)$.
\end{lemma}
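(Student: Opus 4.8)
The plan is to treat the two implications separately. The implication ``$\Leftarrow$'' is a direct computation: if $\theta'=\theta$ then $M^l(\theta')\overline{M^l(\theta)}=|M^l(\theta)|^2\in\R$, and if $\theta'=\theta+\pi$ then factoring $e^{-i\pi l}=(-1)^l$ out of $M^l(\theta')=pe^{-i(\alpha+\pi)l}+(1-p)e^{-i(\beta+\pi)l}$ gives $M^l(\theta')=(-1)^lM^l(\theta)$, whence $M^l(\theta')\overline{M^l(\theta)}=(-1)^l|M^l(\theta)|^2\in\R$. In both cases the imaginary part vanishes for every $l$.

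For the substantive implication ``$\Rightarrow$'' I would pass to measures. Let $\nu=p\,\delta_\alpha+(1-p)\delta_\beta$ and $\nu'=p'\delta_{\alpha'}+(1-p')\delta_{\beta'}$ be the mixing measures on $\S^1$, so that $\widehat\nu(l)=M^l(\theta)$ and $\widehat{\nu'}(l)=M^l(\theta')$, and write $\tilde\mu$ for the reflection $\tilde\mu(A)=\mu(-A)$. Setting $\rho:=\nu'\ast\tilde\nu$ one has $\widehat\rho(l)=M^l(\theta')\overline{M^l(\theta)}$, and $\rho$ is the probability measure carried by the four angles $a=\alpha'-\alpha$, $b=\alpha'-\beta$, $c=\beta'-\alpha$, $d=\beta'-\beta$ with respective masses $pp'$, $p'(1-p)$, $p(1-p')$, $(1-p)(1-p')$. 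The hypothesis is exactly $\widehat\rho(l)\in\R$ for $l\in\{1,2,3,4\}$. Since $\widehat{\tilde\rho}(l)=\overline{\widehat\rho(l)}$, the real signed measure $\eta:=\rho-\tilde\rho$ satisfies $\widehat\eta(l)=2i\,\Im\widehat\rho(l)=0$ for $l\in\{1,2,3,4\}$, while $\widehat\eta(0)=0$ and $\widehat\eta(-l)=\overline{\widehat\eta(l)}$; hence $\widehat\eta$ vanishes on the nine consecutive integers $-4,\dots,4$. Now a signed measure supported on $N$ points whose Fourier coefficients vanish on $N$ consecutive integers must be zero (a Vandermonde argument on the distinct unimodular nodes $e^{-ix_j}$), and here $\eta$ is carried by at most the eight angles $\{\pm a,\pm b,\pm c,\pm d\}$ while its coefficients vanish on nine consecutive integers; therefore $\eta=0$, i.e.\ $\rho=\tilde\rho$ is reflection invariant.

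It then remains to read off $\theta'$ from $\rho=\tilde\rho$. Comparing the four masses and using $p,p'\in(0,1/2)$, one checks that $pp'$ and $(1-p)(1-p')$ are always the strict minimum and strict maximum among them, and that the four are pairwise distinct unless $p'=p$, in which case the only coincidence is $p'(1-p)=p(1-p')$. I would then run a short case analysis on the reflection matching: the extremal atoms $a$ and $d$ must be fixed by the reflection, forcing $a,d\in\{0,\pi\}$, while $b$ and $c$ are either fixed or swapped; in addition one must allow the coincidences $a=d$ (i.e.\ $\Delta'=\Delta$) and $b=c$ (i.e.\ $\Delta'=-\Delta$), with $\Delta=\beta-\alpha$ and $\Delta'=\beta'-\alpha'$, under which a merged atom may instead be exchanged with another. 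The identity matching returns precisely $\theta'=\theta$ and $\theta'=\theta+\pi$, whereas every genuine swap forces either $\Delta\in\{0,\pi\}$, or, through the mass balance $p'(1-p)+p(1-p')=(1-p)(1-p')$ equivalent to $p'=(1-2p)/(2-3p)$, the value $\Delta\in\{\pm2\pi/3\}$.

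The main obstacle is this last combinatorial step rather than the Fourier reduction, which is automatic. The delicate point is to track exactly which atoms merge and which reflection matchings respect the masses, because the spurious symmetric configurations that arise are precisely cases 3 and 4 of Theorem~\ref{identifiability}; these occur only for $\beta-\alpha\in\{0,\pm2\pi/3,\pi\}$, which are the values removed by Assumption~\ref{hyptheta2}. Under that assumption only the two identity solutions $\theta'=\theta$ and $\theta'=\theta+\pi$ survive, which is the claim.
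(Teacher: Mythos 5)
Your argument is correct, and its key reduction is genuinely different from the paper's. The paper expands $M^l(\theta')\overline{M^l(\theta)}=\sum_{k=1}^4\lambda_k e^{-i\gamma_k l}$ and attacks the sine system $\sum_k\lambda_k\sin(l\gamma_k)=0$, $l=1,\dots,4$, via two linear-algebra lemmas: the closed-form determinant $\det(\sin(i\gamma_j))=64\prod_k\sin\gamma_k\prod_{i<j}(\cos\gamma_i-\cos\gamma_j)$ (Chebyshev polynomials plus a Vandermonde determinant), and a classification of the vanishing patterns $\gamma_k\in\{0,\pi,\gamma,-\gamma\}$; the weights $\lambda_k$ only re-enter in the subsequent case analysis. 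You instead phrase the hypothesis as $\widehat\rho(l)\in\R$ for $1\le l\le4$, where $\rho=\nu'\ast\tilde\nu$, and kill the antisymmetric part $\eta=\rho-\tilde\rho$ in one stroke (nine consecutive vanishing Fourier coefficients, at most eight support points). This buys a cleaner reduction: reflection invariance of $\rho$ constrains angles and weights simultaneously, so the exceptional configurations emerge directly as mass-balance equations, whereas the paper's determinant step ignores the weights and must exclude spurious patterns afterwards. The concluding step is then parallel in both proofs: your matching of atoms under the reflection, using that $pp'$ and $(1-p)(1-p')$ are the strict extremes and handling the mergers $a=d$ and $b=c$, isolates the same exceptional values $\beta-\alpha\in\{0,\pi,\pm 2\pi/3\}$ (the last through $p'(1-p)+p(1-p')=(1-p)(1-p')$, i.e.\ $p'=(1-2p)/(2-3p)$), exactly the values removed by Assumption~\ref{hyptheta2}. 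Two statements in your sketch need care when written out: the blanket claim that the extremal atoms $a,d$ are always fixed by the reflection fails precisely in the merged case $b=c$ under the mass balance (you do flag this), and the surviving solutions are not literally an ``identity matching'' --- they arise from $a=d$ merged (hence fixed in $\{0,\pi\}$, giving $\theta'=\theta$ or $\theta'=\theta+\pi$) together with the swap of $b$ and $c$, which is what forces $p'=p$. With those details spelled out, your proof closes and reaches the same conclusion as the paper's.
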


\section{Estimation for the parametric part}

Now, let us denote for all $l \in \Z$
\[
M^l(\theta):= pe^{-i \alpha l }+ (1-p)e^{-i\beta l}.
\]
In model (\ref{model}) the Fourier coefficients of $g$ satisfy for any $l$: 
 \begin{equation*}\label{coeff-g}
g^{\star l} = (p_0e^{-i \alpha_0 l} + (1-p_0) e^{-i  \beta_0 l} )f^{\star l }.
\end{equation*}

Thus  $g^{\star l} = M^l(\theta_0)f^{\star l } $ and the previous lemma gives that $\theta=\theta_0$ (or $\theta_0+\pi$) if and only if, for each $l\in \{1,\dots, 4\}$,  
$$ \Im\left(M^l(\theta_0)\overline{M^l(\theta)}\right) =0 \Leftrightarrow \Im\left(g^{\star l} \overline{M^l(\theta)}\right) =0 $$
using that $f^{\star l }$ are non-zero real numbers. 
This invites us to consider 
\[
S(\theta):=  
\sum_{l =-4}^{4} \left ( \Im \left ( g^{\star l}\overline{{M}^l(\theta)} \right ) \right )^2 = 
\sum_{l =-4}^{4} \left ( \Im \left (   g^{\star l}\{pe^{i \alpha l} + (1-p) e^{i \beta l} \} \right )   \right )^2.
\] 
Note that $g^{\star0}\overline{{M}^{0}(\theta)}=1/(2\pi)$ and that $ \Im \left ( g^{\star (-l)}\overline{{M}^{-l}(\theta)} \right ) =
\Im \left (\overline{ g^{\star l}}{{M}^l(\theta)} \right ) =-\Im \left ( g^{\star l}\overline{{M}^l(\theta)} \right ) $ so that 
we can also write \[
S(\theta)=  
2\sum_{l =1}^{4} \left ( \Im \left ( g^{\star l}\overline{{M}^l(\theta)} \right ) \right )^2 .
\]

The empirical counterpart of $S(\theta)$ is 
\begin{eqnarray*}
\tilde S_n(\theta) &=&  \sum_{l=-4}^4 \left ( \Im \left (  \widehat {g^{\star l}}\overline{ M^l(\theta)} \right  ) \right )^2\\
&=& \sum_{l=-4}^4 \left (\Im \left ( \frac {1}{2 \pi  n} \sum_{k=1}^n  e^{-i l X_k}\overline{ M^l(\theta)} \right )  \right)^2  \\
&=&\frac{1}{4 \pi^2n^2}  \sum_{l=-4}^4\sum_{1\leq k,j\leq n } \Im \left ( e^{i lX_k}{ M^l(\theta)}\right ) \Im \left (  e^{i lX_j}{ M^l(\theta)} \right )   .
\end{eqnarray*}
Next, we consider a slightly modified version of $ \tilde S_n(\theta)$ by removing the diagonal terms 

\begin{equation}\label{Sn}
S_n(\theta) = \frac{1}{ 4 \pi^2n(n-1) }  \sum_{l=-4}^{4} \sum_{1\leq k \neq j\leq n}    \Im \left (  e^{i lX_k}{ M^l(\theta)}\right ) \Im \left (  e^{i lX_j}{ M^l(\theta)} \right ) .
\end{equation}
Let us denote
\[
Z_{ k}^l(\theta):= \Im \left (  \frac{e^{i lX_k}}{2 \pi}  M^l(\theta) \right )\quad
\text{ and }\quad 
J^l(\theta) :=  \Im \left (\overline{g^{\star l}}{  M^l(\theta)}\right ).
\]
Hence
\[
S_n(\theta)= \frac{1}{n(n-1)} \sum_{l=-4}^{4}  \sum_{1\leq k \neq  j\leq n} Z_{k}^l(\theta) { Z_{j}^l(\theta) }.
\]
Note that we have 
$\E( Z_{k}^l(\theta) )= J^l(\theta),$
and $S_n(\theta)$ is an unbiased estimator of $S(\theta)$. 

Let the estimator of $\theta_0$ be

\begin{equation}\label{estimator_theta}
\hat \theta_n = \argmin_{\theta \in \Theta} S_n(\theta). 
\end{equation}
For this estimator we can prove the following consistency result.

\begin{theorem}\label{consistance}
Consider $\Theta$  a compact set included in 
$$\left\{(p, \alpha, \beta)\in \left(0, 1/2\right)\times \S^1\times \S^1, \quad \alpha\neq \beta  \pmod{2\pi/3,\pi}\right\}$$
 and the estimator $\hat \theta_n = \argmin_{\theta \in \Theta} S_n(\theta). $ We have 
 $\hat \theta_n \rightarrow  \theta_0 \pmod{\pi}$ in probability. 
\end{theorem}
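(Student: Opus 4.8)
The plan is to treat $\hat\theta_n$ as a minimum-contrast (M-)estimator and run the classical argmin-consistency argument, which rests on two pillars: (i) identifying the minimizers of the population contrast $S$, and (ii) showing that $S_n$ converges to $S$ uniformly on the compact set $\Theta$. Once these are in hand, the conclusion is routine.

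For step (i), I would first rewrite $S$ using the factorization $g^{\star l}=M^l(\theta_0)f^{\star l}$ with $f^{\star l}\in\R\setminus\{0\}$. Since $\Im(g^{\star l}\overline{M^l(\theta)})=f^{\star l}\,\Im(M^l(\theta_0)\overline{M^l(\theta)})$, this gives
$$S(\theta)=2\sum_{l=1}^{4}(f^{\star l})^2\left(\Im\left(M^l(\theta_0)\overline{M^l(\theta)}\right)\right)^2\geq 0.$$
Because each $f^{\star l}\neq 0$, we have $S(\theta)=0$ if and only if $\Im(M^l(\theta_0)\overline{M^l(\theta)})=0$ for all $l\in\{1,\dots,4\}$. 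As $\Theta$ satisfies Assumption~\ref{hyptheta2}, Lemma~\ref{EqMtheta} applies and shows this happens exactly when $\theta=\theta_0$ or $\theta=\theta_0+\pi$. Hence $S$ is nonnegative and its zero set in $\Theta$ is $\{\theta_0,\theta_0+\pi\}\cap\Theta$, i.e. a single point in the quotient topology identifying $\theta$ and $\theta+\pi$. This is exactly the ``mod $\pi$'' target.

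For step (ii), I would exploit the special structure of the U-statistic $S_n$. Writing $\hat J^l(\theta):=\Im(\overline{\widehat{g^{\star l}}}\,M^l(\theta))$, the diagonal-free statistic differs from $\tilde S_n(\theta)=\sum_{l=-4}^{4}(\hat J^l(\theta))^2$ only by the diagonal correction, which is $O(1/n)$ uniformly in $\theta$ since every summand is bounded by $1$. The crucial observation is that the data enter $\tilde S_n$ only through the finitely many empirical Fourier coefficients $\widehat{g^{\star l}}$, $|l|\leq 4$, while the $\theta$-dependence is carried entirely by the factors $M^l(\theta)$, which satisfy $|M^l(\theta)|\leq 1$. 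The law of large numbers gives $\widehat{g^{\star l}}\to g^{\star l}$ in probability for each of these finitely many $l$, whence
$$\sup_{\theta\in\Theta}\bigl|\hat J^l(\theta)-J^l(\theta)\bigr|\leq \bigl|\widehat{g^{\star l}}-g^{\star l}\bigr|\,\sup_{\theta\in\Theta}|M^l(\theta)|\xrightarrow{\P}0.$$
Summing the uniformly bounded squares over the finite range $|l|\le 4$ and adding the $O(1/n)$ diagonal term yields $\sup_{\theta\in\Theta}|S_n(\theta)-S(\theta)|\to 0$ in probability.

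Finally, I would combine the two pieces in the standard way. Given any neighborhood $V$ of the minimizer set $\{\theta_0,\theta_0+\pi\}\cap\Theta$, compactness of $\Theta\setminus V$ and continuity of $S$ give $\eta:=\inf_{\Theta\setminus V}S>0$. On the event $\{\sup_\theta|S_n-S|<\eta/3\}$, whose probability tends to $1$, the inequality $S_n(\hat\theta_n)\leq S_n(\theta_0)<\eta/3$ together with $S_n(\theta)>S(\theta)-\eta/3\geq 2\eta/3$ for $\theta\in\Theta\setminus V$ forces $\hat\theta_n\in V$. Since $V$ is arbitrary, $\hat\theta_n\to\theta_0\pmod\pi$ in probability. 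I expect the main obstacle to be the uniform-convergence step: although it looks as if it needs a genuine uniform law of large numbers for U-statistics, it is in fact tamed by the finiteness of the frequency range and the boundedness of the $M^l(\theta)$, which let the supremum over $\theta$ pass onto the few empirical Fourier coefficients; the identification and argmin steps are then routine.
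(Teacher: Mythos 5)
Your proof is correct, and it reaches the conclusion by a genuinely different route through the key technical step. The identification part is the same as the paper's: the paper's Proposition~\ref{contrast} also factors out $f^{\star l}\neq 0$ and invokes Lemma~\ref{EqMtheta} to get that the zero set of $S$ in $\Theta$ is $\{\theta_0,\theta_0+\pi\}\cap\Theta$. The difference is in the stochastic part. The paper never establishes genuine uniform convergence of $S_n$ to $S$: instead it combines a pointwise variance bound, $\sup_{\theta}\E[(S_n(\theta)-S(\theta))^2]\leq C/n$ (Proposition~\ref{variance}, proved via a Hoeffding-type decomposition of the U-statistic into uncorrelated pieces), with an equi-Lipschitz property of $S_n$ whose constant is independent of $n$ and of the data (Lemma~\ref{lip}), and then feeds both into a classical argmin lemma adapted to two minimizers (Lemma~\ref{lem:convmin}), whose proof runs a finite-covering argument over the compact set $\Theta\setminus B$ to upgrade pointwise control to what is needed. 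You instead prove $\sup_{\theta\in\Theta}|S_n(\theta)-S(\theta)|\to 0$ in probability directly, exploiting the fact that the data enter only through the nine empirical Fourier coefficients $\widehat{g^{\star l}}$, $|l|\leq 4$, so that the supremum over $\theta$ passes onto $|\widehat{g^{\star l}}-g^{\star l}|$ via $|M^l(\theta)|\leq 1$, the diagonal correction being $O(1/n)$ uniformly; the final argmin step is then the textbook one, with no covering needed. Both arguments are complete (you, like the paper, implicitly use $\theta_0\in\Theta$ to compare $S_n(\hat\theta_n)$ with $S_n(\theta_0)$). What each buys: your route is shorter and more elementary for this particular contrast, since its dependence on the data is finite-dimensional; the paper's route follows the generic M-estimation scheme, and its ingredients (the Lipschitz bounds of Lemmas~\ref{bornes}--\ref{lip} and the variance decomposition of Proposition~\ref{variance}) are not wasted, as they are reused verbatim in the proofs of asymptotic normality (Theorem~\ref{normasymp}) and of the risk bound (Proposition~\ref{varthetachap}).
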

The last convergence means that for all $\epsilon>0$, the probability $\P(\|\hat \theta_n-\theta_0\|\leq \epsilon \text{ or  } \|\hat \theta_n-\theta_0-\pi\|\leq \epsilon)$ tends to 1 when $n$ goes to $+\infty$, where $\|.\|$ denotes the Euclidean norm. 

\begin{proof} 
$\Theta$ is a compact set and $S$ is continuous. Lemma~\ref{lip} ensures that $S_n$ is Lipschitz hence uniformly continuous, and 
Proposition~\ref{variance} ensures that for all $\theta$,
$| S_n(\theta)-S(\theta)|$ tends to 0 in probability.
Then
it is sufficient to apply a classical Lemma to conclude. See the details in Section~\ref{proof:consistence} 
\end{proof}

From now on, we assume that $\Theta$ is a compact set included in 
$\left(0, \frac12\right)\times [0,\pi)\times [0,\pi)$, as in Assumption \ref{hyptheta}. 
Then, $\theta_0+\pi$ is excluded and under Assumption \ref{hyptheta}, $\hat \theta_n \rightarrow  \theta_0 $ in probability. 
Moreover this estimator is asymptotically normal. We denote $\dot  \phi(\theta)$ the gradient of any function $\phi$ with respect to $\theta=(p,\alpha,\beta)$, $\ddot  \phi(\theta)$ the Hessian matrix and  for any matrix $A$, we denote $A^\top$ its transpose.

\begin{theorem}\label{normasymp} 
Consider $\Theta$  a compact set included in $$\left\{(p, \alpha, \beta)\in \left(0, 1/2\right)\times [0,\pi)\times [0,\pi), \quad \alpha\neq \beta  \pmod{2\pi/3}\right\}$$
 and the estimator $\hat \theta_n = \argmin_{\theta \in \Theta} S_n(\theta) .$ Assume that $\theta_0\in \Theta$. 
 { Let $\mathcal{A}$ be the Hessian matrix of $S$ in $\theta_0$: $\mathcal{A}=\ddot S(\theta_0)= 2\sum_{l=-4}^4 \dot J^l(\theta_0) \dot J^l(\theta_0)^\top$. 
 Then, if $\mathcal{A}$ is invertible,
 $$
 \sqrt{n}(\hat \theta_n -\theta_0) \overset{d}{\longrightarrow} \mathcal{N}(0, \Sigma), 
 $$
 where $\Sigma= \mathcal{A}^{-1}V\mathcal{A}^{-1} $, 
 $V=4\E(UU^\top)$ and $U= \sum_{l=-4}^4 \dot J^l(\theta_0) Z^l_{1}(\theta_0) $. }
\end{theorem}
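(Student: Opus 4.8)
The plan is to establish asymptotic normality of $\hat\theta_n$ via the standard M-estimation (or Z-estimation) route, adapted to the fact that $S_n$ is a U-statistic of order two. Since $\hat\theta_n$ minimizes $S_n$ over $\Theta$ and (by Theorem~\ref{consistance}, combined with the restriction $\Theta\subset(0,\frac12)\times[0,\pi)\times[0,\pi)$) we have $\hat\theta_n\to\theta_0$ in probability, I would argue that with probability tending to one $\hat\theta_n$ lies in the interior of $\Theta$, so that the first-order condition $\dot S_n(\hat\theta_n)=0$ holds. A Taylor expansion of $\dot S_n$ around $\theta_0$ then gives
$$
0=\dot S_n(\hat\theta_n)=\dot S_n(\theta_0)+\ddot S_n(\bar\theta_n)(\hat\theta_n-\theta_0)
$$
for some $\bar\theta_n$ between $\hat\theta_n$ and $\theta_0$ (coordinatewise). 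Rearranging yields $\sqrt n(\hat\theta_n-\theta_0)=-\ddot S_n(\bar\theta_n)^{-1}\sqrt n\,\dot S_n(\theta_0)$, and the whole proof reduces to three ingredients: a central limit theorem for $\sqrt n\,\dot S_n(\theta_0)$, a uniform law of large numbers showing $\ddot S_n(\bar\theta_n)\to\mathcal A=\ddot S(\theta_0)$, and the invertibility of $\mathcal A$, which is assumed.

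The first ingredient is the crux. Differentiating $S_n(\theta)=\frac{1}{n(n-1)}\sum_{l=-4}^4\sum_{k\neq j}Z_k^l(\theta)Z_j^l(\theta)$ and evaluating at $\theta_0$ gives a U-statistic of order two with symmetric kernel $h(X_k,X_j)=\sum_{l}\bigl(Z_k^l\dot Z_j^l+\dot Z_k^l Z_j^l\bigr)(\theta_0)$. I would apply the Hoeffding decomposition: the projection (first-order) term is the dominant $\frac1n$-contribution, while the degenerate remainder is $O_p(1/n)$ and hence negligible after scaling by $\sqrt n$. Writing $\E_j Z_j^l(\theta_0)=J^l(\theta_0)$, the Hájek projection of $\sqrt n\,\dot S_n(\theta_0)$ is $\frac{2}{\sqrt n}\sum_{k=1}^n\sum_l\dot J^l(\theta_0)Z_k^l(\theta_0)+o_p(1)$, up to a recentering term that vanishes because $J^l(\theta_0)=\Im(\overline{g^{\star l}}M^l(\theta_0))=0$. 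Indeed, $g^{\star l}=M^l(\theta_0)f^{\star l}$ with $f^{\star l}$ real forces $J^l(\theta_0)=|f^{\star l}|\,\Im(|M^l(\theta_0)|^2)=0$ for each $l$, which is exactly the degeneracy that makes the variance come out as $V=4\E(UU^\top)$ with $U=\sum_l\dot J^l(\theta_0)Z_1^l(\theta_0)$. The ordinary i.i.d.\ CLT applied to $\frac{1}{\sqrt n}\sum_k 2U_k$ (where $U_k=\sum_l\dot J^l(\theta_0)Z_k^l(\theta_0)$) then delivers $\sqrt n\,\dot S_n(\theta_0)\overset{d}{\to}\mathcal N(0,V)$.

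For the second ingredient, I would show $\ddot S_n(\bar\theta_n)\to\mathcal A$ in probability. Since $\ddot S_n$ is again a U-statistic (plus its diagonal correction) in $\theta$, and since $\theta\mapsto Z_k^l(\theta)$ and its derivatives are smooth and uniformly bounded (all the $M^l$ and their derivatives are bounded trigonometric polynomials in $\theta$ on the compact $\Theta$), a uniform law of large numbers for U-statistics gives $\sup_{\theta\in\Theta}\|\ddot S_n(\theta)-\ddot S(\theta)\|\to 0$ in probability; combined with $\bar\theta_n\to\theta_0$ and the continuity of $\ddot S$, this yields $\ddot S_n(\bar\theta_n)\to\mathcal A$. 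The computation $\ddot S(\theta_0)=2\sum_l\dot J^l(\theta_0)\dot J^l(\theta_0)^\top$ follows from differentiating $S(\theta)=2\sum_{l\geq1}(J^l(\theta))^2$ twice and dropping the terms carrying a factor $J^l(\theta_0)=0$. Finally, Slutsky's lemma combines the pieces: $\sqrt n(\hat\theta_n-\theta_0)=-\ddot S_n(\bar\theta_n)^{-1}\sqrt n\,\dot S_n(\theta_0)\overset{d}{\to}\mathcal N(0,\mathcal A^{-1}V\mathcal A^{-1})=\mathcal N(0,\Sigma)$.

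The main obstacle I expect is the careful handling of the U-statistic structure at $\theta_0$, specifically verifying that the degeneracy $J^l(\theta_0)=0$ makes the naive Hoeffding projection's leading term vanish, so that the asymptotically dominant contribution comes from the \emph{derivative} factors $\dot J^l(\theta_0)$ rather than from $J^l(\theta_0)$; getting the variance constant $V=4\E(UU^\top)$ exactly right hinges on tracking which cross-terms in the Hoeffding decomposition survive. A secondary technical point is justifying the interior/first-order-condition step uniformly and controlling the remainder in the Taylor expansion, which requires the boundedness and equicontinuity of the second and third derivatives of $S_n$ on $\Theta$ — plausibly supplied by the same Lipschitz-type estimates used in Lemma~\ref{lip} for the consistency proof.
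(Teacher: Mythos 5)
Your proposal is correct and follows essentially the same route as the paper: the Taylor expansion of the first-order condition $\dot S_n(\hat\theta_n)=0$, the CLT for $\sqrt{n}\,\dot S_n(\theta_0)$ via the degeneracy $J^l(\theta_0)=0$ (your Hoeffding/H\'ajek projection is exactly the paper's explicit split $\dot S_n(\theta_0)=A_n+B_n$ with $\sqrt{n}A_n=o_P(1)$ by a second-moment bound on the uncorrelated degenerate part), convergence of $\ddot S_n$ at the intermediate point to $\mathcal{A}$, and Slutsky. The only cosmetic differences are that the paper handles the Hessian by combining a Lipschitz bound on $\ddot S_n$ (uniform in $n$) with a pointwise law of large numbers at $\theta_0$ rather than invoking a uniform LLN over $\Theta$, which amounts to the same argument.
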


The proof can be found in Section~\ref{proof:normasymp}.
Note that $\mathcal{A}$ can  be estimated by $\ddot S(\hat \theta_n)$ and $V$ by 
$$\frac{4}{n^3}\sum_{1\leq k, j,j'\leq n}\sum_{-4\leq l,l'\leq 4} Z_k^{l}(\hat\theta_n)Z_k^{l'}(\hat\theta_n)\dot Z_j^{l}(\hat\theta_n)(\dot Z_{j'}^{l'}(\hat\theta_n))^\top$$
(see details in Section~\ref{sec:estimV}).
Thus we can estimate the covariance matrix $\Sigma$ and deduce an asymptotic confidence region. 

{ 

We also prove the following result on the quadratic risk of the estimator $\hat\theta_n $, which  is useful for the sequel (see Section~\ref{sec:proof1varthetachap}  for a proof).
\begin{proposition}\label{varthetachap}
Under the assumptions of Theorem~\ref{normasymp}, there exists a numerical constant $K$ such that, for all $\theta_0\in \Theta$ and for all $n\geq 1$ 
$$\E\|\hat\theta_n-\theta_0\|^2\leq Kn^{-1},$$
where the norm is the Euclidean norm in $\R^3$.
\end{proposition}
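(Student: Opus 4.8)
The plan is to treat $\hat\theta_n$ as an $M$-estimator and to trade a quadratic lower bound on the contrast (curvature) against a Lipschitz-type increment bound on the centered process $\nu_n:=S_n-S$. The structural fact I would use throughout is that $S$ and $S_n$ depend on $\theta$ only through $M^1(\theta),\dots,M^4(\theta)$, which, together with their $\theta$-gradients, are uniformly bounded on all of $\R^3$. I therefore extend $S$ and $S_n$ to every $\theta\in\R^3$ via the trigonometric formulas, which lets me integrate gradients along straight segments without worrying about the (non-convex) shape of $\Theta$.

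First I would establish the curvature bound. Under Assumption~\ref{hyptheta}, Theorem~\ref{identifiability} gives that $\theta_0$ is the unique minimizer of $S$ on $\Theta$ (since $\theta_0+\pi\notin\Theta$), with $S(\theta_0)=0$. As $S\geq 0$ and $\mathcal A=\ddot S(\theta_0)$ is invertible, $\mathcal A$ is positive definite; a second-order Taylor expansion with continuous Hessian yields $r,c_1>0$ with $S(\theta)\geq c_1\|\theta-\theta_0\|^2$ for $\|\theta-\theta_0\|\leq r$, while compactness and continuity give $\inf_{\|\theta-\theta_0\|\geq r}S(\theta)=c_2>0$. Since $\operatorname{diam}(\Theta)<\infty$, these combine into $S(\theta)\geq c\,\|\theta-\theta_0\|^2$ for all $\theta\in\Theta$. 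Uniformity of $c$ over $\theta_0\in\Theta$ follows from compactness and continuity of $\theta_0\mapsto(\mathcal A,c_2)$, so $c$ depends only on $\Theta$ and $f$, not on $n$ or $\theta_0$.

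Next comes the basic inequality. Since $\hat\theta_n$ minimizes $S_n$ we have $S_n(\hat\theta_n)\leq S_n(\theta_0)=\nu_n(\theta_0)$ (using $S(\theta_0)=0$), hence
$$
c\,\|\hat\theta_n-\theta_0\|^2\leq S(\hat\theta_n)= -\nu_n(\hat\theta_n)+S_n(\hat\theta_n)\leq \nu_n(\theta_0)-\nu_n(\hat\theta_n).
$$
Writing the increment as $\nu_n(\theta_0)-\nu_n(\hat\theta_n)=\int_0^1(\theta_0-\hat\theta_n)^\top\big(\dot S_n-\dot S\big)\big(\hat\theta_n+t(\theta_0-\hat\theta_n)\big)\,dt$ and applying Cauchy--Schwarz gives $c\,\|\hat\theta_n-\theta_0\|^2\leq\|\hat\theta_n-\theta_0\|\,\sup_{\theta}\|\dot S_n(\theta)-\dot S(\theta)\|$, and therefore
$$
\|\hat\theta_n-\theta_0\|^2\leq c^{-2}\,\sup_{\theta\in\R^3}\|\dot S_n(\theta)-\dot S(\theta)\|^2.
$$

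It remains to bound $\E\sup_\theta\|\dot S_n-\dot S\|^2$. Setting $\epsilon_l:=\widehat{g^{\star l}}-g^{\star l}$ and expanding $\tilde S_n-S$ as a sum over $l\in\{1,\dots,4\}$ of terms linear and quadratic in $\epsilon_l$, then differentiating in $\theta$, the boundedness of $M^l$ and $\dot M^l$ together with the $O(1/n)$ diagonal correction $S_n-\tilde S_n$ yield, uniformly in $\theta$, $\|\dot S_n(\theta)-\dot S(\theta)\|\leq C\big(\sum_{l=1}^4(|\epsilon_l|+|\epsilon_l|^2)+n^{-1}\big)$. Since each $\widehat{g^{\star l}}$ is an average of $n$ i.i.d.\ bounded terms, $\E|\epsilon_l|^2=O(1/n)$ and $\E|\epsilon_l|^4=O(1/n^2)$, so $\E\sup_\theta\|\dot S_n-\dot S\|^2=O(1/n)$ and the claim follows with $K$ depending only on $\Theta$ and $f$. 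The \emph{main obstacle} is obtaining the sharp rate $1/n$ rather than the naive $n^{-1/2}$ that a crude bound on $\sup_\theta|\nu_n|$ would give: this forces the increment (Lipschitz) control above, and it is precisely the reduction of $\dot S_n-\dot S$ to a fixed smooth function of only four empirical Fourier coefficients that makes the supremum over $\theta$ harmless and avoids an extra logarithmic factor. The second delicate point is the uniform-in-$\theta_0$ positivity of the curvature constant $c$, which relies on the assumed (uniform) invertibility of $\mathcal A$ and on compactness of $\Theta$.
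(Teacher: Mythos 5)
Your proof is correct in substance, but it takes a genuinely different route from the paper's. The paper recycles the proof of Theorem~\ref{normasymp}: from the first-order condition it writes $\ddot S_n(\theta_n^*)(\hat\theta_n-\theta_0)=-\dot S_n(\theta_0)$, bounds $\E\|\dot S_n(\theta_0)\|^2=O(1/n)$ by splitting $\dot S_n(\theta_0)$ into an uncorrelated U-statistic part and an i.i.d.\ centered part, and then must control $\|\ddot S_n(\theta_n^*)^{-1}\|_{op}$; for this it upgrades both $\ddot S_n\to\ddot S$ and the consistency of $\hat\theta_n$ to almost-sure convergences (strong law for uncorrelated variables plus a rerun of Lemma~\ref{lem:convmin}) before invoking invertibility of $\mathcal{A}$. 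You never touch the random Hessian: you extract a deterministic curvature bound $S(\theta)\geq c\|\theta-\theta_0\|^2$ on $\Theta$ from $S\geq 0$, $S(\theta_0)=0$, positive definiteness of $\mathcal{A}$ and compactness, combine it with the basic inequality $S_n(\hat\theta_n)\leq S_n(\theta_0)$, and control the increment of $\nu_n=S_n-S$ via $\sup_\theta\|\dot S_n-\dot S\|$, whose second moment is $O(1/n)$ because $S_n-S$ is a fixed smooth function of the four empirical Fourier coefficients (plus an $O(1/n)$ diagonal correction). What each buys: the paper's argument is the natural continuation of the asymptotic-normality proof and reuses its computations, but its final expectation step rests on an almost-sure Hessian bound valid only for $n\geq n_1$ with $n_1$ random, a delicacy your argument avoids entirely --- your bound is nonasymptotic and valid for every $n\geq 1$, which is exactly what the proposition asserts. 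Two small repairs to your write-up are needed: $M^l$ and $\dot M^l$ are \emph{not} bounded on all of $\R^3$ (they grow linearly in $|p|$), so the supremum should be taken over the convex hull of $\Theta$, contained in $[0,1/2]\times[0,\pi]\times[0,\pi]$, which is where the segments joining $\hat\theta_n$ to $\theta_0$ actually live; and the uniformity in $\theta_0$ of your curvature constant $c$ requires $\mathcal{A}(\theta_0)$ to be invertible for \emph{every} $\theta_0\in\Theta$ together with continuity of $\theta_0\mapsto\mathcal{A}(\theta_0)$ --- the same implicit strengthening the paper makes when it asserts $\sup_{\theta\in\Theta}C(\theta)<\infty$.
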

}

\section{Nonparametric part}

Let us now estimate the nonparametric part. We shall use the following norm: for any function $\phi$, we denote $\|\phi\|_2=\left(\frac{1}{2\pi}\int_{\S^1} \phi^2(x)dx\right)^{1/2}.$ 
Recall that for all $l\in \Z$, $g^{\star l }=M^l(\theta_0)f^{\star l}$
 where $g$ is the density of the observations $X_k$ and $g^{\star l }$ its Fourier coefficient.
Then $f^{\star l}=g^{\star l }/M^l(\theta_0)$. We can verify that $M^{l}(\theta_0)\neq 0$. Indeed, for any $\theta\in \Theta$, 
$$|M^l(\theta)|^2=p^2+(1-p)^2+2p(1-p)\cos[l(\beta-\alpha)]\geq (1-2p)^2>0.$$
Nevertheless this division by $M^l(\theta_0)$  requires us to impose a new assumption. We assume that there exists $P\in (0,1/2)$ such that $0<p<P$ for any $p$, i. e. 
\begin{assumption}\label{hypthetabis} $\Theta$ is a compact set included in 
$$\left\{(p, \alpha, \beta)\in \left(0, P\right)\times [0,\pi)\times [0,\pi), \quad \alpha\neq \beta  \pmod{2\pi/3}\right\}.$$
\end{assumption}
Under this assumption, $|M^l(\theta)|$ is always bounded from below  by $1-2P$.
Now, to estimate $g^{\star l }=\int_{\S^1}e^{-il x}g(x)dx/(2
\pi)$, it is natural to  define
$$\widehat{g^{\star l}}=\frac1{2\pi n}\sum_{k=1}^n e^{-ilX_k}.$$
If $\hat \theta=\hat \theta_n$ is the previous estimator of the parametric part, we set  the plugin estimator of the Fourier coefficient:
$$\widehat{f^{\star l}}=\frac1{2\pi n}\sum_{k=1}^n M^{l}(\hat \theta)^{-1}e^{-ilX_k}.$$
Finally, for 
{$L$ an integer}, set
$$\hat f_L(x)=\sum_{l=-L}^L\widehat{f^{\star l}}e^{ilx}. $$

To measure the performance of this estimator, we use Parseval equality to write
$$\|f-\hat f_L\|_2^2=\sum_{|l|>L} |f^{\star l}|^2+\sum_{l=-L}^L |f^{\star l}-\widehat{f^{\star l}}|^2$$
which is the classical bias variance decomposition.
Moreover it is possible to prove that the variance term satisfy $\sum_{l=-L}^L \E|f^{\star l}-\widehat{f^{\star l}}|^2=O(\frac{2L+1}{n})$ (see Lemma~\ref{majnun} below).
 To control the bias term we recall %
 {the definition of the 
Sobolev ellipsoid:
$$W(s, R)=\{ f: \S^1\to \R,\quad  \sum_{l\in \Z} (1+l^2)^{s} |f^{\star l}|^2 \leq R^2\}.$$ }

For such a smooth $f$, 
the risk of estimator $\hat f_L$ is then bounded in the following way:
$$\E\|f-\hat f_L\|_2^2\leq R^2\left(1+L^2\right)^{-s}+ C\frac{2L+1}{n}. $$
It is clear that an optimal value for $L$ is of order $n^{1/(2s+1)}$ but this value is unknown. We rather choose a data-driven method to select $L$. 
We introduce a classical minimization of a penalized empirical risk. Set
\begin{equation}\label{resolution_level}
\widehat{L}=\underset{L\in \mathcal{L}}{\argmin} \left\{-\sum_{l=-L}^L | \widehat{f^{\star l}}|^2+\lambda \frac{2L+1}{n}\right\}
\end{equation}
where $\mathcal{L}$ is a finite set of resolution level, and $\lambda$ a constant to be specified later.
The next theorem states an oracle inequality which highlights the bias variance decomposition of the quadratic risk and  justifies our estimation procedure.
\begin{theorem}\label{oracle}
\label{oracleineq}
Assume Assumption \ref{hypf} and Assumption \ref{hypthetabis} . 
Assume that $f$ belongs to the Sobolev ellipsoid $W(s, R)$ with 
{  $s \geq 1$. Let $\hat L$ defined  in \eqref{resolution_level} with 
$\mathcal{L}=\{{0},1, \dots, \lfloor cn^{\frac1{2s_0+1}} \rfloor \}$ for some $s_0>1$ and some positive constant $c$.}
Let $\epsilon>0$.
If the penalty constant verifies 
$\lambda > {(3/\pi^2)}(1+\epsilon^{-1})(1-2P)^{-2}$
then, 
$$\E\|\hat f_{\widehat{L}}-f\|_2^2\leq (1+2\epsilon)\E\min_{L\in \mathcal{L}} \left\{\|\hat f_{ L}-f\|_2^2+2\lambda \frac{2L+1}{n}\right\}+\frac{C {(1+R^2)}}{n}$$
where $C$ is  a positive constant depending on 
{ 
 $P,s_0,c,\epsilon,\lambda$.
 Moreover, if $s\geq s_0$,
$$\sup_{f\in W(s, R)}\sup_{\theta_0\in \Theta}\E_{\theta_0,f}\|\hat f_{\widehat{L}}-f\|_2^2=O\left(R^2
n ^{-2s/(2s+1)}\right).$$
}
\end{theorem}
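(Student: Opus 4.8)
The plan is to prove the oracle inequality first, then deduce the minimax rate as a corollary. The overall strategy follows the standard penalized model-selection approach, but with a key twist: the estimator $\widehat{f^{\star l}}$ uses the plug-in $M^l(\hat\theta)^{-1}$ rather than $M^l(\theta_0)^{-1}$, so I must control the additional error coming from estimation of $\theta_0$ using Proposition~\ref{varthetachap}.

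First I would fix notation and set up the comparison. For any $L$, write the empirical criterion $\mathrm{crit}(L)=-\sum_{|l|\le L}|\widehat{f^{\star l}}|^2+\lambda\frac{2L+1}{n}$, and use the identity $\|\hat f_L-f\|_2^2=\sum_{|l|>L}|f^{\star l}|^2+\sum_{|l|\le L}|\widehat{f^{\star l}}-f^{\star l}|^2$ together with $-\sum_{|l|\le L}|\widehat{f^{\star l}}|^2=\|\hat f_L-f\|_2^2-\|f\|_2^2-2\sum_{|l|\le L}\mathrm{Re}(\overline{f^{\star l}}(\widehat{f^{\star l}}-f^{\star l}))$. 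Since $\hat L$ minimizes $\mathrm{crit}$, comparing $\mathrm{crit}(\hat L)\le \mathrm{crit}(L)$ for an arbitrary $L$ yields, after cancellation of $\|f\|_2^2$,
\begin{equation*}
\|\hat f_{\hat L}-f\|_2^2\le \|\hat f_L-f\|_2^2+\lambda\frac{2L+1}{n}-\lambda\frac{2\hat L+1}{n}+2\nu(\hat L)-2\nu(L),
\end{equation*}
where $\nu(L)=\sum_{|l|\le L}\mathrm{Re}(\overline{f^{\star l}}(\widehat{f^{\star l}}-f^{\star l}))$ is the centered linear empirical process. The heart of the argument is to absorb the fluctuation terms $2\nu(\hat L)-2\nu(L)$ into the penalty using the elementary bound $2ab\le \epsilon a^2+\epsilon^{-1}b^2$, so that the random part contributes a term comparable to $\epsilon^{-1}\sum_{|l|\le \hat L\vee L}\E|\widehat{f^{\star l}}-f^{\star l}|^2$, which is controlled by the variance estimate $O(\frac{2L+1}{n})$ from Lemma~\ref{majnun}.

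The main obstacle, and where the plug-in structure forces extra work, is decomposing $\widehat{f^{\star l}}-f^{\star l}$. I would split it as
\begin{equation*}
\widehat{f^{\star l}}-f^{\star l}=M^l(\hat\theta)^{-1}\big(\widehat{g^{\star l}}-g^{\star l}\big)+\big(M^l(\hat\theta)^{-1}-M^l(\theta_0)^{-1}\big)g^{\star l}.
\end{equation*}
The first term is the genuine stochastic fluctuation, handled by a Bernstein or Hoeffding argument on the bounded variables $e^{-ilX_k}$ (uniformly in $l$ via $|M^l(\hat\theta)|\ge 1-2P$); this is what pins down the penalty threshold $\lambda>(3/\pi^2)(1+\epsilon^{-1})(1-2P)^{-2}$, the constant $3/\pi^2$ coming from the variance of the real/imaginary parts of $e^{-ilX}/(2\pi)$ and the cardinality bound on $\mathcal L$. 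The second term is the parametric error: since $\theta\mapsto M^l(\theta)^{-1}$ is Lipschitz on the compact $\Theta$ (again using the lower bound $1-2P$) and $|l|\le L$ is bounded by the oracle range, I get $|M^l(\hat\theta)^{-1}-M^l(\theta_0)^{-1}|\le C_L\|\hat\theta-\theta_0\|$, and summing over the at most $2L+1$ frequencies and taking expectations, Proposition~\ref{varthetachap} gives a contribution of order $\frac{L^2}{n}$ times lower-order factors. Care is needed to check this parametric remainder is genuinely $O(n^{-1})$ up to the $(1+R^2)$ factor rather than polluting the leading rate; this is where I expect the most delicate bookkeeping, since $L$ ranges up to $n^{1/(2s_0+1)}$ and one must verify that $\E[\|\hat\theta-\theta_0\|^2\cdot(\text{frequency sums})]$ stays controlled, possibly invoking boundedness of the relevant quantities on the compact parameter set together with the $Kn^{-1}$ bound.

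Finally, to obtain the minimax rate I would specialize the oracle inequality. Inserting the deterministic risk bound $\E\|\hat f_L-f\|_2^2\le R^2(1+L^2)^{-s}+C\frac{2L+1}{n}$ into the right-hand side and minimizing $\min_L\{R^2(1+L^2)^{-s}+C'\frac{2L+1}{n}\}$ over the grid $\mathcal L$, the balance is achieved at $L^\star\asymp n^{1/(2s+1)}$, which lies in $\mathcal L$ precisely because $s\ge s_0$ guarantees $n^{1/(2s+1)}\le n^{1/(2s_0+1)}\le cn^{1/(2s_0+1)}$ for large $n$. This yields the bias-variance tradeoff of order $R^2 n^{-2s/(2s+1)}$, and the residual $\frac{C(1+R^2)}{n}$ term is of smaller order, giving the claimed $O(R^2 n^{-2s/(2s+1)})$ uniformly over $W(s,R)$ and $\theta_0\in\Theta$.
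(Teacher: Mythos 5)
Your proposal has the right architecture (model-selection comparison, plug-in decomposition, rate via bias–variance tradeoff), but it has two genuine gaps at exactly the hard points, plus one algebra slip. The central gap is the concentration step. You absorb the fluctuation $2\nu(\hat L)-2\nu(L)$ into a quantity "comparable to $\epsilon^{-1}\sum_{|l|\le \hat L\vee L}\E|\widehat{f^{\star l}}-f^{\star l}|^2$, controlled by the variance estimate $O((2L+1)/n)$". This cannot work: $\hat L$ is random (as large as $L_n\asymp n^{1/(2s_0+1)}$), so you cannot push an expectation inside a sum with a random range, and the penalty is of exactly the same order as that expectation, so an in-mean bound leaves nothing over to produce the $O((1+R^2)/n)$ remainder. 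What the argument requires --- and what Lemma~\ref{majnun} actually states --- is a deviation bound uniform over the collection,
$$\E\max_{L'\in\mathcal{L}}\Bigl(\sup_{t\in B_{L'}}|\nu_n(t)|^2-\tfrac{\kappa}{(1-2P)^2}\tfrac{2L'+1}{n}\Bigr)_+\leq \frac{C(1+R^2)}{n},$$
which the paper obtains from Talagrand's inequality for suprema of empirical processes over the unit balls $B_{L'}$. Your substitute, coordinate-wise Bernstein/Hoeffding on the $e^{-ilX_k}$, does not apply to the quadratic statistic $\sum_{|l|\le L'}|\widehat{g^{\star l}}-g^{\star l}|^2$ (it is not a sum of independent terms), and patching it with a union bound over the $2L_n+1$ frequencies inflates the bound by a $\log n$ factor, so the stated inequality with a constant $\lambda>(3/\pi^2)(1+\epsilon^{-1})(1-2P)^{-2}$ would not follow. (The algebra slip: the correct identity is $-\sum_{|l|\le L}|\widehat{f^{\star l}}|^2=\|\hat f_L-f\|_2^2-\|f\|_2^2-2\sum_{|l|\le L}\mathrm{Re}\bigl(\overline{\widehat{f^{\star l}}}(\widehat{f^{\star l}}-f^{\star l})\bigr)$; with $\overline{f^{\star l}}$ in place of $\overline{\widehat{f^{\star l}}}$, as you wrote it, the two sides differ by $2\sum_{|l|\le L}|\widehat{f^{\star l}}-f^{\star l}|^2$.)

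The second gap is the parametric remainder, which you flag as "delicate bookkeeping" but leave unresolved --- and your stated bound fails: a contribution of order $L^2/n$ with $L$ up to $n^{1/(2s_0+1)}$ is $n^{2/(2s_0+1)-1}$, not $O(1/n)$, so it would pollute the oracle inequality's remainder. The fix is precisely why the theorem assumes $s\geq 1$, a hypothesis your argument never uses: in the term $(M^l(\hat\theta)^{-1}-M^l(\theta_0)^{-1})g^{\star l}$ one has $|g^{\star l}|\leq|f^{\star l}|$ and $|M^l(\hat\theta)^{-1}-M^l(\theta_0)^{-1}|\leq 2(1-2P)^{-2}|l|\,\|\hat\theta-\theta_0\|_1$, so the frequency sum is $\sum_{|l|\le L}l^2|f^{\star l}|^2\le R^2$ \emph{uniformly in} $L$ by the Sobolev assumption, and Proposition~\ref{varthetachap} then gives an expected contribution $O(R^2/n)$. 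I do note one point in your favor: your two-term decomposition, which bounds $|M^l(\hat\theta)|^{-1}\le(1-2P)^{-1}$ uniformly so that the stochastic piece reduces to $\sum_{|l|\le L'}|\widehat{g^{\star l}}-g^{\star l}|^2$ (a statistic free of $\hat\theta$), is genuinely simpler than the paper's three-term split $\nu_{n,1}+\nu_{n,2}+\nu_{n,3}$, whose cross term $\nu_{n,2}$ needs an extra H\"older/Rosenthal argument and the cardinality constraint on $\mathcal{L}$; but this advantage only materializes once the two gaps above are filled.
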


In consequence our estimator has a quadratic risk in  $n ^{-2s/(2s+1)}$.
{ 
Regarding the lower bound note that for any estimator $\tilde f_n$
$$\sup_{f\in W(s, R)}\sup_{\theta_0\in \Theta}\E_{\theta_0,f}\| \tilde f_n-f\|_2^2\geq 
\sup_{f\in W(s, R)}\E_{\theta,f}\|\tilde f_n-f\|_2^2$$
for some arbitrary $\theta\in \Theta$,
so that the problem is reduced to a purely nonparametric lower bound.
In the case of direct observations  this quantity is lower bounded by $\underline{C}n ^{-2s/(2s+1)}$, see Theorem 11 and its proof in  \cite{baldikerk2009} (case $d=1$ for the circle $\S^1$).
We can use this proof to prove the lower bound in our mixture case. Indeed, for any densities $f_1$ and $f_2$, if 
$g_i(x)=pf_i(x-\alpha)+(1-p)f_i(x-\beta)$ is the associated density of our observations, then the Kullback-Leibler divergence verifies
$$K(g_1dx,g_2dx)\leq \int \frac{(g_1-g_2)^2}{g_2}\leq 2 \int \frac{(f_1-f_2)^2}{f_2}$$
and the rest of the proof is identical. Thus 
$$\sup_{f\in W(s, R)}\sup_{\theta_0\in \Theta}\E_{\theta_0,f}\|\hat f_{\widehat{L}}-f\|_2^2\geq \underline{C}
n ^{-2s/(2s+1)}$$
and our estimator is optimal minimax. 
}

\begin{remark} 
 Note that the penalty only depends on $P$ which is some safety margin around  1/2, that can be chosen by the statistician. For the practical choice of the penalty, see Section \ref{section:simus}.
\end{remark}

 Eventually, note that some densities may be \textit{supersmooth}, in the following sense:
$$\sum_{l\in \Z} \exp(2b|l|^r)|f^{\star l}|^2 \leq R^2. $$
In this case, the quadratic bias is bounded by $R^2\exp(-2bL^r)$ which gives the following fast rate of convergence:
$$\E\|\hat f_{\widehat{L}}-f\|_2^2=O\left(\frac{(\log n)^{1/r}}{n}\right).$$

\section{Numerical results}
\label{section:simus}

All computations are performed with Matlab software and the Optimization Toolbox.

We shall implement our statistical procedure to both estimate the parameter $\theta_0$ and the density $f$.   We consider three popular circular densities, namely the von Mises density, the wrapped Cauchy and the wrapped normal densities. We remind their expression (see \cite{LeyVerdebout}).
The von Mises density is given by:
$$
f_{VM}(x)=\frac{1}{2\pi I_{0}(\kappa) } e^{\kappa \cos(x-\mu)},
$$
with $\kappa \geq 0$, $I_0(\kappa)$ the modified Bessel function of the first kind and of order $0$.
The wrapped Cauchy distribution has density:
$$
f_{WC}(x)= \frac{1}{2\pi} \frac{1-\gamma^2}{1+ \gamma^2 -2 \gamma\cos(x-\mu)},
$$ 
with $0 \leq \gamma \leq 1.$
The wrapped normal density expression is:
$$
f_{WN}(x)=\frac{1}{\sigma\sqrt{2\pi}}\sum_{k=-\infty}^\infty e^{- \frac{(x-\mu+2k\pi)^2}{2\sigma^2}},
$$
$\sigma>0$. For more clarity, we set $\sigma^2=:-2 \log (\rho).$ Hence, we have $0\leq \rho\leq 1$.

All these densities are characterized by a concentration parameter $\kappa$, $\gamma$ or $\rho$ and a location parameter $\mu$. Remind that values $\kappa=0$, $\gamma=0$ and $\rho=0$ correspond to the uniform density on the circle. To meet symmetry assumptions of Theorem \ref{identifiability}, we consider in the sequel that the location parameter is set to $\mu=0$. 
\medskip

First, let us focus on the parametric part. We set $\theta_0= (p_0, \alpha_0, \beta_0)=(\frac 1 4, \frac \pi 8, \frac{2\pi}{3})$. Obtaining the estimate $\hat \theta_n$ of $\theta_0$ (see (\ref{estimator_theta})) requires to solve a nonlinear minimization problem. To this end, we resort to the function \textit{fmincon} of the Matlab Optimization toolbox. The function \textit{fmincon}  finds a constrained minimum of a function of several variables. Two parameters are to be specified: the domain over which the minimum is searched and an initial value. We consider the domain $\{ (0,\frac 1 2) \times [0, \pi) \times [0, \pi) \}$. For more stability and to avoid possible local minimums, we perform the procedure over $10$ initials values uniformly drawn on  $\{ (0,\frac 1 2) \times [0, \pi) \times [0, \pi) \}$. The final estimator $\hat \theta_n$ corresponds to the minimum value of the empirical contrast $S_n(\theta)$ given in (\ref{Sn}) over the $10$ trials.

Table \ref{MISE-theta} gathers mean squared errors for our estimation procedure. When analyzing Table \ref{MISE-theta}, one clearly sees that increasing the number of observations improves noticeably the performances. As expected, von Mises densities with smaller concentration parameter are more difficult to estimate. Nonetheless, the overall performances are satisfying. Table \ref{MISE-theta-Spurr} displays the performances of the method-of-moments estimation procedure developed by \cite{SpurrKout} to handle the problem of estimating the parameters in mixtures of von Mises distributions. 
To fairly compare the two methods, Table \ref{MISE-theta-Spurr-2} gives the \cite{SpurrKout} performances but this time when estimating on the same domain than ours e.g $\{ (0,\frac 1 2) \times [0, \pi) \times [0, \pi) \}$. At closer inspection, the \cite{SpurrKout} method seems to behave better to estimate angles $\alpha_0$ and $\beta_0$ while our method may appear more competitive for estimating $p_0$. It is worth noticing that the method by \cite{SpurrKout} is completely parametric and takes advantage of the knowledge of the distributions. In this regard, our procedure which is semiparametric is competitive with a parametric method.

\medskip

Figure \ref{histogrammes} illustrates the asymptotic normality of our estimator $\hat \theta_n$ stated in Theorem \ref{normasymp}.

\bigskip

\begin{table}[!h]  \begin{center}  \begin{tabular}{c |c c c| c c c}
density  & \multicolumn{3}{c}{$n=100$ } & \multicolumn{3}{c}{$n=1000$}\\
&  $p$  &  $\alpha$  &  $\beta$ &  $p$ &  $\alpha$ &  $\beta$ \\
\hline
$f_{VM}$, $\kappa=2$  & 0.0121    & 0.6848   &   0.1131    & 0.0017   &  0.1919  &  0.0238 \\
$f_{VM}$, $\kappa=5$     &  0.0030&   0.0285 &  0.0049  &  1.4632e-04 & 0.0017 & 4.4861e-04  \\
$f_{VM}$, $\kappa=7$     & 0.0033  & 0.0133  & 0.0031  & 1.6721e-04  & 0.0013 & 3.0102e-04 \\
$f_{WC}$, $\rho=0.8$ & 0.0029 & 0.0124 &0.0024  & 2.0788e-04 & 8.5435e-04  &  1.8942e-04  \\ 
$f_{WN}$, $\rho=0.8$  &0.0077  &  0.1679  &  0.0457& 0.0020 & 0.0238&  0.0037 
\end{tabular}
\caption{Mean squarred errors for estimating parameter $\theta_0$ over $50$ Monte Carlo replications.}
\label{MISE-theta}
\end{center}\end{table}

\begin{table}[!h]  \begin{center}  \begin{tabular}{c |c c c| c c c}
density  & \multicolumn{3}{c}{$n=100$ } & \multicolumn{3}{c}{$n=1000$}\\
&  $p$  &  $\alpha$  &  $\beta$ &  $p$ &  $\alpha$ &  $\beta$ \\
\hline
$f_{VM}$, $\kappa=2$  &   0.0938   & 0.4212  &   0.1171     & 0.0116   &   0.0685 & 0.0062  \\
$f_{VM}$, $\kappa=5$     & 0.0031   & 0.0360    & 0.0049  & 2.9965e-04  & 0.0025  & 6.6273e-04   \\
$f_{VM}$, $\kappa=7$     &  0.0031   &0.0084  &  0.0029  & 2.4553e-04  &  0.0014  &3.5541e-04   \\
\end{tabular}
\caption{Spurr and Koutbeiy procedure: mean squared errors for estimating parameter $\theta_0$ over $50$ Monte Carlo replications on $\{ (0,1) \times [0, 2\pi) \times [0, 2\pi) \}$}
\label{MISE-theta-Spurr}
\end{center}\end{table}

\begin{table}[!h]  \begin{center}  \begin{tabular}{c |c c c| c c c}
density  & \multicolumn{3}{c}{$n=100$ } & \multicolumn{3}{c}{$n=1000$}\\
&  $p$  &  $\alpha$  &  $\beta$ &  $p$ &  $\alpha$ &  $\beta$ \\
\hline
$f_{VM}$, $\kappa=2$  & 0.0231    &  0.2117  &   0.0351     & 0.0112  &0.0635    & 0.0081   \\
$f_{VM}$, $\kappa=5$     & 0.0032   &  0.0409  & 0.0042  &  4.1489e-04 & 0.0022  & 6.3122e-04  \\
$f_{VM}$, $\kappa=7$     &  0.0026   &  0.0094 & 0.0029   & 2.3197e-04  & 0.0010   & 2.8350e-04  \\
\end{tabular}
\caption{Spurr and Koutbeiy procedure: mean squared errors for estimating parameter $\theta_0$ over $50$ Monte Carlo replications on $\{ (0,\frac 1 2) \times [0, \pi) \times [0, \pi) \}$}
\label{MISE-theta-Spurr-2}
\end{center}\end{table}

\begin{figure}[!h]\begin{center}
\begin{tabular}{cc}
$\alpha$ & $\beta$ \\ 
\includegraphics[scale=0.23]{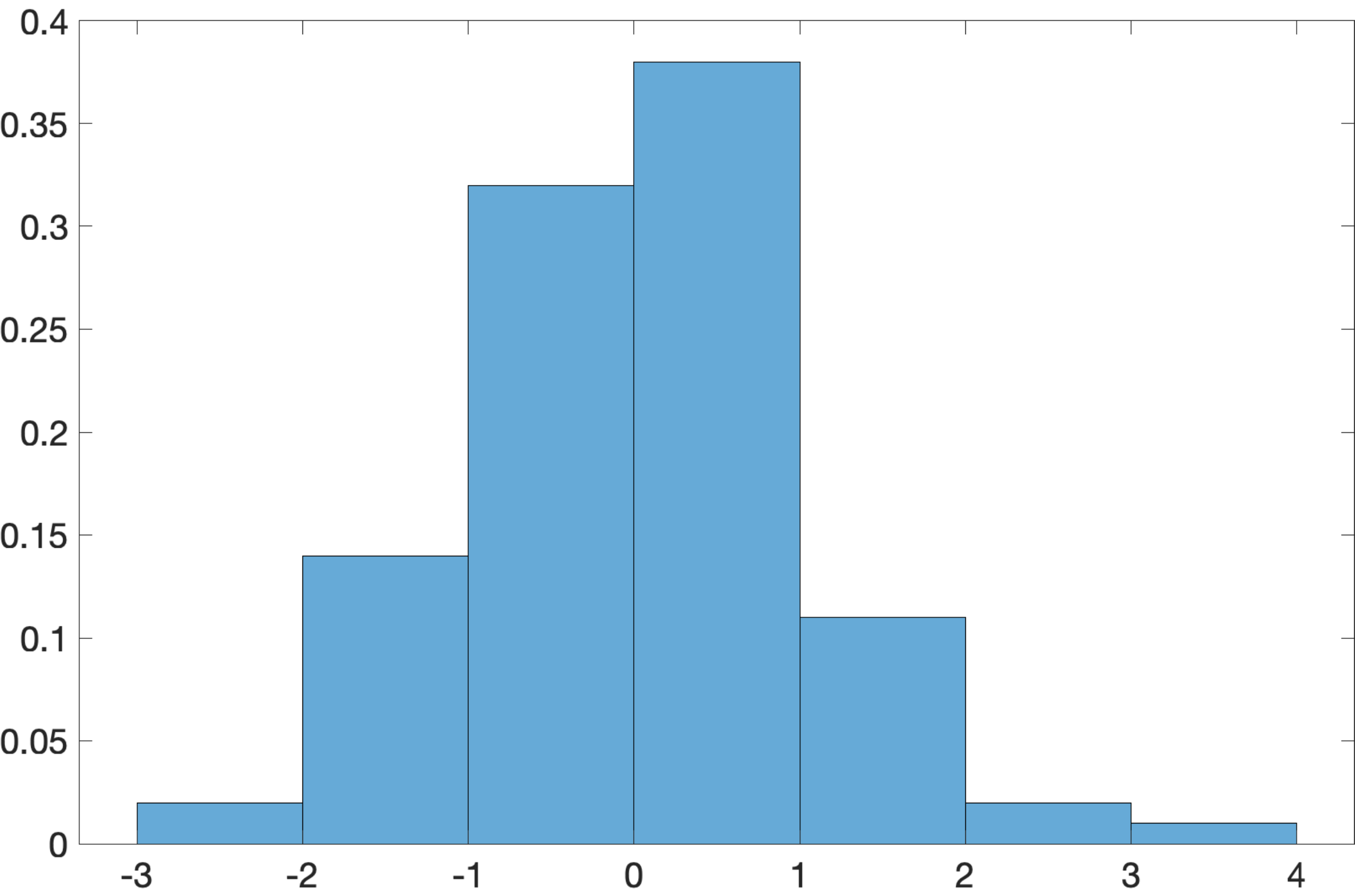} & \includegraphics[scale=0.23]{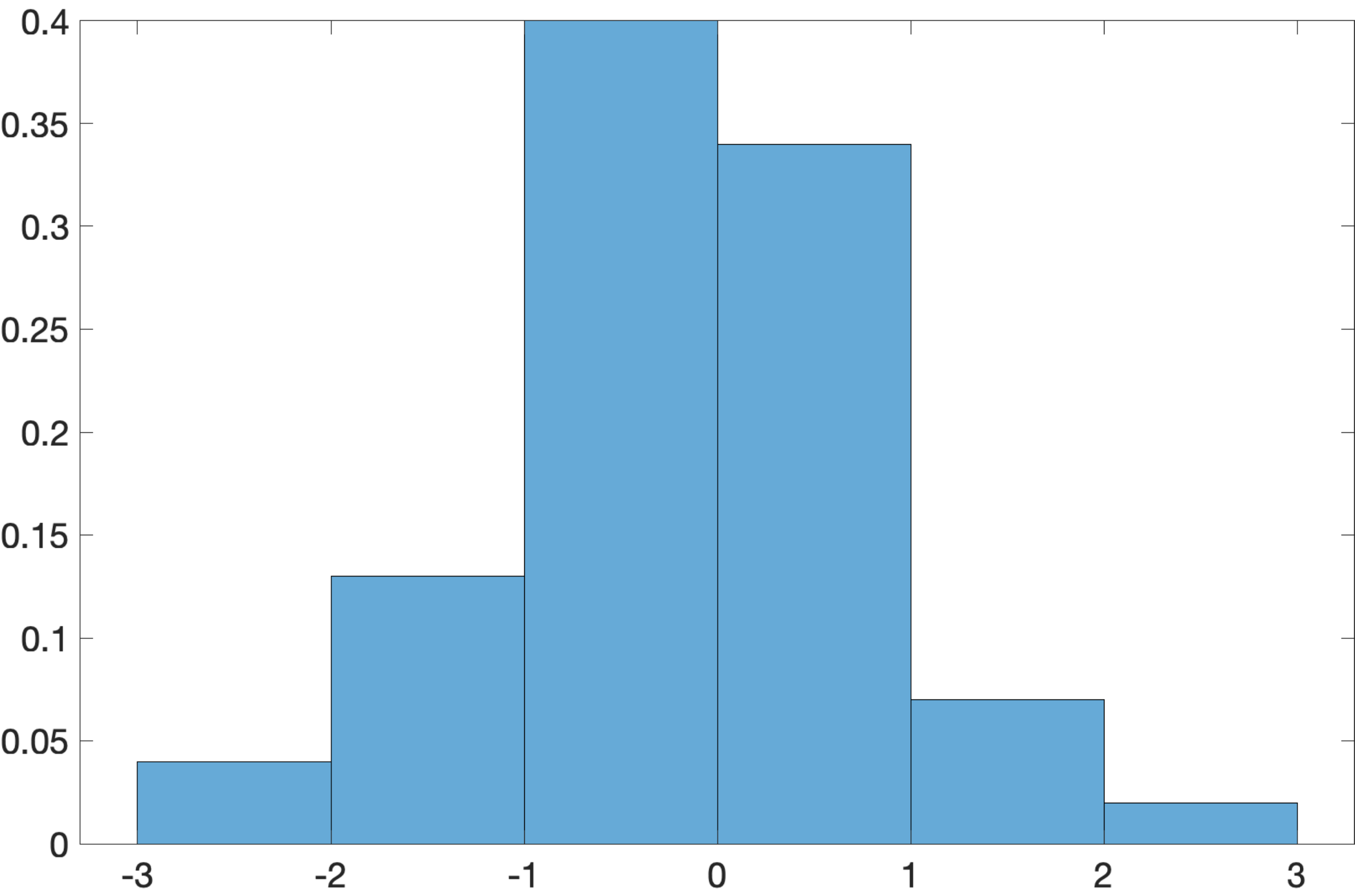}
 \end{tabular}
 $p$\\
 \includegraphics[scale=0.23]{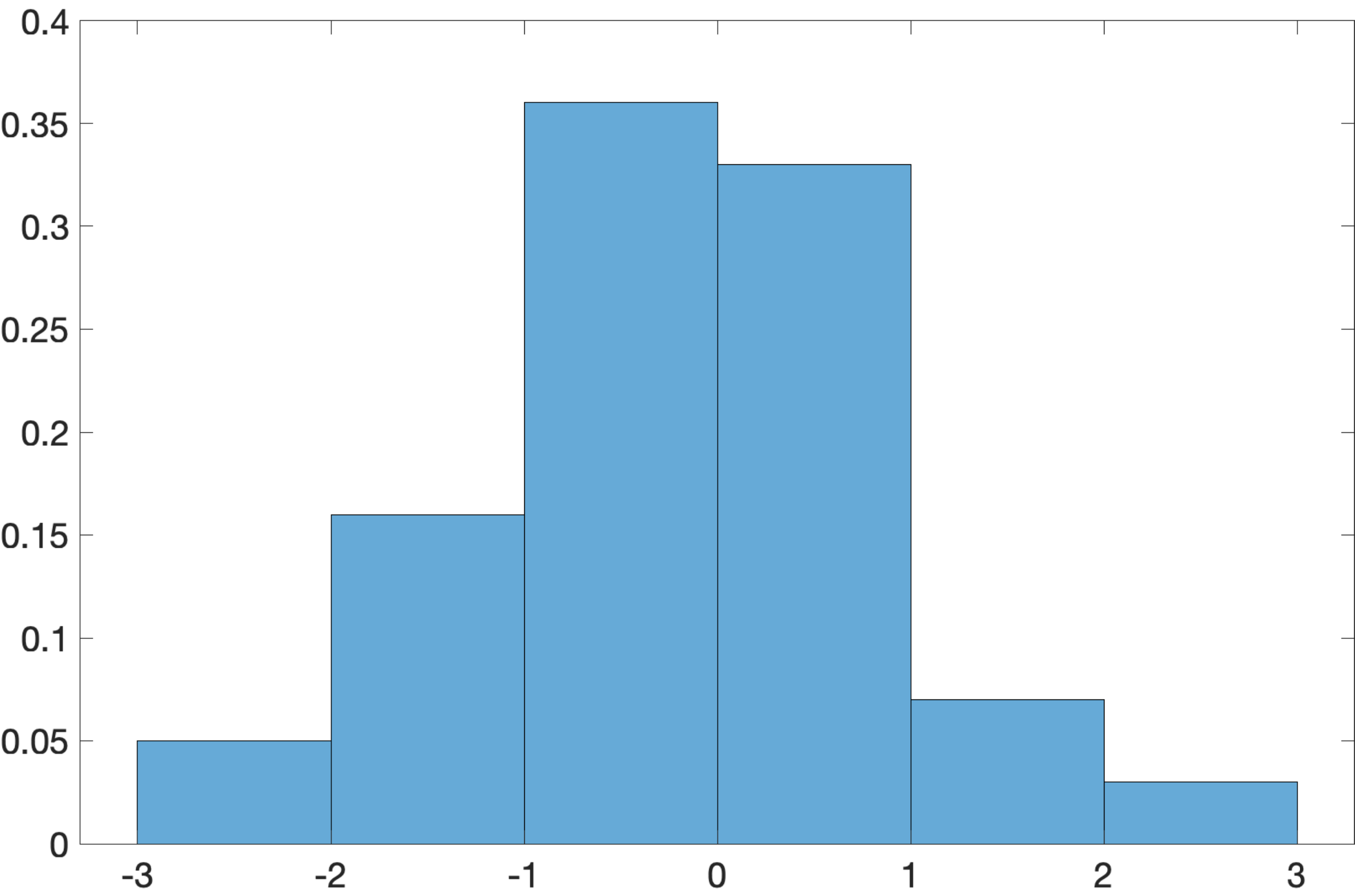}
\caption{Histograms of the centered and standardized statistics $\hat \theta_n$  for the von Mises density $f_{VM}$ with $\kappa=5$, $n=1000$ observations and $100$ Monte Carlo replications}
 \label{histogrammes}
\end{center}\end{figure}

Now, let us turn to the nonparametric estimation part namely the estimation of the density $f$. The estimator of $f$ is given by $\hat f_{\widehat{L}}$ (see Theorem \ref{oracleineq}). It requires the computation of a data-driven resolution level  choice $\widehat{L}$  (given in (\ref{resolution_level})) which implies a tuning parameter $\lambda$. To select the proper $\lambda$, we follow the data-driven slope estimation approach due to Birg\'e and Massart (see \cite{BirgeMassart2001} and \cite{BirgeMassart2007}). An overview in practice is presented in \cite{Baudry}. To implement the slope heuristics method, one has to plot for ${L=0}$ to $L_{\max}$ the couples of points $(\frac{2L+1}{n} , \sum_{l=-L}^L | \widehat{f^{\star l}}|^2)$.  For $L \geq L_0$, one should observe  a linear behaviour (see Figure \ref{slope}). Then, once the slope is estimated,  say $a$, by a linear regression method, one eventually takes $\hat \lambda = 2a$ and the final resolution level is:
$$\widehat{L}=\underset{L\in \mathcal{L}}{\argmin} \left\{-\sum_{l=-L}^L | \widehat{f^{\star l}}|^2+\hat \lambda \frac{2L+1}{n}\right\}.$$

\begin{figure}[!h]\begin{center}
\begin{tabular}{c}
\includegraphics[scale=0.35]{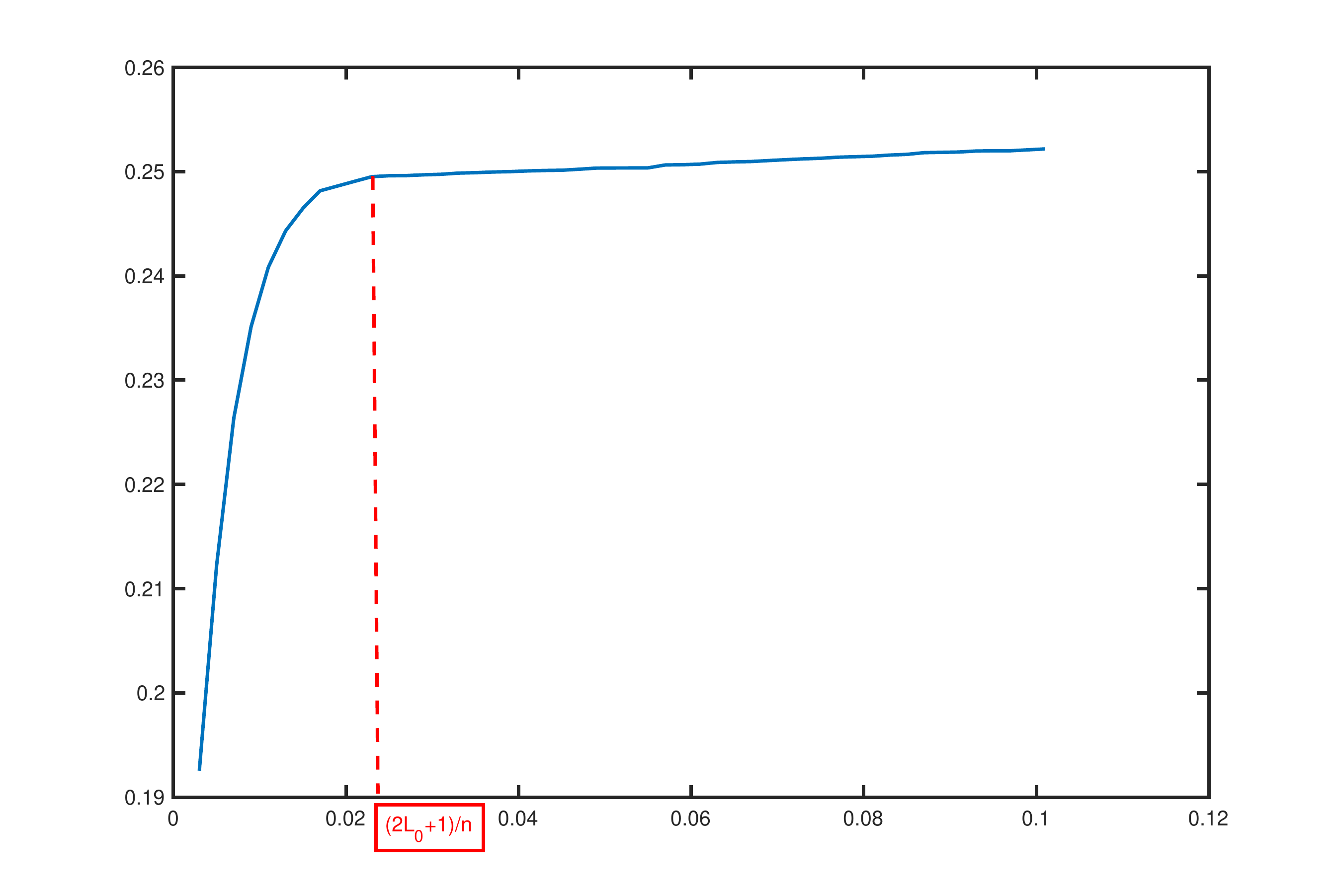} 
\end{tabular}
\caption{For the wrapped Cauchy density $f_{WC}$ with $\gamma=0.8$ and $n=1000$: plot of couples $(\frac{2L+1}{n}, \sum_{l=-L}^L | \widehat{f^{\star l}}|^2)$  for $L=\{1,\dots, 50 \} $.}
 \label{slope}
\end{center}\end{figure}

\begin{figure}[!h]\begin{center}
\begin{tabular}{cc}
 $f$ &  $g$ \\ 
\includegraphics[scale=0.23]{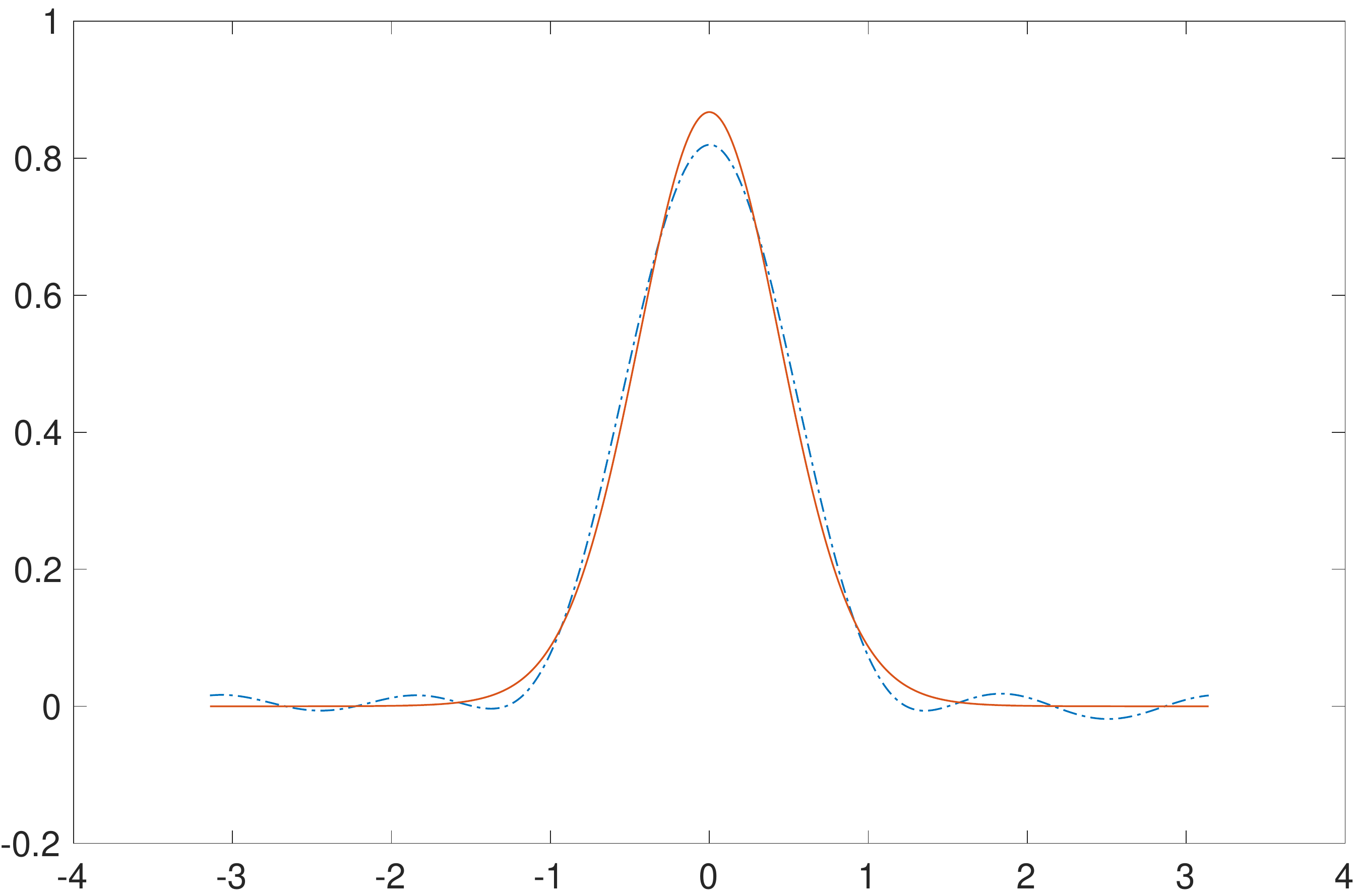}&\includegraphics[scale=0.23]{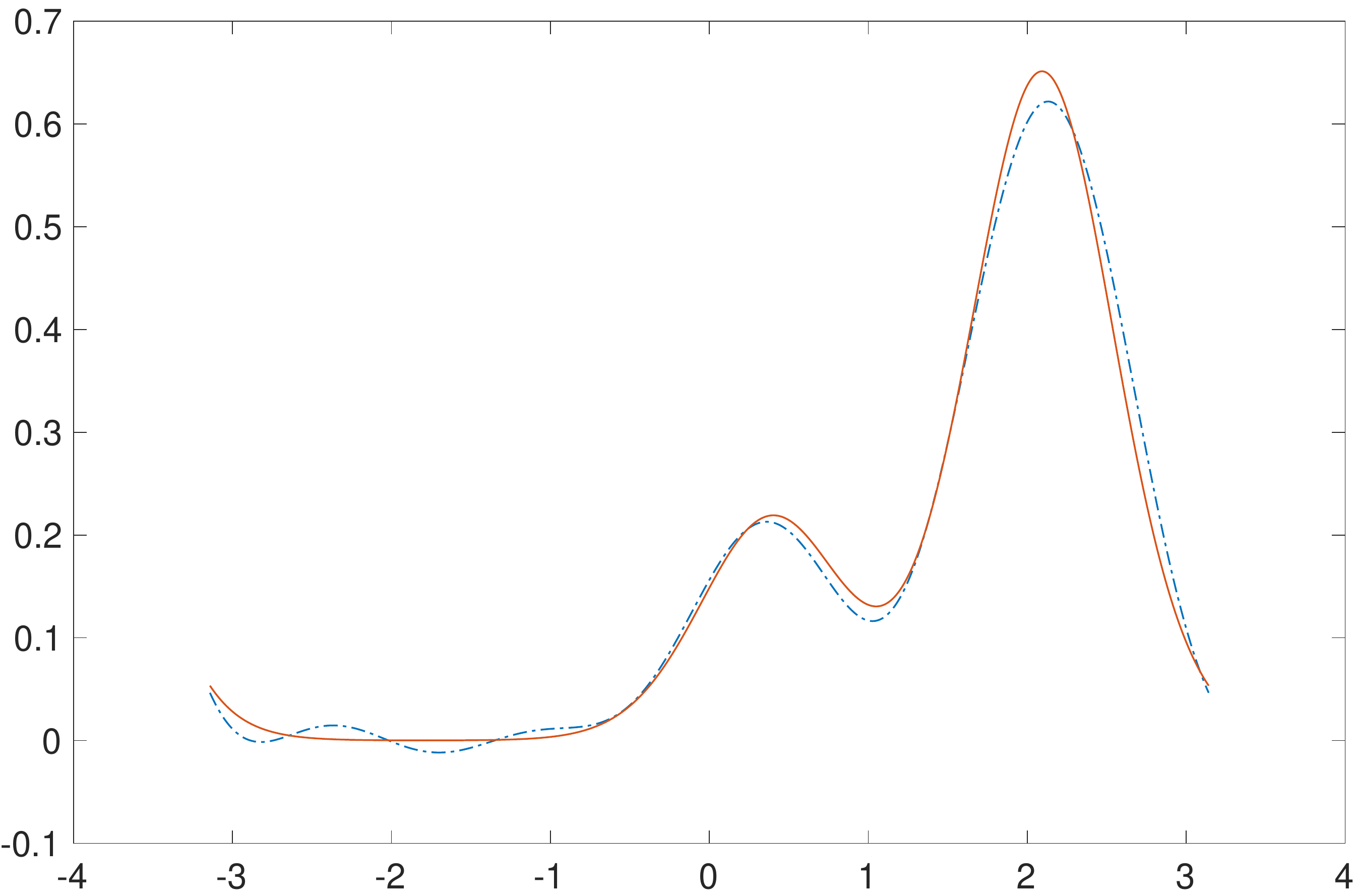} \\
\includegraphics[scale=0.23]{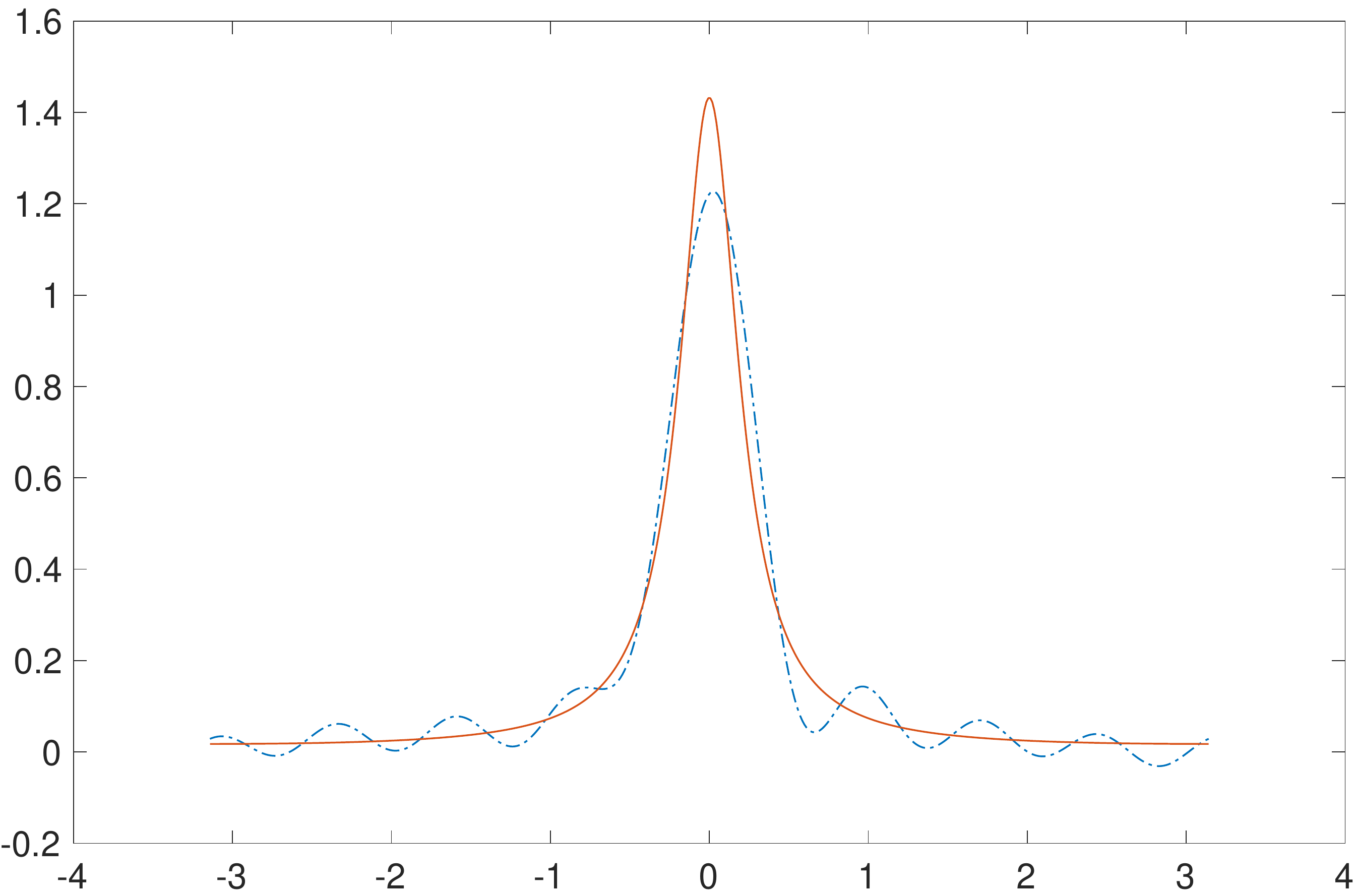}&\includegraphics[scale=0.23]{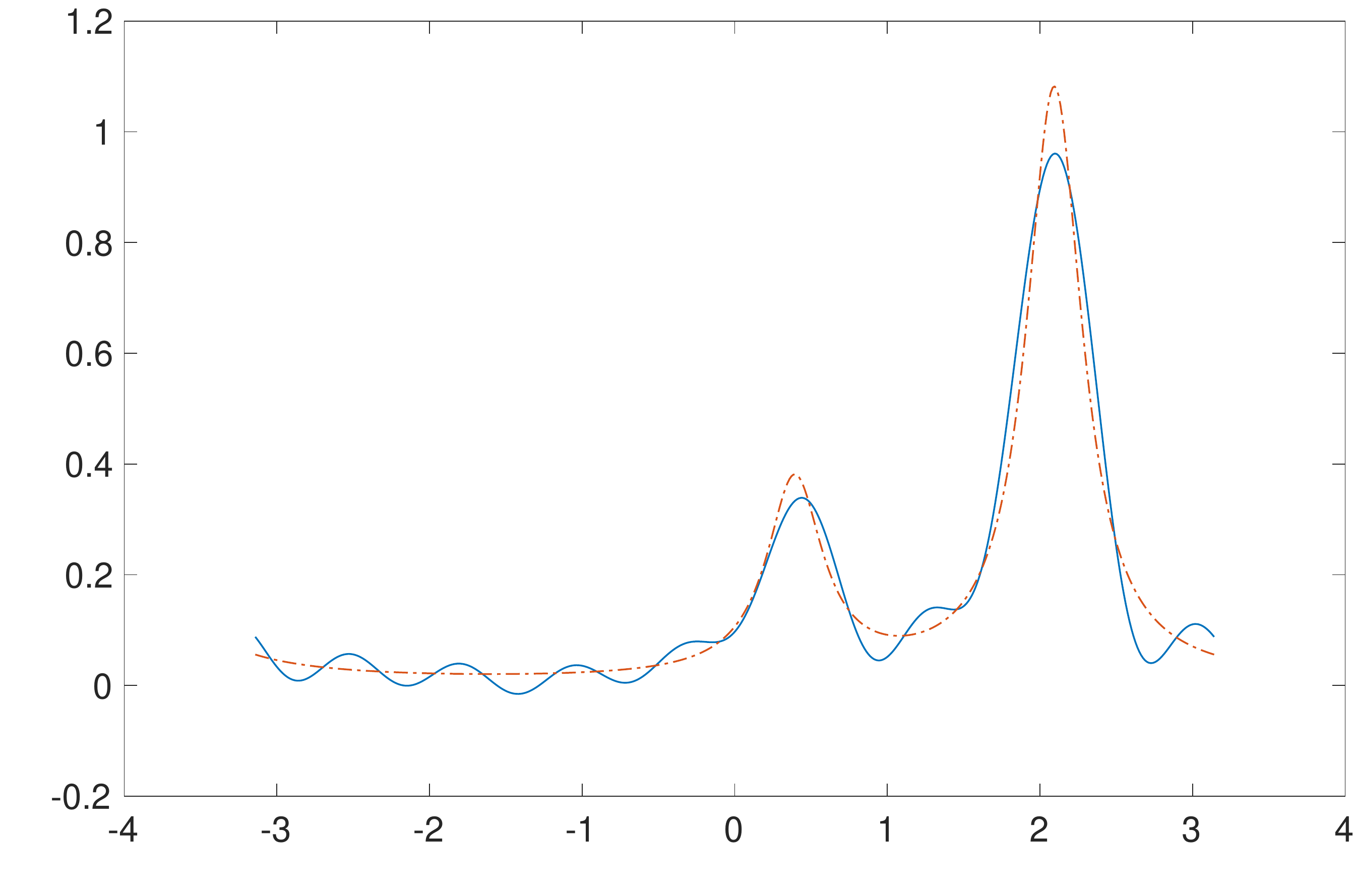}  \\
\includegraphics[scale=0.23]{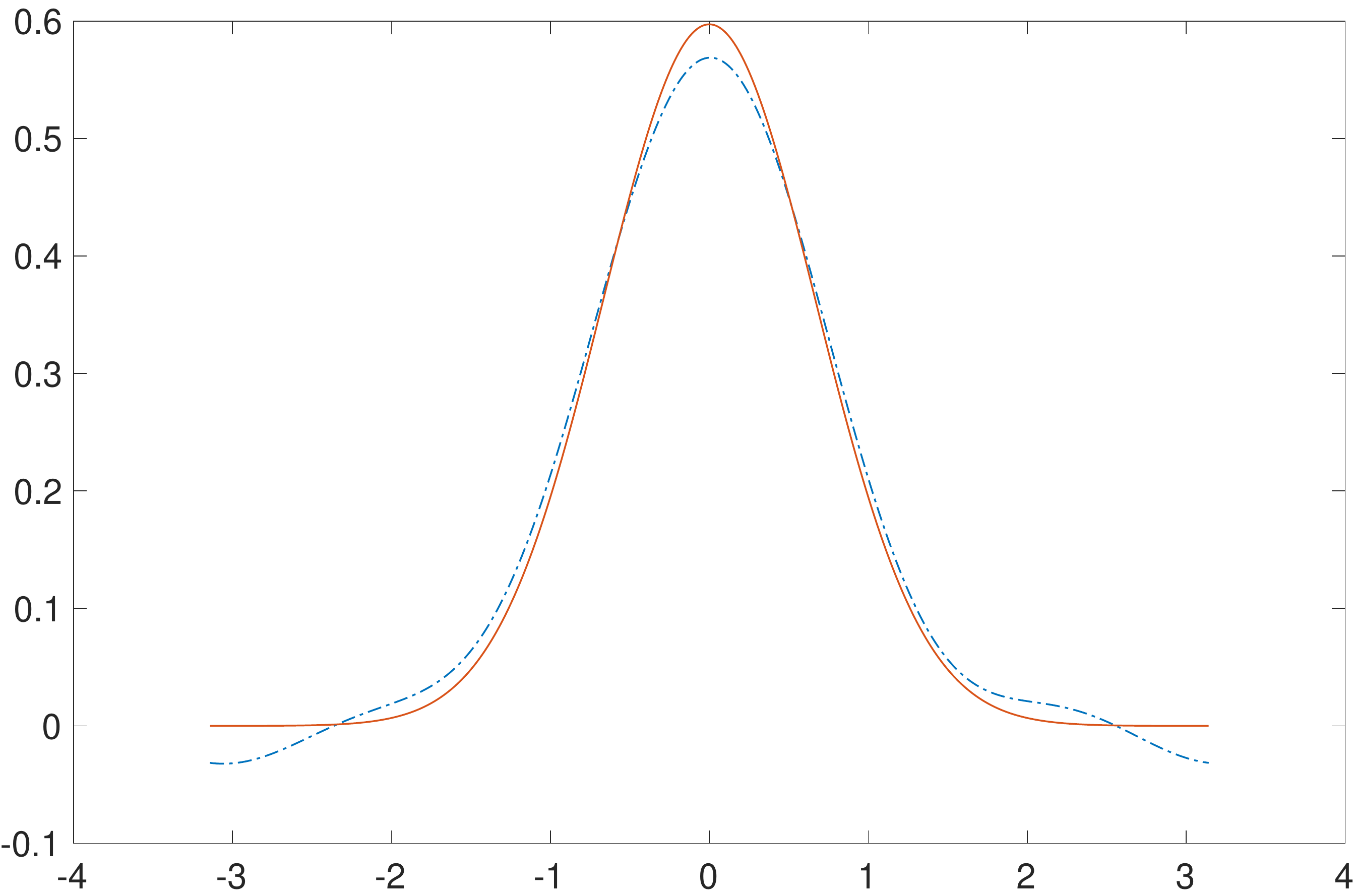}&\includegraphics[scale=0.23]{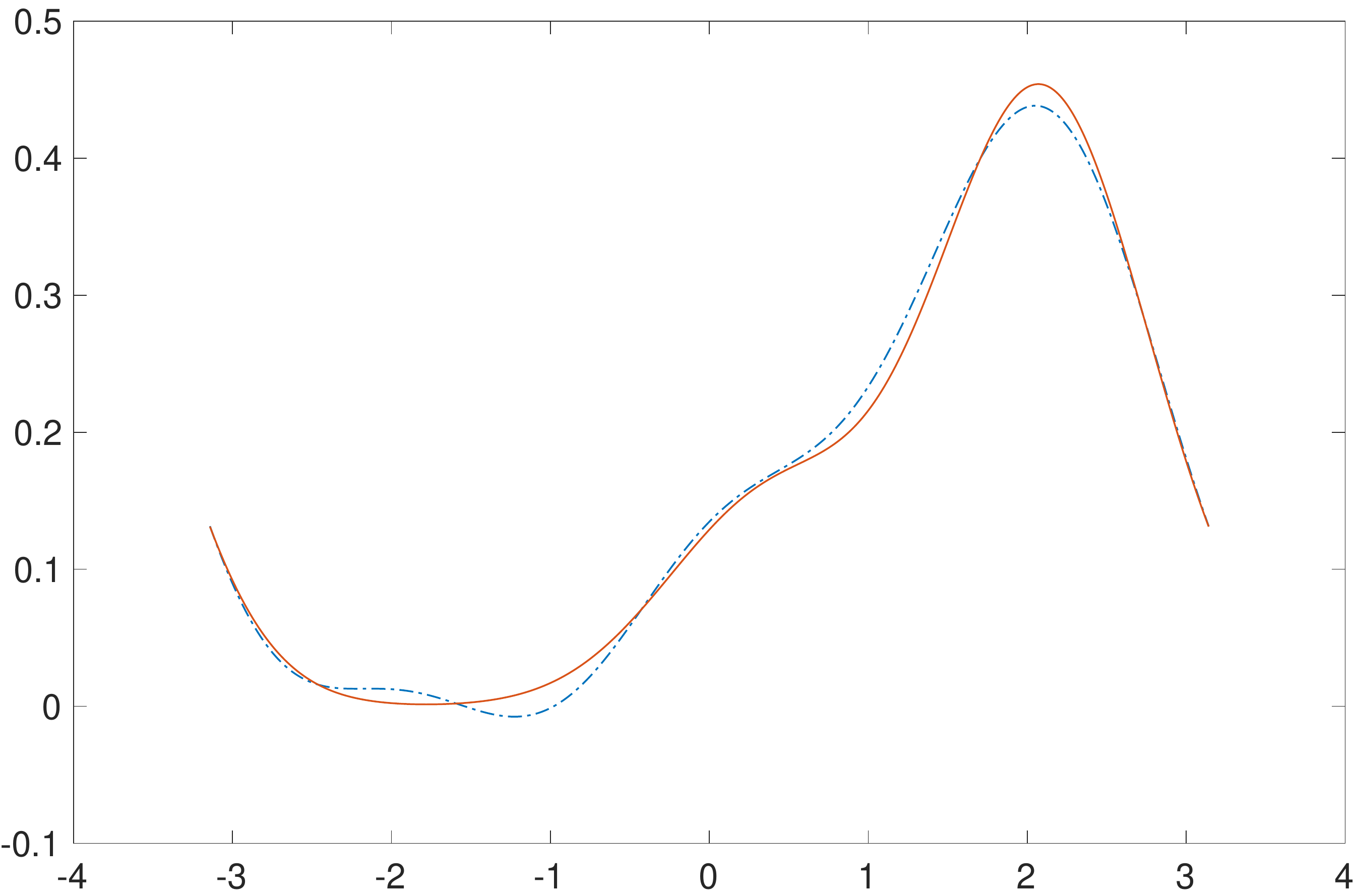}  
\end{tabular}
\caption{Estimation of the density $f$ and the mixture density $g$ for $n=1000$. In red, the density, in dotted lines its estimate. From top to bottom:  the von Mises density with $\kappa=5$, the wrapped Cauchy with $\gamma=0.8$ and the wrapped normal density with $\rho=0.8$. }
 \label{graphiques}
\end{center}\end{figure}

Finally, Figure \ref{graphiques} shows  reconstructions of the density $f$ and the mixture density $g$ as well. The estimates are  good.  
\begin{remark}
Note that for the two exceptional cases, when $p_0=0$ or $f$ is the uniform density, our procedure performs well. Indeed, if $p_0=0$, our method yields that $\alpha=\beta$ and retrieves that there is only one component in the mixture. When $f$ is the uniform density, 
our algorithm selects $\hat L=0$ which yields the uniform distribution.
\end{remark}

\section{Proofs}

\subsection{Proof of Theorem~\ref{identifiability} (identifiability)}

Denote 
\[
M^l(\theta):= pe^{-i \alpha l }+ (1-p)e^{-i\beta l}.
\]
Suppose 
$p f( x-\alpha)+(1-p) f(x-\beta)=p' f'( x-\alpha')+(1-p') f'(x-\beta')$.
The calculation of  the Fourier coefficients gives, for all $l\in \Z$, 
$ f^{\star l}M^l(\theta)=(f')^{\star l} M^l(\theta')$
 which implies $$ f^{\star l}|M^l(\theta)|^2=(f')^{\star l} M^l(\theta')\overline{M^l(\theta)}.$$ Then, our assumptions on $f$ and $f'$ entail 
 $$M^l(\theta')\overline{M^l(\theta)}\text{ is real}\quad \forall l\in \{1,2,3,4\}.$$

Let us now study the consequence of this fact.
Denote 
$$\gamma_1=\alpha'-\beta,\; \gamma_2=\alpha'-\alpha,\;\gamma_3=\beta'-\beta,\;\gamma_4=\beta'-\alpha$$
the 4 angles. 
Denote also the associated weights in $(0,1)$:
$$\lambda_1=p'(1-p),\; \lambda_2=p'p, \;\lambda_3=(1-p')(1-p),\; \lambda_4=(1-p')p.$$
With this notation 
$$
M^l(\theta')\overline{M^l(\theta)}=
\lambda_1e^{-i\gamma_1l}+\lambda_2e^{-i \gamma_2 l}+\lambda_3e^{-i \gamma_3 l} +\lambda_4e^{-i\gamma_4l}.$$
Then  
$M^l(\theta')\overline{M^l(\theta)}$ is real if and only if $\sum_{k=1}^4 \lambda_k \sin (l\gamma_k)=0$
and we have to solve the equations 
 \begin{equation}
\label{equasin}
\forall l=1,2,3,4,\qquad \sum_{k=1}^4 \lambda_k \sin (l\gamma_k)=0.
\end{equation}
This system of equations  is studied in Lemmas~\ref{detA} and \ref{CasAnnul}
below.

Let us now reason with the representatives of the $\gamma_k$ in $(-\pi,\pi]$. Lemma~\ref{CasAnnul} says that the possible values for the $\gamma_k$'s are  $0, \pi, \gamma, -\gamma$, for some $\gamma \in (0,\pi)$. 
Note that here
\begin{equation}
\label{eqdif}\gamma_1-\gamma_2=\gamma_3-\gamma_4=\alpha-\beta\neq 0\;\;
\text{ and }\;\;
\gamma_1-\gamma_3=\gamma_2-\gamma_4=\alpha'-\beta'\neq 0
\end{equation}
and then 
the $\gamma_k$'s take at least 2 different values: either 4 different values; or $\gamma_2=\gamma_3$ and the other distinct; or $\gamma_1=\gamma_4$ and the other distinct; or $\gamma_2=\gamma_3$ and $\gamma_1=\gamma_4$. 

$\bullet$ Let us first study the case where all the $\gamma_k$'s are distinct. There are 4!=24 ways of having 
$(\gamma_{i_1}, \gamma_{i_2}, \gamma_{i_3}, \gamma_{i_4})=(-\gamma, 0, \gamma,\pi)$. But  16 combinations lead to  $p=1/2$ or $p'=1/2$.
For example, if 
$(\gamma_{1}, \gamma_{2}, \gamma_{3}, \gamma_{4})=(-\gamma, 0, \gamma, \pi)$ then \eqref{equasin} becomes 
$$\lambda_1 \sin (-l\gamma)+\lambda_2 \sin (0)+\lambda_3 \sin (l\gamma)+\lambda_4 \sin (l\pi)=0.$$
Thus $\lambda_1=\lambda_3$, which gives $p'=1/2$. In the same way, there are 4 possibilities giving $\lambda_1=\lambda_3$, 4 possibilities giving $\lambda_1=\lambda_2$, 4 possibilities giving $\lambda_2=\lambda_4$, 4 possibilities giving $\lambda_3=\lambda_4$. All of this is impossible, since $p,p'\in (0,1)\backslash\{ 1/2\}$.
In addition, in the 4 cases where  
$\gamma_1=-\gamma_4$, we obtain via \eqref{eqdif} $\gamma_3=-\gamma_2$ which is impossible if $\{\gamma_2,\gamma_3\}=\{0,\pi\}$.
Idem if $\gamma_2=-\gamma_3$ and  $\{\gamma_1,\gamma_4\}=\{0,\pi\}$. Thus it is finally impossible that all the $\gamma_k$'s are distinct. 

$\bullet$ Let us now study the case where the $\gamma_k$'s take 3 distinct values ($\gamma_2=\gamma_3$ or $\gamma_1=\gamma_4$) and belong to $\{0, \pi, \gamma\}$ or $\{0, \pi, -\gamma\}$. In the case where $\gamma_2=\gamma_3$, coming back to equation $\eqref{equasin}$, we understand that all the rearrangements lead to $\lambda_4=0$ or $\lambda_1=0$ or $\lambda_2+\lambda_3=0$, which is impossible. In the same way, if $\gamma_1=\gamma_4$, equation \eqref{equasin} leads to 
$\lambda_2=0$ or $\lambda_3=0$ or $\lambda_1+\lambda_4=0$, which is impossible.

$\bullet$ The next case is when the $\gamma_k$'s take 3 distinct values and belong to $\{0, \gamma, -\gamma\}$ or $\{\pi, \gamma, -\gamma\}$. If $\gamma_2=\gamma_3$, we can then list the 6 cases: 
$$\begin{array}{ccc|c}
\gamma_1 & \gamma_2=\gamma_3 & \gamma_4 & \text{consequence}\\
\hline
-\gamma & 0/\pi & \gamma &  p=p',
{ \alpha'-\alpha=\beta'-\beta =0\pmod{\pi}} \\
\gamma & 0/\pi &- \gamma & p=p',{ \alpha'-\alpha=\beta'-\beta =0\pmod{\pi}} \\
-\gamma & \gamma  &  0/\pi &\lambda_1=\lambda_2+\lambda_3
\\
\gamma & -\gamma  &  0/\pi &\lambda_1=\lambda_2+\lambda_3
\\
0/\pi & \gamma & -\gamma& \lambda_4=\lambda_2+\lambda_3
\\
0/\pi & -\gamma & \gamma &\lambda_4=\lambda_2+\lambda_3
\\
\end{array}
$$
Note that $\lambda_1=\lambda_2+\lambda_3\Leftrightarrow p'(2-3p)=1-p$,
which is possible only if $p<1/2$ and $p'>1/2$ (recall that we suppose $p<1/2$ and $p'<1/2$). In the same way $\lambda_4=\lambda_2+\lambda_3\Leftrightarrow p'(1-3p)=1-2p$, which is possible only if $p>1/2$ and $p'<1/2$.\\
Finally, if  $\gamma_1=\gamma_4$, we have the 6 last cases:
$$\begin{array}{ccc|c}
\gamma_2 & \gamma_1=\gamma_4 & \gamma_3 & \text{consequence}\\
\hline
-\gamma & 0/\pi & \gamma &  p'=1-p\\
\gamma & 0/\pi &- \gamma & p'=1-p,\\
-\gamma & \gamma  &  0/\pi & p'=\frac{p}{3p-1}
\\
\gamma & -\gamma  &  0/\pi &  p'=\frac{p}{3p-1}
\\
0/\pi & \gamma & -\gamma& 
%
{p'=\frac{1-2p}{2-3p},  \beta- \alpha=\pm 2\pi/3}\\
0/\pi & -\gamma & \gamma&
%
{p'=\frac{1-2p}{2-3p},  \beta- \alpha=\pm 2\pi/3} 
\end{array}
$$
Note that the 4 first lines of this table are impossible since $p,p'\in(0,1/2)$ and  $p'=p/(3p-1)\notin (0,1)$ if $0<p<1/2$. Let us detail the lines 5 and 6. In these cases, $\lambda_1+\lambda_4-\lambda_3=0$ which provides $p'=(1-2p)/(3-2p)$. 
Moreover \eqref{eqdif} implies that $3\gamma_1=\gamma_2=0\pmod{\pi}$ and $2\gamma_1=\beta-\alpha=\alpha'-\beta'$.
According to the values of $\gamma_1$ and $\gamma_2$, there are 4 possibilities

$\diamond$ $\beta-\alpha= 2\pi/3$ and $(\alpha',\beta')=(\alpha, \beta +2\pi/3)$, 

$\diamond$ $\beta-\alpha= 2\pi/3$ and $(\alpha',\beta')=(\alpha+\pi, \beta -\pi/3)$

$\diamond$ $\beta-\alpha= -2\pi/3$ and $(\alpha',\beta')=(\alpha, \beta -2\pi/3)$

$\diamond$ $\beta-\alpha=- 2\pi/3$ and $(\alpha',\beta')=(\alpha+\pi, \beta +\pi/3)$

\color{black}

$\bullet$ The last case occurs when the $\gamma_k$'s take 2 distinct values.
If the $\gamma_k$'s take exactly  2 different values, using \eqref{eqdif}, necessarily
$$\gamma_1=\gamma_4 \text{ and }\gamma_2=\gamma_3 \pmod{2\pi}
\Rightarrow 0=\gamma_1-\gamma_4 +\gamma_3-\gamma_2=2(\alpha-\beta) \pmod{2\pi}$$
which is possible only if $\alpha-\beta= \pi\pmod{2\pi}$ (recall that $\alpha-\beta$ is always assumed $\neq 0$).  And in the same way $\alpha'-\beta'= \pi\pmod{2\pi}$.
Then $\gamma_1-\gamma_2=\alpha-\beta=\pi\pmod{2\pi}$. Thus the two different values of the $\gamma_k$'s are at a distant of $\pi$.

The first possibility is that these two values are $0$ and $\pi$, which corresponds to  the first case of Lemma~\ref{CasAnnul}. There are two subcases: 
1a. $(\gamma_1,\gamma_2,\gamma_3,\gamma_4)=(\pi,0,0,\pi)$ or 1b.
$(\gamma_1,\gamma_2,\gamma_3,\gamma_4)=(0,\pi,\pi,0)$.
In the subcase 1a. $(\alpha',\beta')=(\alpha, \beta)$. 
Equations
$$\begin{cases}
pf+(1-p)f_{\pi}=p'f'+(1-p')f'_{\pi}\\
pf_{\pi}+(1-p)f=p'f'_{\pi}+(1-p')f'
\end{cases} $$
entails that $f'$ is a linear combination of $f$ and $f_{\pi}$.
In the subcase $1b$.  $(\alpha',\beta')=(\alpha+\pi, \beta+\pi)=(\beta,\alpha)$.

The second possibility is that the two distinct  values $\gamma_1=\gamma_4$ and $\gamma_2=\gamma_3$ are not multiples of $\pi$, which 
corresponds to  the fourth case of Lemma~\ref{CasAnnul}. Then 
$(\gamma_1,\gamma_2,\gamma_3,\gamma_4)=(\gamma_1,-\gamma_1,-\gamma_1,\gamma_1)$ and $$\gamma_1-(-\gamma_1)=\gamma_1-\gamma_2=\pi \pmod{2\pi}$$ which entails $\gamma_1=\pi/2 \pmod{\pi}$.
Equation \eqref{equasin} becomes 
$$(\lambda_1-\lambda_2-\lambda_3+\lambda_4)\sin (l\pi/2)=0$$
so that $\lambda_1+\lambda_4=\lambda_2+\lambda_3$, which gives 
$$p'(1-p)+p(1-p')=p'p+(1-p')(1-p)\Rightarrow p'+p -2pp'=1/2\Rightarrow p'=1/2$$
which is impossible.

$\bullet$ Let us recap the only possible cases that we have obtained:

$\triangleright$ $p=p', \alpha'-\alpha=\beta'-\beta =0\pmod{\pi} $,
  
$\triangleright$ $p'=\frac{1-2p}{2-3p},  \beta- \alpha =\pm 2\pi/3$,
with the four possibilities described above,

$\triangleright$ $\beta-\alpha= \pi$, $(\alpha',\beta')=(\alpha, \beta)$ or $(\alpha',\beta')=( \beta, \alpha)$. 

This completes the proof of  the theorem.
 
\begin{lemma}\label{detA}
Let $\gamma_1, \dots, \gamma_4$ be four reals. Let $A$ be the matrix $(\sin(i \gamma_j))_{1\leq i,j\leq 4}$. Then 
$$\det A=64\prod_{k=1}^4\sin (\gamma_k) \prod_{1\leq i<j \leq 4} (\cos(\gamma_i)-\cos(\gamma_j)).
$$
\end{lemma}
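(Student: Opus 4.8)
The plan is to exploit the fact that $\sin(i\theta)$ equals $\sin\theta$ times a polynomial of degree $i-1$ in $\cos\theta$; this turns $A$ into a generalized Vandermonde matrix once a single factor $\sin$ is pulled out of each column. Recall the Chebyshev polynomials of the second kind $U_k$, characterized by $\sin((k+1)\theta)=\sin\theta\,U_k(\cos\theta)$, with $U_0=1$, $U_1(x)=2x$, $U_2(x)=4x^2-1$, $U_3(x)=8x^3-4x$, and in general $\deg U_k=k$ with leading coefficient $2^k$. Writing $x_j:=\cos\gamma_j$, the $(i,j)$ entry of $A$ is $\sin(i\gamma_j)=\sin(\gamma_j)\,U_{i-1}(x_j)$. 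Since column $j$ carries the common factor $\sin\gamma_j$, multilinearity of the determinant in the columns gives
$$\det A=\left(\prod_{k=1}^4 \sin\gamma_k\right)\det B,\qquad B=\big(U_{i-1}(x_j)\big)_{1\le i,j\le 4}.$$

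First I would compute $\det B$. Row $i$ of $B$ is the degree-$(i-1)$ polynomial $U_{i-1}$ evaluated at $x_1,\dots,x_4$, so by subtracting suitable multiples of the earlier rows each $U_{i-1}$ can be reduced, without changing the determinant, to its leading monomial $2^{\,i-1}x^{\,i-1}$. This brings $B$ to the classical Vandermonde matrix $(x_j^{\,i-1})_{i,j}$ scaled row by row, yielding
$$\det B=\left(\prod_{i=1}^4 2^{\,i-1}\right)\det\big(x_j^{\,i-1}\big)_{i,j}=2^{6}\prod_{1\le i<j\le 4}(x_j-x_i)=64\prod_{1\le i<j\le 4}(x_j-x_i).$$

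Finally I would reconcile the sign. The Vandermonde product $\prod_{i<j}(x_j-x_i)$ differs from the product $\prod_{i<j}(x_i-x_j)=\prod_{i<j}(\cos\gamma_i-\cos\gamma_j)$ appearing in the statement by the factor $(-1)^{\binom{4}{2}}=(-1)^6=1$, so the two coincide. Combining the three displays gives exactly
$$\det A=64\prod_{k=1}^4\sin\gamma_k\prod_{1\le i<j\le 4}(\cos\gamma_i-\cos\gamma_j).$$
There is no real obstacle here; the only points demanding care are the bookkeeping of the leading coefficients, namely that their product is $2^{0+1+2+3}=2^6=64$, and the verification that the number of swapped Vandermonde factors is even so that no stray sign survives. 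The remaining algebra is routine.
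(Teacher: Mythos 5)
Your proof is correct and follows essentially the same route as the paper: factor $\sin\gamma_j$ out of each column, reduce the resulting polynomial matrix in $\cos\gamma_j$ to a Vandermonde determinant by row operations, and track the leading coefficients $2^0\cdot2^1\cdot2^2\cdot2^3=64$. The only cosmetic difference is that you invoke the second-kind Chebyshev identity $\sin(i\gamma)=\sin\gamma\,U_{i-1}(\cos\gamma)$ in one step, whereas the paper first uses sine-difference row operations and then the first-kind polynomials $T_i$; both land on the identical Vandermonde factorization.
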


\begin{proof}
From matrix $A$, doing line modification $L_3 \leftarrow L_3-L_1$, and $L_4 \leftarrow L_4-L_2$, we obtain (recall that 
$\sin(2p)=2\sin(p)\cos(p)$ and $\sin(p)-\sin(q)=2 \sin(\frac{p-q}{2})\cos(\frac{p+q}{2})$)

$$\det A= \begin{vmatrix}
\sin(\gamma_1) & \sin(\gamma_2) & \sin(\gamma_3) & \sin(\gamma_4) \\ 
2\sin(\gamma_1)\cos(\gamma_1) & 2\sin(\gamma_2)\cos(\gamma_2)& 2\sin(\gamma_3)\cos(\gamma_3)&2\sin(\gamma_4)\cos(\gamma_4) \\ 
2\sin(\gamma_1)\cos(2\gamma_1) & 2\sin(\gamma_2)\cos(2\gamma_2)& 2\sin(\gamma_3)\cos(2\gamma_3)&2\sin(\gamma_4)\cos(2\gamma_4) \\ 
2\sin(\gamma_1)\cos(3\gamma_1) & 2\sin(\gamma_2)\cos(3\gamma_2)& 2\sin(\gamma_3)\cos(3\gamma_3)&2\sin(\gamma_4)\cos(3\gamma_4) 
\end{vmatrix}. $$
Using $4$-linearity of the determinant: 
$$\det A=8\left(\prod_{j=1}^4\sin (\gamma_j)\right)
 \begin{vmatrix}
1& 1& 1 &1\\ 
\cos(\gamma_1) & \cos(\gamma_2)& \cos(\gamma_3)&\cos(\gamma_4) \\ 
\cos(2\gamma_1) & \cos(2\gamma_2)& \cos(2\gamma_3)&\cos(2\gamma_4) \\ 
\cos(3\gamma_1) & \cos(3\gamma_2)& \cos(3\gamma_3)&\cos(3\gamma_4) 
\end{vmatrix} .$$
Now, denote $x_k=\cos(\gamma_k)$ and remark that $\cos(i\gamma_k)=T_i(\cos \gamma_k)=T_i(x_k)$ where $T_i$ is the $i$th Chebyshev polynomial: 
$T_0=1, T_1=X, T_2= 2X^2-1, T_3=4X^3-3X$. 
We have $T_2+T_0=2X^2$ and $T_3+3T_1=4X^3$.
Then, doing $L_3 \leftarrow L_3+L_1$, and $L_4 \leftarrow L_4+3L_2$:
$$\det A=8\left(\prod_{j=1}^4\sin (\gamma_j)\right)
\begin{vmatrix}
1& 1& 1 &1\\ 
x_1 & x_2 & x_3 & x_4\\ 
2x_1^2 &2 x_2^2 & 2x_3^2 & 2x_4^2\\ 
4x_1^3 &4 x_2^3 & 4x_3^3 & 4x_4^3
\end{vmatrix} 
=64\left(\prod_{j=1}^4\sin (\gamma_j)\right)
\begin{vmatrix}
1& 1& 1 &1\\ 
x_1 & x_2 & x_3 & x_4\\ 
x_1^2 & x_2^2 & x_3^2 & x_4^2\\ 
x_1^3 & x_2^3 & x_3^3 & x_4^3
\end{vmatrix}$$
This is a Vandermonde matrix, hence
$$\det A
=64\left(\prod_{j=1}^4\sin (\gamma_j)\right)
\prod_{1\leq i<j\leq 4} (x_i-x_j)=64\prod_{k=1}^4\sin (\gamma_k) \prod_{1\leq i<j \leq 4} (\cos(\gamma_i)-\cos(\gamma_j)).
$$
\end{proof}

\begin{lemma}\label{CasAnnul}
Let $\gamma_1, \dots, \gamma_4$ be four reals. 
Let $\lambda_1, \dots, \lambda_4 \in \R\backslash\{0\}$ such that 
\begin{equation}
\label{eqsin}
\sum_{k=1}^4 \lambda_k \sin (l\gamma_k)=0,\qquad l=1,\dots,4.
\end{equation}
Then, one of the following cases holds:
\begin{enumerate}
\item All $\gamma_k$ are multiples of $\pi$.
\item Exactly two $\gamma_k$ are multiples of $\pi$:  $\gamma_{i_1}=\gamma_{i_2}=0 \pmod{\pi}$ 
and $\gamma_{i_3}=\pm\gamma_{i_4}\pmod{2\pi}$.
\item Only one $\gamma_k$ is multiple of $\pi$:  $\gamma_{i_1}=0 \pmod{\pi}$ and 
$\gamma_{i_2}=\pm \gamma_{i_3}=\pm\gamma_{i_4} \pmod{2\pi}$. 
\item No $\gamma_k$ is multiple of $\pi$ and 
$\gamma_1=\pm\gamma_{2}=\pm \gamma_{3}=\pm\gamma_{4} \pmod{2\pi}$.
\end{enumerate}
\end{lemma}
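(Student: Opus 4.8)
The plan is to read \eqref{eqsin} as a homogeneous linear system in $(\lambda_1,\dots,\lambda_4)$: setting $v_k:=(\sin\gamma_k,\sin 2\gamma_k,\sin 3\gamma_k,\sin 4\gamma_k)^\top\in\R^4$, it reads $\sum_{k=1}^4\lambda_k v_k=0$. The hypothesis that \emph{all} $\lambda_k$ are nonzero is far stronger than mere solvability, so the vanishing of $\det(\sin(i\gamma_j))$ furnished by Lemma~\ref{detA} is only a first, too weak, necessary condition: by itself it forces merely one $\gamma_k\in\pi\Z$ or one coincidence $\cos\gamma_i=\cos\gamma_j$. My plan is to extract the exact constraint carried by ``$\sum\lambda_k v_k=0$ with full support'' from the internal structure of the $v_k$, keeping $\lambda_k$ arbitrary nonzero throughout.

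The engine is the factorization $v_k=\sin(\gamma_k)\,u(\cos\gamma_k)$, where $u(c):=(1,2c,4c^2-1,8c^3-4c)^\top=(U_0(c),\dots,U_3(c))^\top$ and $U_j$ is the $j$-th Chebyshev polynomial of the second kind; this is the sine analogue of the $\cos/T_j$ computation already performed inside Lemma~\ref{detA}. Two facts drive everything. First, $v_k=0$ precisely when $\sin\gamma_k=0$, i.e. $\gamma_k\in\pi\Z$; moreover $u$ depends only on $\cos\gamma_k$, so two indices with equal cosine yield \emph{identical} vectors $u$. Second, the key linear-independence fact: for pairwise distinct $c^{(1)},\dots,c^{(d)}$ with $d\le 4$, the vectors $u(c^{(1)}),\dots,u(c^{(d)})$ are linearly independent, since their matrix differs from the Vandermonde matrix $(c^{(j)\,i})_{0\le i\le 3}$ by the invertible triangular change of basis from $\{c^i\}$ to $\{U_i\}$, exactly as in the Vandermonde step of Lemma~\ref{detA}.

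With this in hand I would partition the indices into $P:=\{k:\gamma_k\in\pi\Z\}$ and $N:=\{1,\dots,4\}\setminus P$, writing $r:=|N|$. The indices in $P$ contribute nothing and leave their $\lambda_k$ unconstrained, so the system reduces to $\sum_{k\in N}\mu_k\,u(\cos\gamma_k)=0$ with $\mu_k:=\lambda_k\sin\gamma_k\neq 0$. Grouping $N$ by the value of $\cos\gamma_k$ and invoking the independence of the distinct $u(\cos\gamma_k)$, the signed sum of the $\mu_k$ over each group must vanish; in particular no group can be a singleton, for that would force its $\mu_k=0$. This single no-singleton principle produces the list mechanically: $r=0$ gives Case~1; $r=1$ is impossible (a lone index is necessarily a singleton), which is exactly why ``three multiples of $\pi$'' is absent from the statement; $r=2$ forces the two indices into one group, i.e. $\cos\gamma_{i_3}=\cos\gamma_{i_4}$ and hence $\gamma_{i_3}=\pm\gamma_{i_4}\pmod{2\pi}$, giving Case~2; and $r=3$, whose only singleton-free partition is $\{3\}$, forces all three cosines equal and gives Case~3.

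The case $r=4$ is where the main obstacle lies. The singleton-free partitions of four indices are $\{4\}$ and $\{2,2\}$. The partition $\{4\}$ makes all four cosines coincide, i.e. $\gamma_1=\pm\gamma_2=\pm\gamma_3=\pm\gamma_4\pmod{2\pi}$, which is precisely Case~4. The partition $\{2,2\}$, however, gives two distinct cosine classes $\{\pm a\}$, $\{\pm b\}$ with $a\neq b$ (neither in $\pi\Z$); there the group conditions are $\mu_1\pm\mu_2=0$ and $\mu_3\pm\mu_4=0$, satisfiable with all four $\mu_k$, hence all four $\lambda_k$, nonzero. Thus this two-pair configuration genuinely solves \eqref{eqsin} yet is not literally covered by Case~4 as worded, so it is exactly this branch that a rigorous treatment must confront, either by broadening Case~4 to allow two $\pm$-pairs or by importing extra structure on the $\gamma_k$. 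In the setting where the lemma is used this is precisely what the relations \eqref{eqdif} accomplish: they equate the differences $\gamma_i-\gamma_j$ and thereby force the two cosine classes to merge, eliminating the $\{2,2\}$ possibility. Every other branch follows automatically from the independence of the $u(\cos\gamma_k)$ established above, so the entire difficulty is concentrated in this single $\{2,2\}$ configuration.
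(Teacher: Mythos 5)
Your argument is correct, and it takes a genuinely different --- and in fact more powerful --- route than the paper. The paper proves Lemma~\ref{CasAnnul} by combining the determinant formula of Lemma~\ref{detA} with an iterated reduction: each time two cosines coincide, the corresponding terms are merged and the system drops one variable. Your factorization $\sin(l\gamma_k)=\sin(\gamma_k)\,U_{l-1}(\cos\gamma_k)$ replaces this iteration by a single structural fact (linear independence of the vectors $u(c)$ for distinct $c$, via the same triangular change of basis plus Vandermonde computation that powers Lemma~\ref{detA}), and your no-singleton principle then generates all the cases mechanically. What this buys is decisive: your treatment of $r=4$ is exhaustive where the paper's is not.

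Indeed, the $\{2,2\}$ configuration you isolate is not a loose end you failed to close; it is a genuine counterexample to the lemma as stated. Take $\gamma_1=-\gamma_2=\gamma$, $\gamma_3=-\gamma_4=\delta$ with $\cos\gamma\neq\cos\delta$ and $\gamma,\delta\notin\pi\Z$, and $\lambda_1=\lambda_2=\lambda_3=\lambda_4=1$: then $\sum_{k}\lambda_k\sin(l\gamma_k)=0$ for every $l$, all $\lambda_k$ are nonzero, yet none of the four listed cases holds. The corresponding flaw in the paper's own proof sits in its case 4: after merging $\gamma_1=\pm\gamma_2$, the phrase ``reasoning as in previous case'' tacitly assumes that each subsequent determinant step retains a coefficient known to be nonzero; but if the second coincidence is $\cos\gamma_3=\cos\gamma_4$ and both reduced coefficients $\lambda_1\pm\lambda_2$ and $\lambda_3\pm\lambda_4$ vanish simultaneously, the final $2\times 2$ step yields no constraint at all. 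In case 3 this cannot happen, because the surviving third coefficient is an original $\lambda_k\neq 0$; with four terms it can, which is exactly your two-pair branch.

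One correction to your closing claim, however: in the application to Theorem~\ref{identifiability}, the relations \eqref{eqdif} do \emph{not} by themselves eliminate the $\{2,2\}$ configuration. The pairing $\gamma_1=-\gamma_4$, $\gamma_2=-\gamma_3$ is automatically consistent with \eqref{eqdif} (there $\gamma_1-\gamma_2=\gamma_3-\gamma_4$ holds identically), so the two cosine classes need not merge. What actually rescues the theorem is the sign structure of the weights: the $\lambda_k$ are positive products of probabilities, so within-pair cancellation with positive weights forces the anti-symmetric form $\gamma_i=-\gamma_j$ together with $\lambda_i=\lambda_j$, and the three possible pairings then give $\lambda_1=\lambda_2$ (hence $p=1/2$), $\lambda_1=\lambda_3$ (hence $p'=1/2$), or $\lambda_1=\lambda_4$ and $\lambda_2=\lambda_3$, whose sum gives $p'=1/2$ --- all excluded since $p,p'\in(0,1/2)$. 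So your corrected lemma (case 4 broadened by the two-pair alternative), supplemented by this weight argument at the point of use, is what a complete proof requires; the paper as written contains neither step.
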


\begin{proof} First observe that,
since $\sum_{k=1}^4 \lambda_k \sin (l\gamma_k)=0$ with $\lambda\neq 0_{\R^4}$, necessarily
 det($A$)=0 where $A=(\sin(i \gamma_j))_{1\leq i,j\leq 4}$. Using Lemma~\ref{detA}
 \begin{equation}\label{eqdetA}
 \prod_{k=1}^4\sin (\gamma_k)  \prod_{1\leq i<j \leq 4} (\cos(\gamma_i)-\cos(\gamma_j))=0.
 \end{equation}
Now, let us study the various cases that make this quantity vanish. \\

For the first case, note that if three $\gamma_k$ are multiples of $\pi$:  $\gamma_{i_1}=\gamma_{i_2}=\gamma_{i_3}=0 \pmod{\pi}$ then 
equation  \eqref{eqsin} becomes
$ \lambda_{i_4}\sin (l\gamma_{i_4})=0$ and the last angle is also null modulo $\pi$.\\

In case 2., equation \eqref{eqsin} entails
$$ \lambda_{i_3}\sin (l\gamma_{i_3})+ \lambda_{i_4}\sin (l\gamma_{i_4})=0,\qquad l=1,2$$
with $\gamma_{i_3}\neq 0 \pmod{\pi}, \gamma_{i_4}\neq 0 \pmod{\pi}.$
Then, since $( \lambda_{i_3}, \lambda_{i_4})\neq(0,0)$,
$$0= \begin{vmatrix}
\sin(\gamma_{i_3}) & \sin(\gamma_{i_4}) \\
\sin(2\gamma_{i_3})\ & \sin(2\gamma_{i_4})
\end{vmatrix} 
=2\sin(\gamma_{i_3})\sin(\gamma_{i_4})(\cos(\gamma_{i_4})-\cos(\gamma_{i_3})).$$
Then $\cos(\gamma_{i_3})=\cos(\gamma_{i_4})$.
Either $\gamma_{i_3}=\gamma_{i_4} \pmod{2\pi}$,
or $\gamma_{i_3}=-\gamma_{i_4} \pmod{2\pi}$.\\

Let us now study case 3. For the sake of simplicity we assume that 
$\gamma_{4}=0 \pmod{\pi}$ and $\gamma_{k}\neq 0 \pmod{\pi}$ for $k=1,2,3$. 
Equation \eqref{eqsin} gives
$$ \lambda_{1}\sin (l\gamma_{1})+ \lambda_{2}\sin (l\gamma_{2})+\lambda_3\sin(l\gamma_3)=0,\qquad l=1,2,3.$$
With the same proof as  Lemma~\ref{detA}, we obtain
$$\prod_{k=1}^3\sin (\gamma_k)  \prod_{1\leq i<j \leq 3} (\cos(\gamma_i)-\cos(\gamma_j))=0.$$
Then $\gamma_1=\pm \gamma_2 \pmod{2\pi}$ or $\gamma_1=\pm \gamma_3 \pmod{2\pi}$ or $\gamma_2=\pm \gamma_3 \pmod{2\pi}$.  Moreover, if, for example, $\gamma_1=\pm \gamma_2 \pmod{2\pi}$ then 
$$ (\lambda_{1}\pm \lambda_2)\sin (l\gamma_{1})+\lambda_3\sin(l\gamma_3)=0,\qquad l=1,2$$
We are reduced to the previous case, then  $\gamma_1=\pm\gamma_3 \pmod{2\pi}$.\\

In the case 4., equation \eqref{eqdetA} becomes $ \prod_{1\leq i<j \leq 4} (\cos(\gamma_i)-\cos(\gamma_j))=0$, which provides 6 possible equalities. 
Assume, for example, $\cos(\gamma_1)-\cos(\gamma_2)=0$ and consequently $\gamma_1=\pm \gamma_2 \pmod{2\pi}$. Then 
$$( \lambda_{1}\pm \lambda_2)\sin (l\gamma_{1})+\lambda_3\sin(l\gamma_3)+ \lambda_{4}\sin (l\gamma_{4})=0,\qquad l=1,2,3.$$
Reasoning as in previous case, $\gamma_{1}=\pm \gamma_{3}=\pm\gamma_{4} \pmod{2\pi}$.

\end{proof}

\subsection{Proof of Theorem~\ref{consistance} (consistency)}
\label{proof:consistence}

This proof and the following are inspired from \cite{butucea2014}.
Let us denote $\tilde \Theta =(0,1/2) \times \S^1\times \S^1$. Denote $\dot  \phi(\theta)$ the gradient of any function $\phi$ with respect to $\theta=(p,\alpha,\beta)$, and $\ddot  \phi(\theta)$ the Hessian matrix. 

The proof of Theorem~\ref{consistance} relies on some preliminary results, given in the sequel. 

\begin{proposition}\label{contrast} Under Assumption~\ref{hyptheta2} 
the contrast function $S$ verifies  the following properties: 
$S(\theta) \geq 0$, and $S(\theta)=0$ if and only if $\theta=\theta_0$ or $\theta=\theta_0+\pi$.
\end{proposition}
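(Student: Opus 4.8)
The plan is to prove the two assertions of Proposition~\ref{contrast} separately, starting from the closed form
$$S(\theta)=2\sum_{l=1}^4\left(\Im\left(g^{\star l}\overline{M^l(\theta)}\right)\right)^2.$$
Nonnegativity is immediate: $S(\theta)$ is a sum of squares of real numbers, hence $S(\theta)\geq 0$ for every $\theta$. The substance of the statement is the characterization of the zeros, and this is where I would concentrate the argument.

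For the equivalence, first I would establish the implication that $\theta=\theta_0$ or $\theta=\theta_0+\pi$ forces $S(\theta)=0$. Using $g^{\star l}=M^l(\theta_0)f^{\star l}$ with $f^{\star l}\in\R$, we have $g^{\star l}\overline{M^l(\theta)}=f^{\star l}\,M^l(\theta_0)\overline{M^l(\theta)}$, so that $\Im(g^{\star l}\overline{M^l(\theta)})=f^{\star l}\,\Im(M^l(\theta_0)\overline{M^l(\theta)})$ since $f^{\star l}$ is real. When $\theta=\theta_0$ the quantity $M^l(\theta_0)\overline{M^l(\theta_0)}=|M^l(\theta_0)|^2$ is real, so its imaginary part vanishes for all $l$ and $S(\theta_0)=0$. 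When $\theta=\theta_0+\pi$ one checks directly that $M^l(\theta_0+\pi)=(-1)^l M^l(\theta_0)$, so $M^l(\theta_0)\overline{M^l(\theta_0+\pi)}=(-1)^l|M^l(\theta_0)|^2$ is again real and the imaginary part is zero, giving $S(\theta_0+\pi)=0$.

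The converse is the main obstacle, and I would reduce it to Lemma~\ref{EqMtheta}. Suppose $S(\theta)=0$. Then each summand vanishes, i.e. $\Im(g^{\star l}\overline{M^l(\theta)})=0$ for all $l\in\{1,2,3,4\}$. Since $f^{\star l}\in\R\setminus\{0\}$, the identity $\Im(g^{\star l}\overline{M^l(\theta)})=f^{\star l}\,\Im(M^l(\theta_0)\overline{M^l(\theta)})$ shows that $\Im(M^l(\theta_0)\overline{M^l(\theta)})=0$ for all $l\in\{1,2,3,4\}$. This is exactly the hypothesis of Lemma~\ref{EqMtheta} (with the roles of $\theta'$ and $\theta$ played by $\theta$ and $\theta_0$), whose conclusion is precisely that $\theta=\theta_0$ or $\theta=\theta_0+\pi$. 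The one point requiring care is that Lemma~\ref{EqMtheta} is stated for $\theta,\theta'\in\Theta$ under Assumption~\ref{hyptheta2}, so I would note at the outset that Proposition~\ref{contrast} is invoked under that same assumption and that $\theta_0\in\Theta$, which makes the lemma applicable. Combining the two implications yields the stated equivalence and completes the proof.
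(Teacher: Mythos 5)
Your proof is correct and follows essentially the same route as the paper: nonnegativity is immediate from the sum-of-squares form, $S(\theta_0)=0$ (and $S(\theta_0+\pi)=0$) follows from $g^{\star l}=M^l(\theta_0)f^{\star l}$ with $f^{\star l}$ real, and the converse is reduced to Lemma~\ref{EqMtheta} exactly as the paper does. Your treatment is in fact slightly more explicit than the paper's (which only verifies $S(\theta_0)=0$ and leaves the $\theta_0+\pi$ case to the lemma), but the key idea and the key lemma are identical.
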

\begin{proof}
It is clear that $S(\theta)\geq 0$ and that 
$$S(\theta_0)=\sum_{l =-4}^{4} \left ( \Im \left ( g^{\star l } \overline{{M}^l(\theta_0)} \right ) \right )^2=\sum_{l =-4}^{4} \left ( \Im \left ( f^{\star l } |M^l(\theta_0)|^2\right ) \right )^2=0.$$
By Lemma~\ref{EqMtheta}, if $\theta \neq \theta_0 \pmod{\pi}$, 
there exists $l_1\in \{1,\dots, 4\}$ such that

$ \Im\left(M^{l_1}(\theta_0)\overline{M^{l_1}(\theta)}\right) \neq 0$ %
so that 
$
S(\theta)\geq  \left ( \Im \left ({ g^{\star l_1}}\overline{{M}^{l_1}(\theta)} \right ) \right )^2 >0.
$
\end{proof}

\begin{lemma}\label{bornes}  
\begin{enumerate}
\item For all $\theta$ in $\tilde\Theta$, $|M^l(\theta)| \leq 1$. 
\item For all $1\leq k\leq n$, for all $l$ in $\Z$, \[
\sup_{\theta\in \tilde\Theta} |  Z^l_{k}(\theta) |\leq \frac{1}{2\pi}, \qquad \sup_{\theta\in \tilde\Theta} |  J^l(\theta)|  \leq \frac{1}{2\pi}.
\]
\item For all $1\leq k\leq n$, for all $l$ in $\Z$, 
{ 
\[
\sup_{\theta\in \tilde\Theta} \| \dot  Z^l_{k}(\theta) \|\leq \frac{ 2 +| l| }{ \sqrt{2}\pi}, \qquad \sup_{\theta\in \tilde\Theta} \| \dot  J^l(\theta)\|  \leq \frac{ 2 +| l| }{ \sqrt{2}\pi} .
\]
}
where $\|.\|$ is the Euclidean norm.
\item For all $1\leq k\leq n$, for all $l$ in $\Z$, \[
\sup_{\theta\in \tilde\Theta} \|  \ddot Z^l_{k}(\theta) \|_F\leq  \frac{|l| + l^2}{\pi} , \qquad \sup_{\theta\in \tilde\Theta} \| \ddot  J^l(\theta)\|_F  \leq  \frac{|l| + l^2}{\pi}.
\]
where $\|.\|_F$ is the Frobenius norm.
\end{enumerate}

\end{lemma}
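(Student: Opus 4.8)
The plan is to exploit the common structure of all the quantities involved: each is the imaginary part of a product $c\,M^l(\theta)$ in which the scalar $c$ does not depend on $\theta$. For $Z_k^l$ one has $c = e^{ilX_k}/(2\pi)$, so $|c| = 1/(2\pi)$, while for $J^l$ one has $c = \overline{g^{\star l}}$, and since $g$ is a density, $|g^{\star l}| = \left|\int_{\S^1} e^{-ilx} g(x)\,\frac{dx}{2\pi}\right| \le 1/(2\pi)$, so again $|c| \le 1/(2\pi)$. Because $c$ is constant in $\theta$, every $\theta$-derivative falls only on $M^l$, and $\Im(\cdot)$ commutes with differentiation; combined with $|\Im(w)| \le |w|$ this reduces each bound on $Z_k^l$ or $J^l$ to the corresponding bound on a derivative of $M^l$. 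With this principle in place, the $Z$-half and the $J$-half of every item are handled at once.

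Item 1 is just the triangle inequality, $|M^l(\theta)| \le p + (1-p) = 1$. Item 2 then follows immediately, since $|Z_k^l(\theta)| = |\Im(c\,M^l)| \le |c|\,|M^l| \le 1/(2\pi)$, and the same computation with $c = \overline{g^{\star l}}$ gives $|J^l(\theta)| \le 1/(2\pi)$.

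For items 3 and 4 I would first differentiate $M^l$ explicitly: $\partial_p M^l = e^{-i\alpha l} - e^{-i\beta l}$, $\partial_\alpha M^l = -ilp\,e^{-i\alpha l}$ and $\partial_\beta M^l = -il(1-p)\,e^{-i\beta l}$, while the only nonzero second derivatives are $\partial_{p\alpha}M^l = -il\,e^{-i\alpha l}$, $\partial_{p\beta}M^l = il\,e^{-i\beta l}$, $\partial_{\alpha\alpha}M^l = -l^2 p\,e^{-i\alpha l}$ and $\partial_{\beta\beta}M^l = -l^2(1-p)\,e^{-i\beta l}$ (with $\partial_{pp}M^l = \partial_{\alpha\beta}M^l = 0$). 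Taking moduli gives $|\partial_p M^l| \le 2$, $|\partial_\alpha M^l| = |l|p$ and $|\partial_\beta M^l| = |l|(1-p)$. Applying $|\Im(c\,\partial_j M^l)| \le |\partial_j M^l|/(2\pi)$ entrywise and summing squares yields, for the gradient, $\|\dot Z_k^l\|^2 \le \big(4 + l^2(p^2+(1-p)^2)\big)/(4\pi^2) \le (4+l^2)/(4\pi^2)$, using $p^2+(1-p)^2 \le 1$; hence $\|\dot Z_k^l\| \le \sqrt{4+l^2}/(2\pi) \le (2+|l|)/(\sqrt2\,\pi)$, the last step being the elementary check $4+l^2 \le 2(2+|l|)^2$. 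The Hessian is identical in spirit: summing the squared moduli of the entries above gives $\|\ddot M^l\|_F^2 \le 4l^2 + l^4(p^2+(1-p)^2) \le 4l^2 + l^4$, so $\|\ddot Z_k^l\|_F \le |l|\sqrt{4+l^2}/(2\pi) \le (|l|+l^2)/\pi$ via $4+l^2 \le 4(1+|l|)^2$. Replacing $c = e^{ilX_k}/(2\pi)$ by $c = \overline{g^{\star l}}$ and reusing $|g^{\star l}| \le 1/(2\pi)$ gives the twin bounds for $\dot J^l$ and $\ddot J^l$.

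I do not expect a genuine obstacle: the content is the explicit differentiation of $M^l$ plus bookkeeping to land on the stated constants. The only point requiring care is the passage from the complex-valued $\theta$-derivatives of $M^l$ to the real gradient and Hessian of $Z_k^l$ and $J^l$. This is legitimate because $\Im(\cdot)$ is $\R$-linear and $c$ is $\theta$-independent, so $\Im(\cdot)$ commutes with $\partial_\theta$, and because $|\Im(w)| \le |w|$ may be applied to each partial derivative before the squares are summed --- exactly what makes the Euclidean and Frobenius norms of the $Z$ and $J$ objects dominated by those of $M^l$. Uniformity over $\theta \in \tilde\Theta$ is then automatic, since none of the resulting bounds depends on $(p,\alpha,\beta)$.
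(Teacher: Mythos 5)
Your proof is correct and follows essentially the same route as the paper: explicit differentiation of $M^l(\theta)$, the bound $|\Im(c\,w)|\leq |c|\,|w|$ with $|c|\leq 1/(2\pi)$ applied entrywise (where $c=e^{ilX_k}/(2\pi)$ or $c=\overline{g^{\star l}}$), and the same elementary numerical inequalities to reach the stated constants. The only difference is presentational --- you factor the $Z$ and $J$ cases through a single $\theta$-independent scalar $c$ and spell out the checks $4+l^2\leq 2(2+|l|)^2$ and $4+l^2\leq 4(1+|l|)^2$, which the paper leaves implicit.
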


\begin{proof}
Point 1 is straightforward.

2. Let us start with $ Z^l_{ k}(\theta)$. We recall that
$
Z_{ k}^l(\theta)= \Im \left (  \frac{ e^{i lX_k}}{2 \pi}  M^l(\theta) \right ). $
Then
$$
 |Z_{k}^l(\theta)| \leq  \frac{1}{2 \pi} |M^l(\theta)|  \leq \frac{ 1}{2 \pi}.
$$
Furthermore
\[
|  J^l(\theta) |  \leq  {|g^{\star l}| }{|M^l(\theta)|}   \leq \frac{ 1}{2\pi} \int_{\S_1} g \leq  \frac{1 }{2 \pi}.
\] 

3. We have 
\[
\dot Z_{k}^l(\theta)  =
\frac{1}{2\pi}\Im \left( e^{i lX_k} \dot M^{l }(\theta)\right)= \frac{1}{2\pi}\Im \left ( e^{i lX_k}
\begin{pmatrix} e^{-il\alpha} - e^{-il  \beta}\\
                         -il pe^{-i \alpha l}\\
                          - i l(1-p) e^{-i \beta l}\\
\end{pmatrix}\right)
\]
and 
\[ 
\dot J^l(\theta) =    \Im \left ( \overline{g^{\star l}} \dot M^{l }(\theta) \right)= \Im \left (  \overline{g^{\star l}}\begin{pmatrix} e^{-il\alpha} - e^{-il \beta}\\
                         -il pe^{-i \alpha l}\\
                          - i l(1-p) e^{-i \beta l}\\
\end{pmatrix}\right).
\]
 We get 
 { 
 $$\|\dot Z_{k}^l(\theta)  \|  \leq    \frac{1}{2 \pi} \left(2^2 + p^2l^2 + (1-p)^2 l^2\right)^{1/2} \leq \frac{ 2 +| l| }{ \sqrt{2}\pi}$$
 }
and   we have the same bound for $\|\dot J^l(\theta)  \| $.

4. We have 
\begin{eqnarray*}
\ddot Z_{ k}^l(\theta) &=& \Im \left ( \frac{ e^{i lX_k}}{ 2\pi }    \ddot M^{l }_{}(\theta) \right ) \\
&=&  \Im \left ( \frac{ e^{i lX_k}}{ 2\pi}   \begin{pmatrix} 0 & -ile^{-il\alpha} & il e^{-il\beta}\\
                         - il e^{-il\alpha } & -l^2 pe^{-il \alpha} & 0\\
                          ile^{-il \beta} & 0 & -l^2(1-p)  e^{-i \beta l}
                      \end{pmatrix}  \right ).
                          \end{eqnarray*}
Thus
$$
\| \ddot Z^l_{k}(\theta) \|_F \leq \frac{1}{2\pi}\left(4l^2+l^4p^2+l^4(1-p)^2\right)^{1/2}\leq  \frac{|l| + l^2}{\pi}.
$$
We bound $\| \ddot J^l (\theta) \|_F$ in the same way. This ends the proof of the lemma.
\end{proof}

\begin{lemma}\label{bornes2} There exists a numerical positive constant $C$ such that the following inequalities hold.

1. For all $1\leq k\leq n$, for all $l$ in $\Z$
\[ \forall \theta,\theta'\in \tilde\Theta \qquad 
\| \dot Z^l_{k}(\theta) -\dot Z^l_{k}(\theta') \| \leq C \| \theta- \theta'\| (1+|l|+l^2).
\]
2. We also have 
\[
\|  \ddot Z^l_{k}(\theta) -\ddot Z^l_{k}(\theta')   \|_F  \leq C \| \theta - \theta' \| (1 +| l| +l^2+| l|^3).
\]
\end{lemma}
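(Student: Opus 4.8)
The plan is to prove both Lipschitz-type bounds by explicit computation, exploiting that we already have closed-form expressions for the relevant derivatives from the proof of Lemma~\ref{bornes}. The key structural observation is that $\dot Z_k^l(\theta)$ and $\ddot Z_k^l(\theta)$ are, up to the factor $e^{ilX_k}/(2\pi)$, built out of finitely many smooth trigonometric functions of $\theta=(p,\alpha,\beta)$, namely $e^{-il\alpha}$, $e^{-il\beta}$, and products of these with $p$, $(1-p)$, $il$, $l^2$. Since $|\Im(z)-\Im(z')|\leq|z-z'|$ and the modulus of $e^{ilX_k}$ is $1$, it suffices to bound the Euclidean (resp. Frobenius) norm of the difference of the matrix-valued functions $\dot M^l(\theta)-\dot M^l(\theta')$ (resp. $\ddot M^l(\theta)-\ddot M^l(\theta')$).

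For Point~1, I would write $\dot M^l(\theta)-\dot M^l(\theta')$ entrywise using the explicit column vector from Lemma~\ref{bornes}, Point~3. The first entry involves $e^{-il\alpha}-e^{-il\alpha'}$ and $e^{-il\beta}-e^{-il\beta'}$; the elementary bound $|e^{-il\alpha}-e^{-il\alpha'}|\leq|l|\,|\alpha-\alpha'|$ handles these, contributing an $|l|$ factor. The second and third entries are of the form $-ilp\,e^{-il\alpha}$; here I would add and subtract a mixed term, e.g.
$$
ilp\,e^{-il\alpha}-ilp'\,e^{-il\alpha'}=il(p-p')e^{-il\alpha}+ilp'(e^{-il\alpha}-e^{-il\alpha'}),
$$
so that the first piece is bounded by $|l|\,|p-p'|$ and the second by $|l|\cdot|l|\,|\alpha-\alpha'|=l^2|\alpha-\alpha'|$, using $|p'|\leq 1$. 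Summing the contributions of all three coordinates and bounding each increment $|p-p'|$, $|\alpha-\alpha'|$, $|\beta-\beta'|$ by $\|\theta-\theta'\|$ yields a bound of the form $C\|\theta-\theta'\|(1+|l|+l^2)$, which is exactly the claimed inequality. The same reasoning applies verbatim to $\dot J^l$ if needed, since $|\overline{g^{\star l}}|\leq 1/(2\pi)$.

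Point~2 is identical in spirit but one differentiation order higher. Starting from the explicit Hessian $\ddot M^l(\theta)$ in Lemma~\ref{bornes}, Point~4, the entries now carry factors up to $l^2$ (e.g. $-l^2 p\,e^{-il\alpha}$) and the trigonometric increments again contribute an extra $|l|$, so the mixed-term splitting produces contributions scaling like $l^2\cdot|l|=|l|^3$ for the worst entries, alongside lower-order terms $|l|$ and $l^2$. Collecting everything and using the Frobenius norm (a finite sum of squared entry-differences) gives the stated bound $C\|\theta-\theta'\|(1+|l|+l^2+|l|^3)$.

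The computation is routine; the only point requiring care is the add-and-subtract bookkeeping that separates increments in $p$ from increments in the angles, since it is precisely this step that correctly attributes the extra power of $|l|$ to the angular part. There is no genuine obstacle: both bounds follow from the mean-value-type estimate $|e^{-il\alpha}-e^{-il\alpha'}|\leq|l|\,|\alpha-\alpha'|$ together with the boundedness of $p,(1-p)\in(0,1)$, and the numerical constant $C$ can be taken uniform in $l$ and $\theta$.
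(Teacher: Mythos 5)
Your proof is correct and is essentially the paper's own argument: the paper's one-line proof (``Taylor expansions at first order and then apply same bounding techniques as in Lemma~\ref{bornes}'') amounts exactly to the computation you carry out by hand, namely first-order estimates such as $|e^{-il\alpha}-e^{-il\alpha'}|\leq |l|\,|\alpha-\alpha'|$ combined with the entrywise bounds on the explicit formulas for $\dot M^l$ and $\ddot M^l$. If anything, your add-and-subtract bookkeeping is slightly more self-contained, since a literal mean-value argument applied to $\ddot Z^l_k$ would require bounding a third-order derivative that Lemma~\ref{bornes} never writes down, whereas you only use quantities already displayed there.
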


\begin{proof}
We use Taylor expansions at first order and then apply same bounding techniques as in Lemma \ref{bornes}. 
\end{proof}

\begin{lemma} \label{lip} 
\begin{enumerate}
\item The function $S$ is Lipschitz continuous over $\tilde \Theta$.
\item The function  $S_n(\theta)$ is Lipschitz continuous over $\tilde \Theta$.
\item {The function  $\ddot S_n(\theta)$ is Lipschitz continuous over $\tilde \Theta$ with respect to Frobenius norm}, 
 {with Lipschitz constant not depending on $n$}.
\end{enumerate}
\end{lemma}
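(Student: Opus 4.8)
The plan is to prove Lemma~\ref{lip} by establishing that each of the three functions has bounded derivatives (or Lipschitz-continuous derivatives) on $\tilde\Theta$, using the uniform bounds already collected in Lemmas~\ref{bornes} and~\ref{bornes2}. Since $\tilde\Theta$ is convex, a uniform bound on the gradient yields the Lipschitz property via the mean value inequality, so the whole lemma reduces to controlling the relevant derivatives, all of which involve only the finitely many frequencies $l\in\{-4,\dots,4\}$. This finiteness is crucial: every bound below is a finite sum over $|l|\leq 4$, so the constants are genuinely numerical.

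\smallskip

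\emph{Part 1 ($S$ Lipschitz).} Recall $S(\theta)=2\sum_{l=1}^4 (J^l(\theta))^2$. I would compute $\dot S(\theta)=4\sum_{l=1}^4 J^l(\theta)\,\dot J^l(\theta)$ and bound its Euclidean norm using the estimates $|J^l(\theta)|\leq \frac1{2\pi}$ (Lemma~\ref{bornes}.2) and $\|\dot J^l(\theta)\|\leq \frac{2+|l|}{\sqrt2\pi}$ (Lemma~\ref{bornes}.3). This gives
$$\|\dot S(\theta)\|\leq 4\sum_{l=1}^4 \frac1{2\pi}\cdot\frac{2+|l|}{\sqrt2\pi},$$
a finite numerical constant, so $S$ is Lipschitz on the convex set $\tilde\Theta$.

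\smallskip

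\emph{Part 2 ($S_n$ Lipschitz).} Writing $S_n(\theta)=\frac1{n(n-1)}\sum_{l=-4}^4\sum_{k\neq j}Z_k^l(\theta)Z_j^l(\theta)$, its gradient is $\dot S_n(\theta)=\frac1{n(n-1)}\sum_{l=-4}^4\sum_{k\neq j}\bigl(\dot Z_k^l(\theta)Z_j^l(\theta)+Z_k^l(\theta)\dot Z_j^l(\theta)\bigr)$. Bounding termwise with $|Z_k^l|\leq\frac1{2\pi}$ and $\|\dot Z_k^l\|\leq\frac{2+|l|}{\sqrt2\pi}$ from Lemma~\ref{bornes}, every summand is controlled by a constant depending only on $l$, and the averaging over the $n(n-1)$ pairs leaves a bound independent of $n$. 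Hence $\|\dot S_n(\theta)\|$ is bounded by a numerical constant uniformly in $\theta$ and $n$, giving the Lipschitz property.

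\smallskip

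\emph{Part 3 ($\ddot S_n$ Lipschitz, constant independent of $n$).} This is the main obstacle, since it requires controlling an object built from second derivatives and showing its \emph{own} Lipschitz constant does not grow with $n$. I would first write $\ddot S_n(\theta)$ explicitly by differentiating the gradient above; each entry is an average over pairs $(k,j)$ of terms of the form $\ddot Z_k^l\,Z_j^l$ and $\dot Z_k^l(\dot Z_j^l)^\top$. To bound $\|\ddot S_n(\theta)-\ddot S_n(\theta')\|_F$, I would add and subtract to split each pair-difference into pieces where only one factor changes, then estimate each piece using the uniform bounds of Lemma~\ref{bornes} for the unchanged factors together with the Lipschitz bounds of Lemma~\ref{bornes2} (namely $\|\dot Z_k^l(\theta)-\dot Z_k^l(\theta')\|\leq C\|\theta-\theta'\|(1+|l|+l^2)$ and the analogous bound for $\ddot Z_k^l$). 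Since all factors and increments are bounded uniformly in $k$, $j$, and $\theta$, the averaging $\frac1{n(n-1)}\sum_{k\neq j}$ simply averages a $\theta$-independent, $n$-independent bound, so the resulting Lipschitz constant is a finite sum over $|l|\leq 4$ of numerical constants and in particular does not depend on $n$. The care needed is purely in the bookkeeping of the telescoping product differences; the substantive estimates are exactly those supplied by Lemmas~\ref{bornes} and~\ref{bornes2}.
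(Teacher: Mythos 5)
Your proposal is correct and follows essentially the same route as the paper: both exploit the convexity of $\tilde\Theta$ together with the mean value theorem (equivalently, uniform gradient bounds) and the estimates of Lemma~\ref{bornes} for parts 1 and 2, and for part 3 both perform the same add-and-subtract splitting of the pair terms $\ddot Z_k^l Z_j^l + \dot Z_k^l(\dot Z_j^l)^\top$, controlling unchanged factors by Lemma~\ref{bornes} and increments by Lemma~\ref{bornes2}, with the finite sum over $|l|\leq 4$ and the averaging over the $n(n-1)$ pairs yielding an $n$-independent constant.
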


\begin{proof}
We will write $C$ for a numerical constant that may change from line to line but is numerical.

Let us start with point 1. We recall that $S(\theta)= \sum_l J^{l}(\theta)^2$. Let $\theta$ and $\theta'$ in $\tilde \Theta$. As $\tilde \Theta$ is a convex set, we get, thanks to the mean value theorem
\begin{eqnarray*}
|S(\theta)-S(\theta')| &= &\left| \sum_{l=-4}^4 J^{l}(\theta)^2-J^{l}(\theta')^2\right| 
=\left |2(\theta -\theta')^\top \sum_{l=-4}^4 J^l(\theta_u)\dot J^{l}(\theta_u)  \right|\\
&\leq & C \| \theta -\theta' \|\sum_{l=-4}^4 (1+ |l|) 
\leq C\| \theta -\theta' \|
\end{eqnarray*}
with $\theta_u$ lying on the line connecting  $\theta$ to $\theta'$, and using Lemma \ref{bornes}.

Let us shift to point 2. Due to the mean value theorem, we have 
\begin{eqnarray*}
|S_n(\theta) - S_n(\theta') |&=& \left| \frac{1}{n(n-1)} \sum_{k \neq j} \sum_{l=-4}^4 \left ( Z^l_{k}(\theta) Z^l_{j}(\theta) - Z^l_{k}(\theta') Z^l_{j}(\theta')  \right ) \right |\\
&=& \left| \frac{1}{n(n-1)}  \sum_{k \neq j} \sum_{l=-4}^4 \left ( (\theta- \theta')^\top \nabla  [ Z^l_{k}(\theta) Z^l_{j}(\theta)] |_{\theta= \theta_u}  \right ) \right |\\
&=&\left| \frac{2(\theta -\theta')^\top}{n(n-1)}  \sum_{k \neq j} \sum_{l=-4}^4  \dot Z^{l}_{k}(\theta_u)Z^{l}_{j}(\theta_u) \right |,
\end{eqnarray*}
with $\theta_u$ lying on the line connecting  $\theta$ to $\theta'$. Then using 1. and 2. of Lemma  \ref{bornes} we get
\begin{eqnarray*}
 | S_n(\theta) - S_n(\theta') | &\leq&  \frac{C \| \theta- \theta' \|}{n(n-1)}  \sum_{k \neq j} \sum_{l=-4}^4   (1+|l |) \leq   C \| \theta- \theta' \|
\end{eqnarray*}
which ends the proof of the second point.

Concerning point 3. we have that
$$
\ddot S_n(\theta)=  \frac{2}{n(n-1)}  \sum_{k \neq j} \sum_{l=4}^4 (\ddot Z^l_{k}(\theta) Z^l_{j}(\theta)+ \dot Z^l_{k}(\theta) \dot Z^l_{j}(\theta)^\top). 
$$
Hence 
\begin{eqnarray*}
\| \ddot S_n(\theta) - \ddot S_n(\theta')  \|_F  
&\leq &\frac{2}{n(n-1)} \sum_{k \neq j} \sum_{l=-4}^4 \bigg (  \|  (\ddot Z^l_{k}(\theta)  - \ddot Z^l_{k}(\theta')) Z^l_{j}(\theta)   \|_F \\
&&+  \|  \ddot Z^l_{k}(\theta' )(  Z^l_{j}(\theta)- Z^l_{j}(\theta') ) \|_F 
+ \| \dot Z^l_{k}(\theta' )(  \dot Z^l_{j}(\theta)- \dot Z^l_{j}(\theta')^\top ) \|_F \\
&  &  +
\|  ( \dot  Z^l_{k}(\theta')- \dot Z^l_{k}(\theta) ) \dot Z^l_{j}(\theta)^\top  \|_F \bigg )
\end{eqnarray*}
Using Taylor expansions and Lemma \ref{bornes} and \ref{bornes2}, we get that 
$$
\| \ddot S_n(\theta) - \ddot S_n(\theta')  \|_F \leq C\| \theta -\theta' \| \sum_{l=-4}^4 (1+ |l|+ l^2 +|l|^3) .
$$

\end{proof}

\begin{proposition}\label{variance} There exist a positive  constant $C$ 
 such that 
\[
 \sup_{\theta\in \tilde \Theta} \E [(S_n(\theta)- S(\theta))^2] \leq  \frac{C}{n}.  
\]
\end{proposition}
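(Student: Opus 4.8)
The plan is to exploit the U-statistic structure of $S_n(\theta)$ directly, via the standard variance decomposition for U-statistics of degree 2. Recall from the excerpt that
$$
S_n(\theta)= \frac{1}{n(n-1)} \sum_{l=-4}^{4}  \sum_{1\leq k \neq  j\leq n} Z_{k}^l(\theta) Z_{j}^l(\theta),
$$
and that this is an unbiased estimator of $S(\theta)=\sum_{l=-4}^4 (J^l(\theta))^2$, with $\E(Z_k^l(\theta))=J^l(\theta)$. The first step is to write the centered statistic as a sum of a linear (Hájek projection) part and a quadratic part. Setting $W_k^l(\theta):=Z_k^l(\theta)-J^l(\theta)$, which is centered, I would expand
$$
Z_k^l(\theta)Z_j^l(\theta)= J^l(\theta)^2+J^l(\theta)\bigl(W_k^l(\theta)+W_j^l(\theta)\bigr)+W_k^l(\theta)W_j^l(\theta),
$$
so that
$$
S_n(\theta)-S(\theta)= \frac{2}{n}\sum_{l=-4}^4 J^l(\theta)\sum_{k=1}^n W_k^l(\theta)+\frac{1}{n(n-1)}\sum_{l=-4}^4\sum_{k\neq j}W_k^l(\theta)W_j^l(\theta).
$$

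The second step is to bound the second moment of each of the two pieces separately, using $(a+b)^2\leq 2a^2+2b^2$ to split $\E[(S_n-S)^2]$. For the linear part, the cross terms with $k\neq k'$ vanish by independence and centering of the $W_k^l$, so its variance reduces to a single sum over $k$ of order $n$, giving an $O(1/n)$ contribution after the $1/n^2$ prefactor. For the quadratic (degenerate) part, the key observation is that $\E[W_k^l(\theta)W_j^l(\theta)W_{k'}^{l'}(\theta)W_{j'}^{l'}(\theta)]$ is nonzero only when the indices pair up, i.e. $\{k,j\}=\{k',j'\}$, again by independence and the fact that each $W$ has mean zero; this leaves $O(n^2)$ surviving terms, which against the $1/(n(n-1))^2$ prefactor also yields $O(1/n^2)$, hence is negligible compared to the linear term. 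Throughout, the boundedness $\sup_\theta|Z_k^l(\theta)|\leq 1/(2\pi)$ and $\sup_\theta|J^l(\theta)|\leq 1/(2\pi)$ from Lemma~\ref{bornes} guarantees that $|W_k^l(\theta)|\leq 1/\pi$ uniformly in $\theta$, so every factor is bounded by a numerical constant and the finite sum over $l\in\{-4,\dots,4\}$ contributes only a further numerical constant.

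The main point to handle carefully is the uniformity of the bound over $\theta\in\tilde\Theta$, i.e. that the constant $C$ does not depend on $\theta$. This is precisely what the uniform bounds in Lemma~\ref{bornes} provide: since $|M^l(\theta)|\leq 1$ on $\tilde\Theta$, the bounds on $|Z_k^l|$, $|J^l|$, and hence on $|W_k^l|$, are all by absolute numerical constants valid for every $\theta$. Consequently each of the finitely many (at most $9$) indices $l$ contributes a $\theta$-independent constant, and taking the supremum over $\theta$ before summing poses no difficulty. I do not expect a genuine obstacle here; the only care needed is bookkeeping the index pairings in the degenerate quadratic term to confirm it is $O(1/n^2)$ and therefore dominated by the $O(1/n)$ linear term, so that the final bound is $\sup_{\theta}\E[(S_n(\theta)-S(\theta))^2]\leq C/n$ as claimed.
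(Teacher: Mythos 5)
Your proposal is correct and follows essentially the same route as the paper: both split $S_n(\theta)-S(\theta)$ into the linear (H\'ajek) part $\frac{2}{n}\sum_l J^l(\theta)\sum_k \bigl(Z_k^l(\theta)-J^l(\theta)\bigr)$ and the degenerate quadratic part $\frac{1}{n(n-1)}\sum_l\sum_{k\neq j}\bigl(Z_k^l(\theta)-J^l(\theta)\bigr)\bigl(Z_j^l(\theta)-J^l(\theta)\bigr)$, then exploit centering and independence (uncorrelatedness of the pair terms) together with the uniform bounds of Lemma~\ref{bornes} to get a $\theta$-free $O(1/n)$ bound. The only cosmetic difference is that the paper notes the two parts are orthogonal, so the squared moments add exactly, whereas you use $(a+b)^2\leq 2a^2+2b^2$; this changes nothing of substance.
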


\begin{proof} The definitions of $S_n$ and $S$ provide
\[
S_n(\theta)- S(\theta)  =\frac{1}{n(n-1)}   \sum_{l=-4}^4\sum_{k \neq  j} \left( Z^l_{k}(\theta) { Z^l_{ j}(\theta) } -  J^{l}(\theta)^2 \right)
=T_n + V_n
\]
where  
\[ 
T_n=  \frac{2}{n(n-1)}  \sum_{l=-4}^4  \sum_{k < j}( Z^l_{k}(\theta) - J^l(\theta)) ( {Z^l_{ j}(\theta) }- {J^l(\theta)}) 
\]
and
\[V_n =
   \frac{2}{n}   \sum_{l=-4}^4 \sum_{k=1 }^n  (Z^l_{k}(\theta)- {J^l(\theta)})  J^l(\theta).
\]
Note that $\E(Z^l_{k}(\theta) - J^l(\theta))=0$ which entails $\E[T_nV_n]=0$.
Then 
\[
\E\left[\left(S_n(\theta)- S(\theta) \right)^2\right]=\E\left[\left(T_n+V_n\right)^2\right]=\E\left[T_n^2\right]+\E\left[V_n^2\right].
\]
 Now, 
since the variables $ \left(\sum_{l=-4}^4  ( Z_{k}^l(\theta)  -  J^l(\theta)) ( {Z_{j}^l(\theta) } -  J^l(\theta))\right)_{k<j}$  are uncorrelated,
\begin{eqnarray*}
\E[T_n^2] &=& \frac{2}{n(n-1)}  \E \left [ \left  (  \sum_{l=-4}^4  ( Z_{1}^l(\theta)  -  J^l(\theta)) ( {Z_{2}^l(\theta) } - { J^l(\theta))}  \right ) ^2 \right ] \\
& \leq &  \frac{2}{n(n-1)} \E \left [   \left (   \sum_{l=-4}^4  \frac{2}{2\pi} \cdot \frac{2}{2\pi} \right )^2  \right ]   
\leq  \frac{ C}{2n} 
\end{eqnarray*}
using Lemma \ref{bornes}.
We focus now on $V_n$: in the same way 
\begin{eqnarray*}
\E[V_n^2] &=& \frac 4 n \E \left [ \left  ( \sum_{l=-4}^4  (Z_{1}^l(\theta)-J^l(\theta)) {J^l(\theta)  } \right )^2 \right ] \\
& \leq & \frac 4 n \E \left [ \left  ( \sum_{l=-4}^4   \frac{2}{2\pi} \cdot \frac{1}{2\pi}    \right )^2 \right ] \leq  \frac{ C}{2n}  ,
\end{eqnarray*}
using Lemma \ref{bornes} again. 
\color{black}
\end{proof}

{ Theorem~\ref{consistance} is finally  proved using the following lemma, its assumptions 
being  ensured by Proposition~\ref{contrast}, Lemma~\ref{lip} and Proposition~\ref{variance}.
}

\begin{lemma}\label{lem:convmin}
 Assume that $\Theta$ is a compact set and let $S: \Theta \to \R$ be a continuous function. Assume that 
$$S(\theta)=\min_{\Theta} S \Leftrightarrow \theta=\theta_0 \text{ or }\theta=\theta_0'$$ where $\theta_0,\theta_0'\in \Theta$. Let $S_n:\Theta \to \R$ be a  function which is uniformly continuous and such that for all $\theta$
$| S_n(\theta)-S(\theta)|$ tends to 0 in probability.
Let $\hat\theta_n$ be a point such that $S_n(\hat\theta_n)=\inf_{\Theta} S_n$. Then $\hat \theta_n \rightarrow  \theta_0$ or $\theta_0'$  in probability. 
\end{lemma}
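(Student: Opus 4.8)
The plan is to reduce the statement to a uniform convergence result, $\sup_{\theta\in\Theta}|S_n(\theta)-S(\theta)|\to 0$ in probability, and then to combine it with a standard ``well-separated minimum'' argument, here adapted to the two-point minimizer set $\{\theta_0,\theta_0'\}$.

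\emph{Uniform convergence.} Fix $\epsilon>0$. Since $\Theta$ is compact I would cover it by finitely many balls $B(\theta_1,\delta),\dots,B(\theta_N,\delta)$ and, for $\theta\in B(\theta_j,\delta)$, split
$$|S_n(\theta)-S(\theta)|\le |S_n(\theta)-S_n(\theta_j)|+|S_n(\theta_j)-S(\theta_j)|+|S(\theta_j)-S(\theta)|.$$
The third term is made small by the uniform continuity of the fixed function $S$ (continuous on the compact $\Theta$); the first term is made small by the equicontinuity of the family $(S_n)$, which in the present application is guaranteed by Lemma~\ref{lip}, since the Lipschitz constant of $S_n$ does not depend on $n$. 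For $\delta$ small enough both are below $\epsilon/3$ simultaneously, whence $\sup_{\theta\in\Theta}|S_n(\theta)-S(\theta)|\le \tfrac{2\epsilon}{3}+\max_{1\le j\le N}|S_n(\theta_j)-S(\theta_j)|$. The maximum runs over finitely many fixed points and each term tends to $0$ in probability, so $\P(\sup_\Theta|S_n-S|>\epsilon)\le \P(\max_j|S_n(\theta_j)-S(\theta_j)|>\epsilon/3)\to 0$.

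\emph{From uniform convergence to the conclusion.} Write $m=\min_\Theta S$, attained by continuity and compactness. For $\epsilon>0$ set $B_\epsilon=\{\theta\in\Theta:\|\theta-\theta_0\|\le\epsilon\text{ or }\|\theta-\theta_0'\|\le\epsilon\}$ and $K=\Theta\setminus B_\epsilon$. Since by hypothesis the minimizers are exactly $\theta_0,\theta_0'\in B_\epsilon$, we have $S>m$ on the compact $K$, so $\eta:=\inf_K S-m>0$. On the event $\{\sup_\Theta|S_n-S|<\eta/2\}$ one has $S_n(\hat\theta_n)\le S_n(\theta_0)<m+\eta/2$, while $\hat\theta_n\in K$ would force $S_n(\hat\theta_n)>S(\hat\theta_n)-\eta/2\ge m+\eta/2$, a contradiction; hence $\hat\theta_n\in B_\epsilon$. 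Thus $\P(\hat\theta_n\in B_\epsilon)\ge \P(\sup_\Theta|S_n-S|<\eta/2)\to 1$, which is precisely the asserted convergence of $\hat\theta_n$ towards $\theta_0$ or $\theta_0'$ in probability.

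The main obstacle is the uniform convergence step: pointwise convergence in probability together with the uniform continuity of each individual $S_n$ does not by itself yield uniform convergence (a narrow bump of fixed height whose location drifts gives pointwise but not uniform convergence). What makes the covering argument go through is a modulus of continuity for $S_n$ that is uniform in $n$; this is exactly what the $n$-independent Lipschitz bound of Lemma~\ref{lip} supplies, and it is the reason I rely on equicontinuity rather than on mere uniform continuity in the first step.
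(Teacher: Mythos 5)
Your proof is correct and is built from the same basic ingredients as the paper's — a finite cover of a compact set, an $n$-independent modulus of continuity for $S_n$, pointwise convergence in probability at finitely many points, and the positive gap between $\min_\Theta S$ and the values of $S$ outside the two $\epsilon$-balls — but it packages them differently. You factor the argument through the explicit intermediate claim $\sup_\Theta|S_n-S|\to 0$ in probability and then run the standard well-separated-minimum argument, whereas the paper never asserts uniform convergence: it covers only $B^c$ (the complement of the two balls) by $\alpha$-balls and proves directly that $\inf_{B^c}S_n-S_n(\theta_0)\ge\delta/2-2\Delta_n$, where $\Delta_n$ is the maximum of $|S_n-S|$ over the finitely many centers together with $\theta_0$. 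Your route is slightly stronger (uniform convergence comes out as a by-product) at essentially no extra cost, and its most valuable feature is the closing observation: uniform continuity of each individual $S_n$, which is all the lemma literally hypothesizes, would not suffice (a spike of fixed depth, shrinking width and drifting location converges pointwise everywhere, yet its argmin can stay away from $\{\theta_0,\theta_0'\}$); what is genuinely needed is equicontinuity of the family $(S_n)$. The paper uses this tacitly — its $\alpha$ is chosen once and then used for every $n$ — and it holds in the application because the Lipschitz constant of $S_n$ in Lemma~\ref{lip} is numerical. One cosmetic repair to your second step: since your $B_\epsilon$ is defined with closed balls, $K=\Theta\setminus B_\epsilon$ is relatively open, not compact as you claim; either define $B_\epsilon$ with strict inequalities, or replace $\inf_K S$ by $\min_{\overline{K}}S$ over the compact closure $\overline{K}$, which still excludes $\theta_0$ and $\theta_0'$, so that $\eta>0$ and the rest of your argument goes through unchanged.
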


This is a classical result in the theory of minimum contrast estimators, when $\theta_0=\theta'_0$ (see \cite{vandervaart} or \cite{DCD}).
We reproduce the proof  since it is slightly adapted to the case of two argmins. 

\begin{proof} Let $\epsilon>0$ and $B$ be the union of the open ball with center $\theta_0$ and radius $\epsilon$
and the open ball with center $\theta_0'$ and radius $\epsilon$. 
Since $S$ is continuous and $B^c\subset\Theta$ is a compact set, there exists $\theta_\epsilon \in B^c$ such that 
$S(\theta_\epsilon)=\inf_{B^c} S$.
Using the assumption, since $\theta_\epsilon\neq \theta_0$, and $\theta_\epsilon\neq \theta_0'$
$$\delta:=S(\theta_\epsilon)-S(\theta_0)>0.$$
Since $S_n$ is uniformly continuous, there exists $\alpha>0$ such that 
$$\forall \theta, \theta' \qquad \|\theta-\theta'\|< \alpha \Rightarrow
|S_n(\theta)-S_n(\theta')|\leq \delta/2.$$
Moreover $B^c$ is a compact set then there exists a finite set $(\theta_i)$ such that $B^c\subset \cup_{i=1}^I B(\theta_i,\alpha)$.
Denote $\Delta_n:=\max_{0\leq i \leq I}| S_n(\theta_i)-S(\theta_i)|$. The assumption ensures that $\Delta_n$ tends to 0 in probability.
Let $\theta\in B^c$. There exists $1\leq i\leq I$ such that $\|\theta-\theta_i\|<\alpha$, and then $|S_n(\theta)-S_n(\theta_i)|\leq \delta/2$.
Thus 
\begin{align*}
S_n(\theta)-S_n(\theta_0)&=(S_n(\theta)-S_n(\theta_i))+(S_n(\theta_i)-S(\theta_i))\\
&+(S(\theta_i)-S(\theta_0))+(S(\theta_0)-S_n(\theta_0))\\
& \geq -\delta/2-\Delta_n+\delta-\Delta_n
\end{align*}
using that $S(\theta_i)-S(\theta_0)\geq S(\theta_\epsilon)-S(\theta_0)=\delta$.
Then \begin{align*}
\inf_{\theta\in B^c} S_n(\theta)-S_n(\theta_0) \geq \delta/2-2\Delta_n.
\end{align*}
Now, if $\|\hat \theta_n-\theta_0\|\geq \epsilon$ and $\|\hat \theta_n-\theta_0'\|\geq \epsilon$
then $\hat\theta_n \in B^c$ and 
$$\inf_{\theta \in \Theta} S_n(\theta)=S_n(\hat\theta_n)=\inf_{\theta \in B^c} S_n(\theta).$$
In particular $\inf_{\theta\in B^c} S_n(\theta)\leq S_n(\theta_0)$ so that 
\begin{eqnarray*}
\P(\|\hat \theta_n-\theta_0\|\geq \epsilon \text{ and }\|\hat \theta_n-\theta_0'\|\geq \epsilon)&\leq &\P(0\geq \inf_{\theta \in  B^c} S_n(\theta)- S_n(\theta_0) \geq \delta/2- 2\Delta_n)\\
&\leq &
\P(\Delta_n\geq \delta/4) \longrightarrow 0
\end{eqnarray*}
since $\Delta_n$ tends to 0 in probability.
\end{proof}
\color{black}

\subsection{Proof of Theorem~\ref{normasymp} (asymptotic normality)}
\label{proof:normasymp}

The Taylor's theorem and the definition of $\hat\theta_n$ give
$$
\dot S_n(\hat \theta_n) =  \dot{S_n}(\theta_0) + \ddot{S_n}(\theta_n^*)(\hat \theta_n -\theta_0)=0,
$$
where $\theta_n^*$ lies in the line segment with extremities $\theta_0$ and $\hat \theta_n$. Equivalently we have,
$$
 \ddot{S_n}(\theta_n^*)(\hat \theta_n -\theta_0) = - \dot{S_n}(\theta_0).
$$
We recall that 
$$
S_n(\theta_0)= \frac{1}{n(n-1)} \sum_{k \neq j} \sum_{l=-4}^4 Z^l_{k}(\theta_0) Z^l_{j}(\theta_0) 
$$
and 
$$
\dot S_n(\theta_0)= \frac{2}{n(n-1)} \sum_{k \neq j} \sum_{l=-4}^4 \dot Z^l_{k}(\theta_0) Z^l_{j}(\theta_0) 
$$
and
$$
\ddot S_n(\theta_0)=  \frac{2}{n(n-1)}  \sum_{k \neq j} \sum_{l=-4}^4\ddot Z^l_{k}(\theta_0) Z^l_{j}(\theta_0)+ \dot Z^l_{k}(\theta_0) \dot Z^l_{j}(\theta_0)^\top.
$$
\textbf{Step 1}- Let us prove that 
$$
\sqrt{n} \dot S_n(\theta_0) \overset{d}{\longrightarrow} \mathcal{N}(0,V). 
$$
We remind by Lemma~\ref{EqMtheta} that $
J^l(\theta_0)=0.
$
Hence
$$
\E(\dot S_n(\theta_0))= 2 \sum_{l=-4}^4 \dot J^l(\theta_0) J^l(\theta_0) =0.
$$
We can break down $\dot S_n(\theta_0)$ in the following way:
\begin{eqnarray*}
\dot S_n(\theta_0) &=& \frac{2}{n(n-1)}   \sum_{k \neq j} \sum_{l=-4}^4 (\dot Z^l_{k}(\theta_0) - \dot J^l(\theta_0) +  \dot J^l(\theta_0)) Z^l_{j}(\theta_0) \\
&=&  \frac{4}{n(n-1)} \sum_{k< j} \sum_{l=-4}^4( \dot Z^l_{k}(\theta_0) -  \dot J^l(\theta_0)) Z^l_{j}(\theta_0)  
+ \frac{2}{n} \sum_{j=1}^n \sum_{l=-4}^4  \dot J^l(\theta_0)  Z^l_{j}(\theta_0)  \\
&=:& A_n + B_n.
\end{eqnarray*}
Note that $A_n$ and $B_n$ are centered variables.  Let us show  that $\sqrt{n}A_n= o_P(1)$.
Note that the variables $W_{jk}:= \left(\sum_{l=-4}^4  ( \dot Z_{k}^l(\theta_0)  -  \dot J^l(\theta_0))  Z_{j}^l(\theta_0) \right)_{k<j}$  are centered and uncorrelated. Then
$$\E(\|A_n\|^2)=\E\left(\left\|\frac{4}{n(n-1)} \sum_{k< j}W_{jk}\right\|^2\right)
=\frac{8}{n(n-1)} \E\|W_{12}\|^2.$$
Using Lemma \ref{bornes}, there exists $C>0$ such that 
$$\|W_{12}\|\leq \sum_{l=-4}^4 \frac{2(1+|l|)}{\sqrt{2}\pi}\frac{1}{2\pi}\leq C$$ so that $\E(\|\sqrt{n}A_n\|^2)\leq {8C^2}/{(n-1)} $.
Finally, invoking Markov inequality we have that $\sqrt{n}A_n= o_P(1)$. 
 We can write $\sqrt{n}B_n$ in the following way: 
 \[
 \sqrt{n} B_n =  \frac{2}{\sqrt{n}} \sum_{k=1}^n U_k(\theta_0) ,
 \]
where  we set  $U_{k}(\theta_0):= \sum_{l =-4}^4   \dot J^l(\theta_0) Z^l_{k}(\theta_0) $.
  Note that  the $U_{k}(\theta_0)$'s   are i.i.d and centered.  Invoking the central limit theorem, we have that
  $$
  \frac{1}{\sqrt{n}} \sum_{k=1}^n U_k(\theta_0) \overset{d}{\longrightarrow} \mathcal{N}(0,V/4),
  $$
where $V/4$ is the covariance matrix of $U_1(\theta_0)$, equal to $\E(U_1(\theta_0)U_1(\theta_0)^\top)$.

\medskip
\textbf{Step 2}- Let us prove that $\ddot S_n(\theta_n^*) \overset{P}{\longrightarrow}  \mathcal{A}(\theta_0)$
where $\mathcal{A}(\theta_0)= 2 \sum_{l=-4}^4 \dot J^l(\theta_0) \dot J^l(\theta_0)^\top $.  
First, we have
\begin{eqnarray*}
\E(\ddot S_n(\theta_0)) &=& \ddot S(\theta_0)=2\sum_{l=-4}^4 (\ddot J^l(\theta_0) \underbrace{J^l(\theta_0)}_{=0}+ \dot J^l(\theta_0) \dot J^l(\theta_0)^\top) \\
&=& 2 \sum_{l=-4}^4 \dot J^l(\theta_0) \dot J^l(\theta_0)^\top=\mathcal{A}(\theta_0)  .
\end{eqnarray*}
Next we write the decomposition
$$
\| \ddot S_n(\theta_n^*)  - \mathcal{A}(\theta_0) \|_F \leq \| \ddot S_n(\theta_n^*) -  \ddot S_n(\theta_0)   \|_F  + \| \ddot S_n(\theta_0) - \E \ddot S_n(\theta_0)  \|_F
$$
We get due to the Lipschitz property of $\ddot S_n$ stated in Lemma \ref{lip} that
\[
\P  \left ( \| \ddot S_n(\theta_0)  -\ddot S_n(\theta_n^*) \|_F  \geq \varepsilon \right ) \leq \P  \left ( K\| \theta_n^*  -\theta_0 \|  \geq \varepsilon \right ) \rightarrow 0,
\]
 because $\hat \theta_n \overset{P}{\longrightarrow} \theta_0.$  
Last, let us focus on the term $ \| \ddot S_n(\theta_0) - \E \ddot S_n(\theta_0)  \|_F$. We  remind that  
$$
\ddot S_n(\theta_0)- \E \ddot S_n(\theta_0) =  \frac{2}{n(n-1)}  \sum_{k \neq  j} \sum_{l=-4}^4 \left(\ddot Z^l_{k}(\theta_0) Z^l_{j}(\theta_0)+ \dot Z^l_{k}(\theta_0) \dot Z^l_{j}(\theta_0)^\top
-\dot J^l(\theta_0) \dot J^l(\theta_0)^\top\right).
$$
From now on, we drop indices $l$ and $\theta_0$ to simplify the notation. We center the variables in order to find uncorrelatedness:
\begin{eqnarray*}
\ddot Z_{k} Z_{j}+\dot Z_{k} \dot Z_{j}^\top-\dot J\dot J^\top=
\underbrace{\left(\ddot Z_{k}-\ddot J\right) Z_{j}}_{A}+
\underbrace{\ddot J Z_j}_{B}+ 
\underbrace{(\dot Z_{k} -\dot J)(\dot Z_{j}-\dot J)^\top}_{C}
\\+\underbrace{\dot J(\dot Z_{j}-\dot J)^\top}_{D}
+\underbrace{( \dot Z_{k} -\dot J)(\dot J)^\top}_{E}
\end{eqnarray*}
(remind that $\E(Z_j)=J^l(\theta_0)=0$). Then 
$
\ddot S_n(\theta_0)- \E \ddot S_n(\theta_0) = 2\sum_{l=-4}^4 (A+B +C+D+E)
$
where 
\begin{align*}
A&=\frac{2}{n(n-1)}  \sum_{k < j}\left(\ddot Z_{k}-\ddot J\right) Z_{j}\\
B&=\frac{1}{n}  \sum_{ j=1}^n\ddot JZ_j\\
C&=\frac{2}{n(n-1)}  \sum_{k < j}(\dot Z_{k} -\dot J)(\dot Z_{j}-\dot J)^\top\\
D&=\frac{1}{n}  \sum_{j=1}^n\dot J(\dot Z_{j}-\dot J)^\top\\
E&=\frac{1}{n}  \sum_{k =1}^n ( \dot Z_{k} -\dot J)\dot J^\top=D^\top
\end{align*}
Using the weak law of large numbers for uncorrelated centered variables, we obtain that 
$\| \ddot S_n(\theta_0)  - \E \ddot S_n(\theta_0)  \|_F \overset{P}{\to} 0$ which completes the step 2.

Finally it is sufficient to apply Slutsky's Lemma to obtain
the theorem.

\subsection{Estimation of the covariance}\label{sec:estimV}

\begin{proposition}\label{estimV} Consider notation and assumptions of Theorem~\ref{normasymp}.
 Let $V=4\E(U_1U_1^\top)$ where $U_1= \sum_{l=-4}^4 Z^l_{1}(\theta_0)\dot J^l(\theta_0) .$ Then $$\frac{4}{n^3}\sum_{1\leq k, j,j'\leq n}\sum_{-4\leq l,l'\leq 4} Z_k^{l}(\hat\theta_n)Z_k^{l'}(\hat\theta_n)\dot Z_j^{l}(\hat\theta_n)(\dot Z_{j'}^{l'}(\hat\theta_n))^\top.$$ tends almost surely toward $V$ when $n$ tends to $+\infty$.
\end{proposition}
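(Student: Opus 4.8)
The plan is to exploit the fact that $\hat V_n$ is not a U-statistic but a product of three \emph{ordinary} empirical means, all evaluated at the single random point $\hat\theta_n$, and then to control it through a uniform strong law of large numbers combined with the consistency of $\hat\theta_n$. Since the sums over $k$, $j$ and $j'$ decouple, I would first rewrite
$$\hat V_n = 4\sum_{l=-4}^4\sum_{l'=-4}^4 \hat a_{l,l'}(\hat\theta_n)\,\hat b_l(\hat\theta_n)\,\bigl(\hat b_{l'}(\hat\theta_n)\bigr)^\top,$$
where $\hat a_{l,l'}(\theta):=\tfrac1n\sum_{k=1}^n Z_k^l(\theta)Z_k^{l'}(\theta)$ is scalar and $\hat b_l(\theta):=\tfrac1n\sum_{j=1}^n \dot Z_j^l(\theta)\in\R^3$. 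Setting $a_{l,l'}(\theta):=\E[Z_1^l(\theta)Z_1^{l'}(\theta)]$ and $b_l(\theta):=\dot J^l(\theta)$, and noting that $\E[\dot Z_1^l(\theta)]=\dot J^l(\theta)$ (differentiate the identity $\E[Z_1^l(\theta)]=J^l(\theta)$ under the integral sign, legitimate by the bounds of Lemma~\ref{bornes}), the expansion of $U_1U_1^\top$ gives $V=4\sum_{l,l'} a_{l,l'}(\theta_0)\,b_l(\theta_0)\,(b_{l'}(\theta_0))^\top$. Because the double sum is finite and the map $(a,u,v)\mapsto a\,uv^\top$ is continuous, it then suffices to prove the componentwise convergences $\hat a_{l,l'}(\hat\theta_n)\to a_{l,l'}(\theta_0)$ and $\hat b_l(\hat\theta_n)\to b_l(\theta_0)$ almost surely.

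The heart of the argument is a uniform strong law of large numbers on the compact set $\Theta$. The summands $\theta\mapsto Z_k^l(\theta)Z_k^{l'}(\theta)$ and $\theta\mapsto \dot Z_j^l(\theta)$ are i.i.d.\ across their index, uniformly bounded over $\Theta$ (Lemma~\ref{bornes}, points 2 and 3), and Lipschitz in $\theta$ with a deterministic Lipschitz constant (Lemma~\ref{bornes}, point 3, and Lemma~\ref{bornes2}). The classical uniform SLLN therefore applies: pointwise a.s.\ convergence on a finite $\varepsilon$-net of $\Theta$ together with the equicontinuity furnished by the Lipschitz bounds yields
$$\sup_{\theta\in\Theta}\bigl|\hat a_{l,l'}(\theta)-a_{l,l'}(\theta)\bigr|\longrightarrow 0\quad\text{a.s.},\qquad \sup_{\theta\in\Theta}\bigl\|\hat b_l(\theta)-b_l(\theta)\bigr\|\longrightarrow 0\quad\text{a.s.}$$
The limits $a_{l,l'}$ and $b_l$ are themselves continuous on $\Theta$, again by dominated convergence using the same bounds.

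Finally I would combine these two ingredients through the split
$$\bigl|\hat a_{l,l'}(\hat\theta_n)-a_{l,l'}(\theta_0)\bigr|\leq \sup_{\theta\in\Theta}\bigl|\hat a_{l,l'}(\theta)-a_{l,l'}(\theta)\bigr|+\bigl|a_{l,l'}(\hat\theta_n)-a_{l,l'}(\theta_0)\bigr|,$$
and similarly for $\hat b_l$: the first term vanishes a.s.\ by the uniform SLLN, while the second vanishes by continuity of the limit and the consistency of $\hat\theta_n$. The main obstacle is the mode of convergence: Theorem~\ref{consistance} only provides consistency \emph{in probability}, so to reach the \emph{almost sure} statement I would first upgrade it to a.s.\ consistency. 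This is available here because the very same uniform SLLN, applied to the U-statistic $S_n$ (whose kernel is bounded and Lipschitz by Lemmas~\ref{bornes}--\ref{bornes2}, so that its Hoeffding decomposition converges uniformly a.s.), gives $\sup_{\theta\in\Theta}|S_n(\theta)-S(\theta)|\to0$ a.s.; the pathwise version of Lemma~\ref{lem:convmin} then forces the argmin $\hat\theta_n$ to converge a.s.\ to the unique minimizer $\theta_0$. Once this is in place, the remaining bookkeeping over the finite index set $\{-4,\dots,4\}^2$ and the continuity of matrix multiplication finish the proof routinely.
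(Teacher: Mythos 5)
Your proof is correct, and it takes a cleaner route than the paper's. The paper keeps the triple sum nested: it first applies the strong law of large numbers to $\frac{4}{n}\sum_k U_k(\theta_0)U_k(\theta_0)^\top$, then replaces each $U_kU_k^\top$ by the inner empirical means $\frac{1}{n^2}\sum_{j,j'}\dot Z_j^l(\dot Z_{j'}^{l'})^\top$ via an ad hoc double-limit exchange lemma (``if $v_{nk}\to v_k$ uniformly with bounds and $n^{-1}\sum_k v_k\to v$, then $n^{-1}\sum_k v_{nk}\to v$''), and finally replaces $\theta_0$ by $\hat\theta_n$ with only the phrase ``we can use the consistency of $\hat\theta_n$''. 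Your factorization of the statistic into $4\sum_{l,l'}\hat a_{l,l'}(\hat\theta_n)\,\hat b_l(\hat\theta_n)(\hat b_{l'}(\hat\theta_n))^\top$ dispenses with the nested-limit lemma entirely, and your uniform strong law over the compact $\Theta$ (legitimate here, since the summands are i.i.d., bounded and Lipschitz with deterministic constants by Lemmas~\ref{bornes} and~\ref{bornes2}) handles the randomness of the evaluation point in one stroke. Most importantly, you correctly identify that the final plug-in step cannot rest on Theorem~\ref{consistance}, which only gives consistency in probability, and your upgrade to almost sure consistency (uniform a.s.\ convergence of $S_n$ to $S$ via the Hoeffding decomposition into bounded uncorrelated terms, then the pathwise argmin argument of Lemma~\ref{lem:convmin} with the unique minimizer $\theta_0$ under the assumptions of Theorem~\ref{normasymp}) is exactly the argument the paper itself uses, but only later and in a different place --- inside the proof of Proposition~\ref{varthetachap} --- without being invoked in its proof of this proposition. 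In short: the paper's proof is shorter at $\theta_0$ but terse on the transfer to $\hat\theta_n$; yours is slightly longer but self-contained and rigorous precisely where the paper glosses.
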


Thus we obtain a consistent estimator for $V$ (that allows to estimate the covariance $\Sigma$). Nevertheless this estimator is biased. Notice that the quantity
$$ \frac{4}{n(n-1)(n-2)}\sum_{k=1}^n\sum_{j\neq k}\sum_{j'\notin\{k,j\}}\sum_{-4\leq l,l'\leq 4} Z_k^{l}(\theta_0)Z_k^{l'}(\theta_0)\dot Z_j^{l}(\theta_0)(\dot Z_{j'}^{l'}(\theta_0))^\top$$
has expectation 
$$ 4\sum_{-4\leq l,l'\leq 4} \E[Z_1^{l}(\theta_0)Z_1^{l'}(\theta_0)]\dot J^{l}(\theta_0)(\dot J^{l'}(\theta_0))^\top=V$$
and we could also prove (with some additional technicalities in the following proof about the uniform convergence in $k$) that it tends almost surely toward $V$.
However, we lose the "unbiased" property when replacing $\theta_0$ by $\hat \theta_n$.

\subsubsection*{Proof of Proposition~\ref{estimV}}
Let $U_k=\sum_{l=-4}^4 Z_k^l(\theta_0)\dot J^l(\theta_0)$. The law of large numbers gives
$$V=\E(4U_1U_1^\top)=\lim_{n\to \infty} \frac{4}{n}\sum_{k=1}^n U_kU_k^\top$$
where the convergence is almost sure.
Moreover
$$U_kU_k^\top=\sum_{-4\leq l,l'\leq 4} Z_k^{l}Z_k^{l'}\dot J^{l}(\dot J^{l'})^\top=\lim_{n\to \infty}\frac1{n^2}
\sum_{-4\leq l,l'\leq 4} Z_k^{l}Z_k^{l'}\sum_{1\leq j,j'\leq n}\dot Z_j^{l}(\dot Z_{j'}^{l'})^\top$$
where the convergence is almost sure and we have dropped the $\theta_0$ for the sake of simplicity. This convergence is uniform in $k$ in the following sense: there exists a set with probability 1 for which for any $\varepsilon>0$, there exists $N\geq 1$ such that for all $n\geq N$ and for all $1\leq k\leq n$
$$\left\| \frac1{n^2}
\sum_{-4\leq l,l'\leq 4} Z_k^{l}Z_k^{l'}\sum_{1\leq j,j'\leq n}\dot Z_j^{l}(\dot Z_{j'}^{l'})^\top-U_kU_k^\top\right\|\leq \varepsilon$$
Indeed 
\begin{align*}
&\left\| \frac1{n^2}
\sum_{-4\leq l,l'\leq 4} Z_k^{l}Z_k^{l'}\sum_{1\leq j,j'\leq n}\dot Z_j^{l}(\dot Z_{j'}^{l'})^\top-U_kU_k^\top\right\|\\
&=
\left\| 
\sum_{-4\leq l,l'\leq 4} Z_k^{l}Z_k^{l'}\left(\frac1{n^2}\sum_{1\leq j,j'\leq n}\dot Z_j^{l}(\dot Z_{j'}^{l'})^\top-\dot J^{l}(\dot J^{l'})^\top\right)\right\|\\
&\leq  \frac{1}{4\pi^2}\sum_{-4\leq l,l'\leq 4} \left\|\left(\frac1{n^2}\sum_{1\leq j,j'\leq n}\dot Z_j^{l}(\dot Z_{j'}^{l'})^\top-\dot J^{l}(\dot J^{l'})^\top\right)\right\|.
\end{align*}
Then we use the following lemma:
"If $v_{nk} \to v_k$ uniformly,  with $(v_{nk})$ and $(v_k)$ bounded, and if $n^{-1}\sum_{k=1}^n v_k \to v$ then $n^{-1}\sum_{k=1}^n v_{nk} \to v$."
To prove this lemma, notice that, for a given positive $\varepsilon$, for  $n$ large enough
\begin{eqnarray*}
\left|\frac{1}{n}\sum_{k=1}^n v_{nk}-v\right|
&\leq & \frac{1}{n}\sum_{k=1}^{N} \left|v_{nk}-v_k\right|+
\frac{1}{n}\sum_{k=N+1}^n \left| v_{nk}-v_k\right|+\left|\frac{1}{n}\sum_{k=1}^n v_{k}-v\right| \\
&\leq & \frac{N}{n}(\sup_{kn}|v_{nk}|+\sup_k |v_k|)+\frac{n-N}{n}\varepsilon+ \varepsilon \leq 3\varepsilon.
\end{eqnarray*}
That provides
$$V=\lim_{n\to \infty} \frac{4}{n^3}\sum_{1\leq k, j,j'\leq n}\sum_{-4\leq l,l'\leq 4} Z_k^{l}Z_k^{l'}\dot Z_j^{l}(\dot Z_{j'}^{l'})^\top$$
where the convergence is almost sure. Here all the $Z_k$ are depending on $\theta_0$, but we can use the consistency of $\hat\theta_n$ to finally assert
$$V=\lim_{n\to \infty} \frac{4}{n^3}\sum_{1\leq k, j,j'\leq n}\sum_{-4\leq l,l'\leq 4} Z_k^{l}(\hat\theta_n)Z_k^{l'}(\hat\theta_n)\dot Z_j^{l}(\hat\theta_n)(\dot Z_{j'}^{l'}(\hat\theta_n))^\top.$$

{ 
\subsection{Proof of Proposition~\ref{varthetachap}}
\label{sec:proof1varthetachap}}

 
We use the proof of Theorem~\ref{normasymp}. We  have seen that 
$$
 \ddot{S_n}(\theta_n^*)(\hat \theta_n -\theta_0) = - \dot{S_n}(\theta_0),
$$
with $\theta_n^*$ in the line segment with extremities $\theta_0$ and $\hat \theta_n$.
Recall that $\dot S_n(\theta_0) =A_n+B_n$ with 
\begin{eqnarray*}
 A_n &= & \frac{4}{n(n-1)} \sum_{k< j} \sum_{l=-4}^4( \dot Z^l_{k}(\theta_0) -  \dot J^l(\theta_0)) Z^l_{j}(\theta_0) \\
 B_n &=&
 =\frac{2}{n} \sum_{k=1}^n U_k(\theta_0) 
 \end{eqnarray*}
where   $U_{k}(\theta_0):= \sum_{l =-4}^4   \dot J^l(\theta_0) Z^l_{k}(\theta_0).$
  Note that  the $U_{k}(\theta_0)$'s   are i.i.d and centered so that 
 $$ \E\left\|\sum_k U_k(\theta_0)\right\|^2
 =\sum_{j=1}^3\Var\left(\sum_k U_{kj}(\theta_0)\right) 
 =\sum_{j=1}^3n\Var\left(U_{1j}(\theta_0)\right) 
 \leq n c_1$$
 using Lemma~\ref{bornes}. Here $c_1$ is a numerical constant. 
  Thus $\E\|B_n\|_1^2\leq  4c_1/n$.
In the same way  the variables $W_{jk}:= \left(\sum_{l=-4}^4  ( \dot Z_{k}^l(\theta_0)  -  \dot J^l(\theta_0))  Z_{j}^l(\theta_0) \right)_{k<j}$  are centered and uncorrelated, and also bounded.
Then
$$\E(\|n(n-1)A_n\|^{2})=\E\left(\left\| \sum_{k< j}W_{jk}\right\|^{2}\right)
=n(n-1)\E\left(\left\| W_{12}\right\|^{2}\right)\leq n(n-1)c_2.$$
Then $\E\left( \|A_n\|^{2}\right)\leq c_2/n$ and 
$\sup_n\E\left( n\|\dot S_n(\theta_0)\|^{2}\right)\leq 8c_1+2c_2<\infty$. 

In  the proof of Theorem~\ref{normasymp}, we noted that $\ddot S_n(\theta_n^*)
$ tends to  $\ddot S(\theta_0)$ in probability. Actually we can prove that  the convergence is almost sure. 
Indeed the strong law of large numbers is true for uncorrelated variables if their second moments have a common bound (see e.g. \cite{chung2001}) so that $$\ddot S_n(\theta_0)  -\ddot S(\theta_0)=\ddot S_n(\theta_0)  - \E \ddot S_n(\theta_0)  \overset{a.s.}{\longrightarrow} 0.$$
Since $\ddot S_n$ is continuous, it is sufficient to show that   the convergence of $\hat\theta_n$ towards $\theta$ is almost sure and this will imply that $\ddot S_n(\theta^*_n)$ converges almost surely towards $\ddot S_n(\theta_0)$ (recall that $\theta_n^*$ in the line segment with extremities $\theta_0$ and $\hat \theta_n$).
To do this, remark first that $ S_n(\theta)  - S(\theta)  \overset{a.s.}{\longrightarrow} 0$ by the strong law of large numbers for uncorrelated variables again (see the decomposition of $S_n-S$ in the proof of Proposition~\ref{variance}). Now, we come back to the proof of Lemma~\ref{lem:convmin} (in the case of a unique minimum $\theta_0$), with this new assumption that $S_n(\theta)$ tends to $S(\theta)$ almost surely. The proof shows that for any $\epsilon>0$ there exist $\delta(\epsilon)>0$ and $\Delta_n(\epsilon)$ which tends to 0 almost surely  such that
$$\|\hat\theta_n-\theta_0\|\geq \epsilon\Rightarrow \Delta_n(\epsilon)\geq \delta(\epsilon)/4.$$
Let $\Gamma=\cap_{p\geq 1}\{\Delta_n(1/p)\to 0\}$. This set has probability 1 and on this set, for any $\varepsilon>0$, taking $p \geq 1/\varepsilon$, there exists $N\geq 1$ such that for any $n\geq N$
$$\Delta_n(1/p)< \delta(1/p)/4 \text{ and then  } \|\hat\theta_n-\theta_0\|< (1/p)\leq \varepsilon.$$
This ensures that on the set $\Gamma$, $\hat\theta_n$ tends to $\theta_0$, and finally $\ddot S_n(\theta_n^*)$ tends to $\ddot S(\theta_0)$  almost surely.

Now, since $\ddot S(\theta_0)$ is assumed invertible, there exists $n_1$ such that for all $n\geq n_1$, $\ddot S_n(\theta_n^*) $ is invertible and 
$\|\ddot S_n(\theta_n^*)^{-1}\|_{op}\leq 2\|\ddot S_n(\theta_0)^{-1}\|_{op}:=C(\theta_0)$ a.s. 
Then 
$$n\|\hat \theta_n-\theta_0\|^{2}\leq
C(\theta_0)^2 n\|\dot S_n(\theta_0)\|^{2}\quad \text{ a.s.}$$
and 
$$\E(n\|\hat \theta_n-\theta_0\|^{2})\leq
C(\theta_0)^2\E( n\|\dot S_n(\theta_0)\|^{2})\leq  C(\theta_0)^2(8c_1+2c_2).$$
Moreover $\sup_{\theta\in \Theta}C(\theta)<\infty $ because 
$\Theta$ is a compact set and $\theta \mapsto \|\ddot S_n(\theta)^{-1}\|_{op}$ is continuous.

\subsection{Proof of Theorem~\ref{oracleineq} (nonparametric estimation)}
The proof of the oracle inequality is based on Lemma~\ref{lemadap} and Lemma~\ref{majnun} below. 
The conclusion follows, choosing $2\gamma=\epsilon/(1+\epsilon)$
and $\lambda=\gamma^{-1}\kappa(1-2P)^{-2}=2\kappa(1+\epsilon^{-1})(1-2P)^{-2}$,
{  and $q=(2s_0+1)/3$}.

Let us derive the rate of convergence, which is the second result of Theorem~\ref{oracleineq}. We use the notation of Lemma~\ref{lemadap} and the notation $\|.\|_{\ell}$ for the natural norm of $\ell^2(\C^\Z)$. Let $L\in \mathcal L$. 
 {Since 
$\nu_n(t)= \sum_{l\in \Z} \overline{t_{l}} (\widehat{f^{\star l}}- f^{\star l})$,
$$\sum_{l=-L}^L |\widehat{f^{\star l}}- f^{\star l}|^2=\nu_n({\widehat{f^{\star}_L}-f^{\star}_L})\leq \sup_{t\in B_L}\nu_n(t)\|{\widehat{f^{\star}_L}-f^{\star}_L}\|_{\ell}$$
where we denote $ f_L^\star$ the sequence in $\C^\Z$ such that 
$( f_L^\star)_l={f^{\star l}}$ if $-L\leq l \leq L$ and 0 otherwise.
Hence $\|{\widehat{f^{\star}_L}-f^{\star}_L}\|_{\ell}^2\leq \sup_{t\in B_L}\nu_n(t)\|{\widehat{f^{\star}_L}-f^{\star}_L}\|_{\ell}$ so that 
$\|{\widehat{f^{\star}_L}-f^{\star}_L}\|_{\ell}\leq \sup_{t\in B_L}\nu_n(t)$.
Then, using Lemma~\ref{majnun}
\begin{eqnarray*}
\E\sum_{l=-L}^L |\widehat{f^{\star l}}- f^{\star l}|^2
&=&\E\|{\widehat{f^{\star}_L}-f^{\star}_L}\|_{\ell}^2
\leq  \frac{\kappa}{(1-2P)^2} \frac{2L+1}{n}+\frac{C {(1+R^2)}}{n}\\
&\leq & C' {(1+R^2)}\frac{2L+1}{n}.
\end{eqnarray*}
Using Parseval's identity,
$$\E\|f-\hat f_L\|_2^2=\sum_{|l|>L} |f^{\star l}|^2+C' {(1+R^2)}\frac{2L+1}{n}\leq %
{R^2(1+L^2)^{-s}+C' {(1+R^2)}\frac{2L+1}{n}.}$$
Thus, the oracle inequality gives
\begin{eqnarray*}
\E\|\hat f_{\widehat{L}}-f\|_2^2&\leq & (1+2\epsilon)\min_{L\in \mathcal{L}} \left\{
 {R^2(1+L^2)^{-s}}+(C' {(1+R^2)}+2\lambda)\frac{2L+1}{n}\right\}\\
 &&+\frac{C {(1+R^2)}}{n}\\
 &\leq& C''  {R^2} n^{-2s/(2s+1)}
\end{eqnarray*}
 { 
choosing $L=L_0=\lfloor C n^{1/(2s+1)}\rfloor $. 
This choice is possible since $s\geq s_0$ and then $L_0 $ belongs to $\mathcal{L}$. 
}\\

\begin{lemma}
\label{lemadap}
 Let $\lambda>0$ and $\mathcal{L}$ be a finite set of resolution level and define 
$$\widehat{L}=\underset{L\in \mathcal{L}}{\argmin} \left\{-\sum_{l=-L}^L | \widehat{f^{\star l}}|^2+\lambda \frac{2L+1}{n}\right\}.$$
Then, for all $0<\gamma<1/2$, 
\begin{eqnarray*}
(1-2\gamma)\|\hat f_{\widehat{L}}-f\|_2^2\leq \min_{L\in \mathcal{L}} \left\{(1+2\gamma)\|\hat f_{ L}-f\|_2^2+2\lambda \frac{2L+1}{n}\right\}\\+
 {
\frac{1}{\gamma}\max_{L\in\mathcal{L}}\left( \sup_{t\in B_{L}}\nu_n^2(t)-\lambda \gamma\frac{2L+1}{n}\right)
}
\end{eqnarray*}
where  
 $B_L=\{t\in \C^{\Z}, \, \sum_{l\in \Z} |t_{l}|^2=1, \,t_l=0 \text{ if  }|l|>L \}$ 
and 
$\nu_n(t)= \sum_{l\in \Z} \overline{t_{l}} (\widehat{f^{\star l}}- f^{\star l})$.
\end{lemma}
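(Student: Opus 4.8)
The plan is to recognize the penalized criterion as a least-squares contrast and then run the standard model-selection argument. First I would introduce the uncentered empirical linear form $\mu_n(t)=\sum_{l\in\Z}\overline{t_l}\,\widehat{f^{\star l}}$ and the contrast $\gamma_n(t)=\|t\|_2^2-2\mu_n(t)$, viewing $\gamma_n$ as a function of a coefficient sequence via Parseval and writing $\langle\cdot,\cdot\rangle$ for the inner product attached to $\|\cdot\|_2$. Two facts are needed: the algebraic identity
$$\gamma_n(t)-\gamma_n(s)=\|t-f\|_2^2-\|s-f\|_2^2-2\nu_n(t-s),$$
obtained by expanding the squared norms and recalling $\nu_n(t)=\mu_n(t)-\langle t,f\rangle$; and the fact that the projection estimator satisfies $\gamma_n(\hat f_L)=-\sum_{l=-L}^L|\widehat{f^{\star l}}|^2$, since $\mu_n(\hat f_L)=\|\hat f_L\|_2^2$. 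Writing $\mathrm{pen}(L)=\lambda(2L+1)/n$, this means $\widehat L$ minimizes $\gamma_n(\hat f_L)+\mathrm{pen}(L)$, so that for every $L\in\mathcal L$,
$$\gamma_n(\hat f_{\widehat L})+\mathrm{pen}(\widehat L)\leq \gamma_n(\hat f_L)+\mathrm{pen}(L).$$
(All inner products here are real because $\hat f_L$ and $f$ are real-valued, so $\nu_n$ is real on the relevant span.)

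Plugging the identity into this basic inequality turns it into a bound on $\|\hat f_{\widehat L}-f\|_2^2$ whose only stochastic coupling is the cross term $2\nu_n(\hat f_{\widehat L}-\hat f_L)$. I would linearize it by the linearity of $\nu_n$ and bound
$$\nu_n(\hat f_{\widehat L}-\hat f_L)\leq \|\hat f_{\widehat L}-\hat f_L\|_2\,\sup_{t\in B_{L^*}}\nu_n(t),\qquad L^*:=\max(L,\widehat L),$$
since $\hat f_{\widehat L}-\hat f_L$ has frequencies in $[-L^*,L^*]$ (dividing by its $L^2$-norm places its coefficient sequence in $B_{L^*}$). Then Young's inequality $2ab\leq\gamma a^2+\gamma^{-1}b^2$, the triangle inequality for $\|\hat f_{\widehat L}-\hat f_L\|_2$, and $(x+y)^2\leq 2(x^2+y^2)$ produce the factor $(1-2\gamma)$ on the left, the factor $(1+2\gamma)$ in front of $\|\hat f_L-f\|_2^2$, and a leftover term $\gamma^{-1}\sup_{t\in B_{L^*}}\nu_n^2(t)$.

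The delicate bookkeeping, which I expect to be the main obstacle, is to convert this $L^*$-dependent fluctuation into the deterministic maximum over $\mathcal L$ in the statement while disposing of the random penalty $\mathrm{pen}(\widehat L)$. Here I would split
$$\gamma^{-1}\sup_{t\in B_{L^*}}\nu_n^2(t)=\gamma^{-1}\Bigl(\sup_{t\in B_{L^*}}\nu_n^2(t)-\lambda\gamma\tfrac{2L^*+1}{n}\Bigr)+\lambda\tfrac{2L^*+1}{n},$$
and exploit the elementary inequality $2L^*+1\leq (2L+1)+(2\widehat L+1)$, i.e. $\mathrm{pen}(L^*)\leq \mathrm{pen}(L)+\mathrm{pen}(\widehat L)$. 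The $\mathrm{pen}(\widehat L)$ thus created cancels the one carried from the basic inequality, leaving $2\,\mathrm{pen}(L)$; the bracketed term is bounded by $\max_{L'\in\mathcal L}\bigl(\sup_{t\in B_{L'}}\nu_n^2(t)-\lambda\gamma(2L'+1)/n\bigr)$ because $L^*\in\mathcal L$ ($\mathcal L$ being an interval of integers is stable under $\max$). Taking the minimum over $L\in\mathcal L$ yields exactly
$$(1-2\gamma)\|\hat f_{\widehat L}-f\|_2^2\leq \min_{L\in\mathcal L}\Bigl\{(1+2\gamma)\|\hat f_L-f\|_2^2+2\lambda\tfrac{2L+1}{n}\Bigr\}+\frac1\gamma\max_{L\in\mathcal L}\Bigl(\sup_{t\in B_L}\nu_n^2(t)-\lambda\gamma\tfrac{2L+1}{n}\Bigr),$$
which is the claimed inequality.
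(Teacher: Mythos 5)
Your proof is correct and follows essentially the same route as the paper's: the basic inequality from the definition of $\widehat L$, rewriting the criterion so that the only stochastic term is $2\nu_n(\hat f_{\widehat L}^\star-\hat f_L^\star)$, normalizing that term into $B_{L\vee\widehat L}$, applying Young's inequality with parameter $\gamma$ together with $(x+y)^2\leq 2(x^2+y^2)$, and absorbing the random penalty via $2(L\vee\widehat L)+1\leq(2L+1)+(2\widehat L+1)$. Your contrast-function framing $\gamma_n(t)=\|t\|_2^2-2\mu_n(t)$ is just a repackaging of the paper's direct expansion of $\|\hat f_L-f\|_2^2$, and note that $L\vee\widehat L\in\mathcal L$ simply because it equals $L$ or $\widehat L$ (no interval structure of $\mathcal L$ is needed).
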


\begin{proof}
We recall that the dot product $\langle f,g\rangle $ means $\frac{1}{2\pi}\int \overline{f(x)}g(x)dx$ and  that $\|.\|_2$ is the associated norm. Usual Fourier analysis gives for any $L$:
\begin{align*}
\|\hat f_L-f\|_2^2&=-\|\hat f_L\|_2^2+2(\|\hat f_L\|_2^2-\langle \hat f_L,f\rangle )+\|f\|_2^2\\
&=-\sum_{l=-L}^{ L}  | \widehat{f^{\star l}}|^2+2\sum_{l=-L}^L\overline{\widehat{f^{\star l}}}(\widehat{f^{\star l}}-f^{\star l})+\|f\|_2^2\\
&=-\sum_{l=-L}^{L} | \widehat{f^{\star l}}|^2+2\nu_n(\widehat f_L^\star)+\|f\|_2^2
\end{align*}
where we denote $\widehat f_L^\star$ the sequence in $\C^\Z$ such that 
$(\widehat f_L^\star)_l=\widehat{f^{\star l}}$ if $-L\leq l \leq L$ and 0 otherwise.

Now let $L$ be an arbitrary resolution level in $\mathcal{L}$. Using the definition of $\widehat{L}$, 
$$-\sum_{l=-\widehat{L} }^{\widehat{L}} | \widehat{f^{\star l}}|^2+\lambda \frac{2\widehat{L}+1}{n}\leq -\sum_{l=-L}^L  | \widehat{f^{\star l}}|^2+\lambda \frac{2L+1}{n}.$$
Thus 
$$\|\hat f_{\widehat{L}}-f\|_2^2-2\nu_n(\hat f_{\widehat{L}}^\star)+\lambda \frac{2\widehat{L}+1}{n}\leq \|\hat f_L-f\|_2^2-2\nu_n(\hat f_L^\star)+\lambda \frac{2L+1}{n}$$
which leads to 
$$\|\hat f_{\widehat{L}}-f\|_2^2\leq \|\hat f_L-f\|_2^2+2\nu_n(\widehat f_{\widehat{L}}^\star-\widehat f_L^\star)-\lambda \frac{2\widehat{L}+1}{n}+\lambda \frac{2L+1}{n}.$$
But, denoting by $\|.\|_{\ell}$ the natural norm of $\ell^2(\C^\Z)$
\begin{align*}
2\nu_n(\widehat f_{\widehat{L}}^\star-\widehat f_L^\star)&=
2\nu_n\left(\frac{\widehat f_{\widehat{L}}^\star-\widehat f_L^\star}{\|\widehat f_{\widehat{L}}^\star-\widehat f_L^\star\|_{\ell}}\right)\|\widehat f_{\widehat{L}}^\star-\widehat f_L^\star\|_{\ell}\\
2\left|\nu_n(\widehat f_{\widehat{L}}^\star-\widehat f_L^\star)\right|&\leq \gamma\|\widehat f_{\widehat{L}}^\star-\widehat f_L^\star\|_{\ell}^2 + \frac{1}{\gamma}\left|\nu_n\left(\frac{\widehat f_{\widehat{L}}^\star-\widehat f_L^\star}{\|\widehat f_{\widehat{L}}^\star-\widehat f_L^\star\|_{\ell}}\right)\right|^2\\
&\leq 2\gamma (\|\hat f_{\widehat{L}}-f\|_2^2 +
\| f-\hat f_L\|_2^2 )
+ \frac{1}{\gamma}\sup_{t\in B_{L\vee \widehat{L}}}|\nu_n(t)|^2
\end{align*}
where $L\vee \widehat{L}=\max(L, \widehat{L})$. 
Thus 
\begin{align*}
\|\hat f_{\widehat{L}}-f\|_2^2(1-2\gamma) \leq \|\hat f_L-f\|_2^2(1+2\gamma)+ \frac{1}{\gamma}\sup_{t\in B_{L\vee \widehat{L}}}|\nu_n(t)|^2-\lambda \frac{2\widehat{L}+1}{n}+\lambda \frac{2L+1}{n}\\
\leq \|\hat f_L-f\|_2^2(1+2\gamma)+ \frac{1}{\gamma}\left(\sup_{t\in B_{L\vee \widehat{L}}}|\nu_n(t)|^2-\lambda \gamma \frac{2\widehat{L}+2L+2}{n}\right)+2\lambda \frac{2L+1}{n}\\
\leq \|\hat f_L-f\|_2^2(1+2\gamma)+2\lambda \frac{2L+1}{n}+\frac{1}{\gamma}\max_{L'\in\mathcal{L}}\left(\sup_{t\in B_{L'}}|\nu_n(t)|^2-\lambda \gamma \frac{2L'+1}{n}\right).
\end{align*}

\end{proof}

\begin{lemma} \label{majnun}
Assume Assumption  Assumption \ref{hypf} and \ref{hypthetabis}. 
Assume that $f$ belongs to the Sobolev ellipsoid $W(s,R)$
{ 
with $s\geq 1$.
Assume that 
$\mathcal{L}=\{0, \dots, L_n\}$ with  ${L}_n$ such that  $L_n^3\leq C_{\mathcal L}n^{1/q}$ for some $q>1$. 
Then, with the notation of Lemma~\ref{lemadap}, 
for all $\kappa> 3/(2\pi^2)$, 
}
$$\E\max_{L\in \mathcal{L}}\left( \sup_{t\in B_L}|\nu_n(t)|^2-\frac{\kappa}{(1-2P)^2} \frac{2L+1}{n}\right)\leq \frac{C {(1+R^2)}}{n},$$
where $C$ is  a positive constant depending on 
 {$P,q, C_{\mathcal L}, \kappa$}. 
\end{lemma}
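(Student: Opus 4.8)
The plan is to reduce the statement to a Talagrand-type concentration inequality for the ``oracle'' empirical process obtained by freezing $\hat\theta_n$ at $\theta_0$, and to treat the two remainders produced by the plug-in separately. First observe that the supremum is explicit: since $\nu_n(t)=\sum_l\overline{t_l}(\widehat{f^{\star l}}-f^{\star l})$ and $B_L$ is the unit ball supported on $\{|l|\le L\}$, Cauchy--Schwarz gives $\sup_{t\in B_L}|\nu_n(t)|^2=\sum_{|l|\le L}|\widehat{f^{\star l}}-f^{\star l}|^2$. Using $\widehat{f^{\star l}}=M^l(\hat\theta_n)^{-1}\widehat{g^{\star l}}$, $f^{\star l}=M^l(\theta_0)^{-1}g^{\star l}$ and $g^{\star l}=M^l(\theta_0)f^{\star l}$, I decompose
\begin{equation*}
\widehat{f^{\star l}}-f^{\star l}=\underbrace{M^l(\theta_0)^{-1}(\widehat{g^{\star l}}-g^{\star l})}_{o_l}+\underbrace{(M^l(\hat\theta_n)^{-1}-M^l(\theta_0)^{-1})(\widehat{g^{\star l}}-g^{\star l})}_{c_l}+\underbrace{(M^l(\hat\theta_n)^{-1}-M^l(\theta_0)^{-1})g^{\star l}}_{b_l}.
\end{equation*}
With $(\|o\|+\|c\|+\|b\|)^2\le(1+\eta)\|o\|^2+2(1+\eta^{-1})(\|c\|^2+\|b\|^2)$ this splits $\sup_{t\in B_L}|\nu_n(t)|^2$ into an oracle part $(1+\eta)\sum_{|l|\le L}|o_l|^2$, which will carry the penalty, plus cross and bias parts.

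For the oracle part, $\sum_{|l|\le L}|o_l|^2=\sup_{t\in B_L}|\nu_n^{(0)}(t)|^2$ is the square of the supremum of an i.i.d.\ centered empirical process with summand $\phi_t(x)=\sum_{|l|\le L}\overline{t_l}M^l(\theta_0)^{-1}e^{-ilx}/(2\pi)$. I would apply a Talagrand-type inequality in the form controlling the expectation of the positive part of $\sup_{t\in B_L}|\nu_n^{(0)}(t)|^2-(1+\eta)H_L^2$, with $H_L^2=\frac{2L+1}{4\pi^2n(1-2P)^2}\ge\E\sup_{t\in B_L}|\nu_n^{(0)}(t)|^2$, sup-norm proxy $b_L=\sqrt{2L+1}/(2\pi(1-2P))$, and variance proxy $v_L\le\|g\|_\infty/(2\pi(1-2P)^2)$. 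The variance bound uses the Toeplitz estimate $\sum_{l,l'}\overline{a_l}a_{l'}g^{\star(l-l')}=\frac1{2\pi}\int g\,|\sum_l a_le^{ilx}|^2dx\le\|g\|_\infty\sum_l|a_l|^2$, together with $\|g\|_\infty\le\|f\|_\infty\le C_sR$ by Sobolev embedding (valid since $s\ge1$), so $v_L=O(R)$ is bounded in $L$. Because $\kappa>3/(2\pi^2)=6/(4\pi^2)$ leaves a fixed multiplicative gap between the threshold $\kappa(1-2P)^{-2}(2L+1)/n$ and $(1+\eta)H_L^2$, the Talagrand bound produces geometrically decaying exponentials $\frac{v_L}{n}e^{-cL/v_L}$ whose sum over $L\in\mathcal L$ is $O(v_L^2/n)=O(R^2/n)$, the companion $b_L$-term being of order $e^{-c\sqrt n}$ and hence negligible.

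The bias part is harmless. Since $|M^l(\hat\theta_n)^{-1}-M^l(\theta_0)^{-1}|\le(1-2P)^{-2}|M^l(\hat\theta_n)-M^l(\theta_0)|\le(1-2P)^{-2}(2+|l|)\|\hat\theta_n-\theta_0\|$ and $g^{\star l}=M^l(\theta_0)f^{\star l}$ with $\sum_l(2+|l|)^2|f^{\star l}|^2\le CR^2$ (again because $s\ge1$), one gets $\sum_l|b_l|^2\le C(1-2P)^{-2}R^2\|\hat\theta_n-\theta_0\|^2$, whose expectation is $O(R^2/n)$ by Proposition~\ref{varthetachap}.

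The main obstacle is the cross term, because $\hat\theta_n$ and the empirical Fourier coefficients are built from the same data and cannot be decoupled. Here $\max_{L\in\mathcal L}\sum_{|l|\le L}|c_l|^2\le(1-2P)^{-4}\|\hat\theta_n-\theta_0\|^2\,D_{L_n}$ with $D_{L_n}=\sum_{|l|\le L_n}(2+|l|)^2|\widehat{g^{\star l}}-g^{\star l}|^2$ satisfying $\E D_{L_n}=O(L_n^3/n)$ and $\E D_{L_n}^2=O((L_n^3/n)^2)$. I would bound the expectation of the product by Cauchy--Schwarz, $\E[\|\hat\theta_n-\theta_0\|^2D_{L_n}]\le(\E\|\hat\theta_n-\theta_0\|^4)^{1/2}(\E D_{L_n}^2)^{1/2}$, which requires a fourth-moment bound $\E\|\hat\theta_n-\theta_0\|^4=O(n^{-2})$; this is obtained along the lines of Proposition~\ref{varthetachap}, bounding $\E(n^2\|\dot S_n(\theta_0)\|^4)$ through the fourth moments of the bounded, uncorrelated $U$-statistic increments. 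The product is then $O(L_n^3/n^2)=O(n^{1/q-2})=o(1/n)$ thanks to $L_n^3\le C_{\mathcal L}n^{1/q}$ with $q>1$. Collecting the oracle, bias, and cross contributions yields the announced bound $C(1+R^2)/n$.
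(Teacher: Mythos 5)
Your proposal is correct, and its skeleton coincides with the paper's own proof: your $o_l$, $c_l$, $b_l$ are exactly the paper's $\nu_{n,1}$, $\nu_{n,2}$, $\nu_{n,3}$, the oracle part is handled by the same Talagrand-type inequality with the same $M_1$ and $H$, and the bias part is reduced to $\E\|\hat\theta_n-\theta_0\|^2=O(1/n)$ via Proposition~\ref{varthetachap}, exactly as in the paper. The genuine divergence is the cross term. The paper bounds $\E\max_L\sup_{B_L}|\nu_{n,2}|^2$ by H\"older with conjugate exponents $(p,q)$, applying Rosenthal to get $\E|\widehat{g^{\star l}}-g^{\star l}|^{2p}=O(n^{-p})$ and, crucially, exploiting compactness of $\Theta$ to write $\E\|\hat\theta_n-\theta_0\|_1^{2q}\le \mathrm{diam}(\Theta)^{2q-2}\,\E\|\hat\theta_n-\theta_0\|_1^{2}=O(n^{-1})$, so that only the second moment stated in Proposition~\ref{varthetachap} is ever invoked; this is also precisely where the hypothesis $L_n^3\le C_{\mathcal L}n^{1/q}$ earns its exponent. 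Your Cauchy--Schwarz route instead demands $\E\|\hat\theta_n-\theta_0\|^4=O(n^{-2})$, a statement the paper never proves. Note that the cheap compactness shortcut $\E\|\hat\theta_n-\theta_0\|^4\le \mathrm{diam}(\Theta)^2\,\E\|\hat\theta_n-\theta_0\|^2=O(n^{-1})$ would not suffice: it yields only $O(L_n^3 n^{-3/2})$, which is $o(1/n)$ only when $q\ge 2$, whereas the lemma allows any $q>1$; so you genuinely need the full fourth-moment bound. Your sketch of it is sound --- $\dot S_n(\theta_0)=A_n+B_n$ with $A_n$ a bounded degenerate $U$-statistic, hence $\E\|A_n\|^4\le \sup\|A_n\|^2\,\E\|A_n\|^2=O(n^{-2})$, and $B_n$ a normalized sum of i.i.d.\ bounded centered vectors, hence $\E\|B_n\|^4=O(n^{-2})$ by Rosenthal, combined with the same almost-sure inverse-Hessian control as in Proposition~\ref{varthetachap} --- but it is an extra lemma you must actually write, and it inherits the same delicacy (the random index $n_1$ beyond which the Hessian bound holds) that the paper's own proof of Proposition~\ref{varthetachap} already glosses over. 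In exchange, your argument buys two small improvements: your Talagrand variance proxy $v=O(\|g\|_\infty)=O(R)$, uniform in $L$, obtained from $\frac1{2\pi}\int g\,\bigl|\sum_l a_l e^{ilx}\bigr|^2dx\le \|g\|_\infty\sum_l|a_l|^2$ and Sobolev embedding (valid since $s\ge 1>1/2$), is sharper than the paper's $v\propto R\sqrt{2L+1}$ from two Cauchy--Schwarz steps; and your $(1+\eta)/(1+\eta^{-1})$ split leaves more slack in the constants than the paper's factor-$3$ split, recovering the same threshold $\kappa>3/(2\pi^2)$ with room to spare.
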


\begin{proof}
Denote $R^l=\frac{1}{M^l(\hat \theta)}-\frac{1}{M^l( \theta_0)}$.
First note that 
$$\nu_n(t)=\frac1{2\pi n}\sum_{k=1}^n\sum_{l\in \Z}  \overline{t_{l}} \left(\frac{e^{-ilX_k}}{M^l(\hat \theta)}-
\frac{2\pi g^{\star l}}{M^l( \theta_0)}\right)
=\nu_{n,1}(t)+\nu_{n,2}(t)+\nu_{n,3}(t)$$
where
\begin{align*}
\nu_{n,1}(t)&=\frac1{2\pi n}\sum_{k=1}^n\sum_{l\in \Z}  \overline{t_{l}} \left(\frac{e^{-ilX_k}-2\pi g^{\star l}}{M^l( \theta_0)}\right)\\
\nu_{n,2}(t)&=\frac1{2\pi n}\sum_{k=1}^n\sum_{l\in \Z}  \overline{t_{l} }\left(e^{-ilX_k}-2\pi g^{\star l}\right)R^l\\
\nu_{n,3}(t)&=\frac1{n}\sum_{k=1}^n\sum_{l\in \Z} \overline{ t_{l}} g^{\star l}R^l=\sum_{l\in \Z} \overline{ t_{l}} g^{\star l}R^l.
\end{align*}

{ 
Thus $|\nu_n|^2\leq 3|\nu_{n,1}|^2+3|\nu_{n,2}|^2+3|\nu_{n,3}|^2$, and,
if $\kappa_1=\kappa/3$,
\begin{align*}
&\E\max_L\left( \sup_{ B_L}|\nu_n|^2-\frac{\kappa}{(1-2P)^2} \frac{2L+1}{n}\right)\leq 
3\E\sum_L\left( \sup_{ B_L}|\nu_{n,1}|^2-\frac{\kappa_1}{(1-2P)^2} \frac{2L+1}{n}\right)_+\\
&+
3\E\max_L\left( \sup_{B_L}|\nu_{n,2}|^2\right)
+3\E\max_L\left( \sup_{B_L}|\nu_{n,3}|^2\right)
\end{align*}
where $a_+=\max(a,0)$ denotes the positive part of $a$.\\
}

\textit{\underline{Control of $\nu_{n,3}$}}
First note that 
$$\left|g^{\star l}R^l\right|=\left| f^{\star l}\frac{M^l( \theta_0)-M^l(\hat \theta)}{M^l( \hat\theta)}\right|	\leq \frac{|f^{\star l}|}{1-2P}
\left|M^l( \theta_0)-M^l(\hat \theta)\right|.$$
Thus, using Schwarz inequality
$$\sup_{t\in B_L} |\nu_{n,3}(t)|^2 \leq \sum_{l=-L}^L \frac{|f^{\star l}|^2}{(1-2P)^2}
\left|M^l( \theta_0)-M^l(\hat \theta)\right|^2.$$
Moreover
\begin{eqnarray*}
|M^l( \theta_0)-M^l(\hat \theta)|
&\leq &  \left|(p_0-\hat p)e^{-i\alpha_0 l}+\hat p (e^{-i\alpha_0 l}-e^{-i\hat \alpha l})
+(1-p_0-1+\hat p)e^{-i\beta_0 l}\right.
\\&&+ \left. (1-\hat p) (e^{-i\beta_0 l}-e^{-i\hat \beta l})\right|\\
&\leq  & |p_0-\hat p|+|e^{-i\alpha_0 l}-e^{-i\hat \alpha l}|
+|p_0-\hat p|+|e^{-i\beta_0 l}-e^{-i\hat \beta l}|\\
&\leq & 2 |p_0-\hat p|+|l||\alpha_0-\hat \alpha |+|l||\beta_0 -\hat \beta|\leq 2 |l| \|\theta_0-\hat \theta\|_1
\end{eqnarray*}
(note that it is also true for $l=0$ since $M^0( \theta_0)=M^0(\hat \theta)=1$).

Thus, for any $L\in\mathcal{L}$
$$\sup_{t\in B_L} |\nu_{n,3}(t)|^2\leq \sum_{l=-L_n}^{L_n} \frac{|f^{\star l}|^2}{(1-2P)^2} 4 |l|^2 \|\theta_0-\hat \theta\|_1^2
$$
{ 
Since $s\geq 1$
$$\max_{L\in\mathcal{L}}\left(\sup_{ B_L} |\nu_{n,3}|^2\right) \leq \frac4{(1-2P)^2}\sum_{l=-L_n}^{L_n}{|f^{\star l}|^2}  |l|^{2s} \|\theta_0-\hat \theta\|_1^2
\leq \frac{4R^2}{(1-2P)^2}\|\theta_0-\hat \theta\|_1^2.
$$
According to Proposition~\ref{varthetachap} and inequality $\|x\|_1^2\leq 3 \|x\|^2$,
\label{Ktheta}
there exists a constant $K>0$ such that 
$\E(n\|\hat \theta - \theta_0\|_1^2)\leq K$. Then 
$$\E\max_{L\in\mathcal{L}}\left(\sup_{ B_L} |\nu_{n,3}|^2\right)\leq \frac{C_3R^2}{n}$$
with $C_3=4K/(1-2P)^2$.

\medskip 

\textit{\underline{Control of $\nu_{n,2}$}}
Note that 
$$|R^l |\leq 
\frac{1}{(1-2P)^2}
\left|M^l( \theta_0)-M^l(\hat \theta)\right|
\leq 
\frac{2}{(1-2P)^2}
|l|\|\hat \theta-\theta_0\|_1,$$ so for $t\in B_L$,
\begin{align*}
| \nu_{n,2}(t)|^2 \leq \left( \sum_{l=-L}^{L} |\overline{t_l}(\widehat{g^{\star l}} - g^{\star l})R^l|\right)^2
 \leq \frac{4}{(1-2P)^4}\sum_{l=-L}^{L} |\widehat{g^{\star l}} - g^{\star l}|^2 l^2\|\hat \theta-\theta_0\|_1^2.
\end{align*}
Then, for any $L\in \mathcal{L}$
\begin{align*}
 \sup_{t\in B_L} |\nu_{n,2}(t)|^2
 &\leq \frac{4}{(1-2P)^2}\sum_{l=-L}^{L} |\widehat{g^{\star l}} - g^{\star l}|^2 l^2\|\hat \theta-\theta_0\|_1^2
 \end{align*}
 and
\begin{align*}
\max_{L\in \mathcal{L} }\sup_{t\in B_L} |\nu_{n,2}(t)|^2
 &\leq \frac{4}{(1-2P)^2}\sum_{l=-L_n}^{L_n} |\widehat{g^{\star l}} - g^{\star l}|^2 l^2\|\hat \theta-\theta_0\|_1^2.
 \end{align*}
Using H\"{o}lder's inequality, for any $p,q\geq 1$ such that $\frac1p+\frac1q=1$,
 \begin{align*} 
 \E\max_{L\in \mathcal{L} }\sup_{t\in B_L} |\nu_{n,2}(t)|^2
 & \leq\frac{4}{(1-2P)^2} \sum_{l=-L_n}^{L_n} l^2 \E^{1/p}(|\widehat{g^{\star l}} - g^{\star l}|^{2p})\E^{1/q}\|\hat \theta-\theta_0\|_1^{2q}
 \end{align*}
But Proposition~\ref{varthetachap} gives us 
$$\E\|\hat \theta-\theta_0\|_1^{2q}\leq (1+2\pi+2\pi)^{2q-2}\E\left(3\|\hat \theta-\theta_0\|^{2}\right)\leq 
K'(q)n^{-1}.$$
 Moreover, we can apply the Rosenthal inequality to the variables $Y_k=e^{ilX_k}-\E(e^{ilX_k})$: there exists $C(2p)>0$ such that 
 \begin{eqnarray*}
 \E \left|\sum_{k=1}^n Y_k\right|^{2p}&\leq & C(2p)\left(
 \sum_{k=1}^n \E|Y_k|^{2p}+\left(\sum_{k=1}^n \E|Y_k|^2\right)^{p}
 \right)\\
 &\leq & C(2p)\left(
 n 2^{2p}+\left(4n\right)^{p}
 \right)\leq  C'(p)n^{p}
 \end{eqnarray*}
that  provides 
 $$\E(|\widehat{g^{\star l}} - g^{\star l}|^{2p})=\E\left((2\pi)^{-2p}
\left |\frac1n\sum_{k=1}^n Y_k\right|^{2p}\right)
\leq  (2\pi)^{-2p}C'(p)n^{-p}.$$
 Thus
 \begin{align*} 
 \E\max_{L\in \mathcal{L} }\sup_{t\in B_L} |\nu_{n,2}(t)|^2
 &\leq \frac{4}{(1-2P)^2} \sum_{l=-L_n}^{L_n} l^2 
 (2\pi)^{-2}C'(p)^{1/p}n^{-1}
 K'(q)^{1/q}n^{-1/q}\\
 & \leq \frac{C''(q)}{(1-2P)^2}n^{-1-1/q}L_n^3.
 \end{align*}
 Since $L_n^3\leq C_{\mathcal L} n^{1/q}$, we obtain  
 \begin{align*} 
 \E\max_{L\in \mathcal{L} }\sup_{t\in B_L} |\nu_{n,2}(t)|^2
 & \leq  \frac{C_2}{n}
 \end{align*}
 with $C_2=C''(q)C_{\mathcal L}/(1-2P)^2.$
 }

\medskip

\textit{\underline{Control of $\nu_{n,1}$}}

To control $\nu_{n,1}$ , we need Talagrand's inequality. 

\begin{lemma}\label{InegTalagrand}
Let $X_1,\ldots,X_n$ be i.i.d. random variables, and define $\nu_n(t)=\frac{1}{n}\sum_{k=1}^n\psi_t(X_k)-\mathbb{E}[\psi_t(X_k)]$, for $t$ belonging to a countable class $\mathcal{B}$ of real-valued measurable functions. Then, for $\delta>0$, there exist three constants $c_l$, $l=1,2,3$, such that
\begin{eqnarray}\label{TalagrandIntegre}\displaystyle\mathbb{E}\left[ \left(\sup_{t\in\mathcal{B}}\left|\nu_n\left(t\right)\right|^2-c(\delta)H^2\right)_{+}\right]&\leq& c_1\left\{\frac{v}{n}\exp\left(-c_2\delta\frac{nH^2}{v}\right)\right.\\\nonumber &&\displaystyle\left.+\frac{M_1^2}{C^2(\delta)n^2}\exp\left(-c_3C(\delta)\sqrt{\delta}\frac{nH}{M_1}\right)\right\},\end{eqnarray}
\noindent with $C(\delta)=(\sqrt{1+\delta}-1)\wedge 1$, $c(\delta)=2(1+2\delta)$ and
$$\displaystyle \sup_{t\in\mathcal{B}} \|\psi_t\|_{\infty} \leq M_1\mbox{, }\mathbb{E}\left[ \sup_{t\in\mathcal{B}}\left|\nu_n(\psi_t)\right|\right]\leq H\mbox{, and }\sup_{t\in\mathcal{B}}\mbox{Var}\left(\psi_t\left(X_1\right)\right)\leq v.$$
\end{lemma}
Inequality (\ref{TalagrandIntegre}) is a classical consequence of Talagrand's inequality given in \citet{KR2005}: see for example Lemma 5 (page 812) in \cite{lacour08}. Using density arguments, we can apply it to the unit sphere of a finite dimensional linear space.

Here $\nu_{n,1}(t)=\frac{1}{n}\sum_{k=1}^n\psi_t(X_k)-\mathbb{E}[\psi_t(X_k)]$ with
$$\psi_t(X)=\frac1{2\pi }\sum_{l\in \Z}  \overline{t_{l}} \frac{e^{-ilX}}{M^l( \theta_0)}, \qquad \E(\psi_t(X))=\sum_{l\in \Z}  \overline{t_{l}} \frac{g^{\star l}}{M^l( \theta_0)}$$

Let us compute $M_1, H$ and $v$. 
\begin{itemize}
\item Using Cauchy Schwarz inequality, for $t\in B_L$,
\begin{align*}
|\psi_t(u)|^2&=\left|\frac1{2\pi }\sum_{l=-L}^L  \overline{t_{l}} \frac{e^{-ilu}}{M^l( \theta_0)}\right|^2 \leq \frac1{4\pi ^2}\sum_{l=-L}^L|t_l|^2\sum_{l=-L}^L  \left| \frac{e^{-ilu}}{M^l( \theta_0)}\right| ^2\\
&\leq \frac{1}{4\pi^2(1-2p_0)^2}(2L+1),
\end{align*}
thus $M_1=\frac{1}{2\pi(1-2p_0)}\sqrt{2L+1}.$
\item 
Using Cauchy Schwarz inequality, for $t\in B_L$,
\begin{align*}
\sup_{t\in {B}_L}\left|\frac{1}{2\pi n}\sum_{k=1}^n\sum_{l\in \Z} \overline{ t_{l}} \left(\frac{e^{-ilX_k}}{M^l(\theta_0)}-
\E\left(\frac{e^{-ilX_k}}{M^l( \theta_0)}\right)\right)\right|^2\\
\leq \sum_{l=-L}^L\left|\frac{1}{2\pi n}\sum_{k=1}^n\left(\frac{e^{-ilX_k}}{M^l(\theta_0)}-
\E\left(\frac{e^{-ilX_k}}{M^l( \theta_0)}\right)\right)\right|^2,
\end{align*}

then 
\begin{align*}
\E\left(\sup_{t\in {B}_L}\left|\nu_{n,1}(\psi_t)\right|^2\right)
 & \leq \sum_{l=-L}^L \Var \left(
\frac{1}{2\pi n}\sum_{k=1}^n\frac{e^{-ilX_k}}{M^l(\theta_0)}\right)
\leq \sum_{l=-L}^L \frac{1}{4\pi^2 n} \Var \left(
\frac{e^{-ilX_1}}{M^l(\theta_0)}\right)\\
& \leq  \frac{1}{4\pi^2 n} \sum_{l=-L}^L\E\left|
\frac{e^{-ilX_1}}{M^l(\theta_0)}\right|^2 
\leq  \frac{1}{4\pi^2(1-2p_0)^2} \frac{2L+1}{ n},
\end{align*}
thus by Jensen’s inequality $H^2=\frac{1}{4\pi^2(1-2p_0)^2} \frac{2L+1}{ n}$.
\item It remains to control the variance. If $t\in B_L$
\begin{align*}
\Var(\psi_t(X))\leq \E\left| \frac1{2\pi }\sum_{l=-L}^L \overline{ t_{l}} \frac{e^{-ilX}}{M^l( \theta_0)}\right|^2
=\frac1{4\pi^2 }\sum_{l,l'}  t_{l}\overline{t_{l'} }\frac{\E(e^{-ilX}\overline{e^{-il'X}})}{M^l( \theta_0)\overline{M^{l'}( \theta_0)}} \\
=\frac1{2\pi }\sum_{l,l'}  t_{l}\overline{t_{l'} }\frac{g^{\star(l-l')}}{M^l( \theta_0)\overline{M^{l'}( \theta_0)}} 
\end{align*}
Using twice Schwarz inequality 
\begin{align*}
\Var(\psi_t(X))&\leq \frac1{2\pi }\sqrt{\sum_{l}\left|\frac{t_{l}}{M^l( \theta_0)}\right|^2\sum_l\left|\sum_{l'}  \frac{\overline{t_{l'} }}{\overline{M^{l'}( \theta_0)}}g^{\star(l-l')}\right|^2} \\
&\leq \frac1{2\pi (1-2p_0)}\sqrt{\sum_l\left|\sum_{l'}  \frac{\overline{t_{l'} }}{\overline{M^{l'}( \theta_0)}}g^{\star(l-l')}\right|^2} \\
&\leq \frac1{2\pi (1-2p_0)}\sqrt{\sum_l\sum_{l'}  \left|\frac{\overline{t_{l'} }}{\overline{M^{l'}( \theta_0)}}\right|^2\sum_{l'}\left|g^{\star(l-l')}\right|^2} \\
&\leq \frac1{2\pi (1-2p_0)}\sqrt{\sum_l\frac{1}{|1-2p_0|^2}\sum_{j\in \Z}\left|g^{\star j}\right|^2} \\
&\leq \frac{ { R}}{2\pi (1-2p_0)^2}\sqrt{2L+1}, \\
\end{align*}
since $\sum_{j\in \Z}\left|g^{\star j}\right|^2\leq \sum_{j\in \Z}\left|f^{\star j}\right|^2 {\leq R^2}$.
Thus $v=\frac{ { R}}{2\pi (1-2p_0)^2}\sqrt{2L+1} $.
\end{itemize}

 Inequality (\ref{TalagrandIntegre})  becomes
 
 \begin{eqnarray*}
& &\displaystyle\E\left[ \left(\sup_{t\in {B}_L}\left|\nu_{n,1}\left(t\right)\right|^2-\frac{c(\delta)}{4\pi^2(1-2p_0)^2} \frac{2L+1}{ n}\right)_{+}\right]\\
&& \leq
 c_1\left\{\frac{R\sqrt{2L+1} }{2\pi (1-2p_0)^2n}\exp\left(-c_2\delta\frac{\sqrt{2L+1}}{2\pi R}\right)\right.\\
&&\qquad \displaystyle\left.+\frac{2L+1}{4\pi^2(1-2p_0)^2C^2(\delta)n^2}\exp\left(-c_3C(\delta)\sqrt{\delta n}\right)\right\}\\
&& \leq \frac{K\max(R,1)}{n}
 \left\{\sqrt{2L+1} \exp\left(-c\sqrt{2L+1}\right)+\frac{2L+1}{n}\exp\left(-c\sqrt{ n}\right)\right\}
 \end{eqnarray*}
with $K$ 
and $c$ positive constants depending on $P,c_1,c_2,c_3, \delta$.
 This ends the control of $\nu_{n,1}$ with $\kappa_1=\frac{c(\delta)}{4\pi^2}$ since 
 $$\sum_{L\in \mathcal{L}}\left\{\sqrt{2L+1} e^{-c\sqrt{2L+1}}+\frac{2L+1}{n}e^{-c\sqrt{ n}}\right\}\leq \sum_{L=0}^{\infty}\sqrt{2L+1} e^{-c\sqrt{2L+1}}+\sharp\mathcal{L} e^{-c\sqrt{ n}}=
 O(1).$$
{ 
Finally it is sufficient to take
 $$\kappa\geq 3\kappa_1=
 \frac{3(2+4\delta)}{4\pi^2}=
\frac{3}{2\pi^2}+\frac{3\delta}{\pi^2}$$
 to conclude the proof. Since  $\delta$ can be chosen arbitrary small, and we have assumed  $\kappa>3/(2\pi^2)$, this condition is satisfied.
 }

\end{proof}

\section*{Acknowledgement}

The authors would like to thank the Editors and one anonymous referee for valuable comments and suggestions leading to corrections and improvements of the article.


\bibliographystyle{apalike} 
\bibliography{biblio}       


\end{document}